\newtheorem{theorem}{Theorem}[section]
\newtheorem{proposition}[theorem]{Proposition}
\newtheorem{lemma}[theorem]{Lemma}
\newtheorem{corollary}[theorem]{Corollary}
\newtheorem{remark}[theorem]{Remark}
\newcommand{\Keywords}[1]{{\footnotesize\noindent\textit{Keywords:}
  \parbox[t]{120mm}{\raggedright\footnotesize#1}}\vspace{.5pc}}
\newcommand{\amsprimary}[1]{{\footnotesize\noindent AMS 2010 \textit{Mathematics subject
classification:} Primary #1\vspace{1pc}}}
\long\def\symbolfootnote[#1]#2{\begingroup
\def\thefootnote{\fnsymbol{footnote}}\footnote[#1]{#2}\endgroup}
\newcounter{bean}
\newenvironment{romanlist}{\begin{list}{\rm ({\roman{bean}})}
      {\usecounter{bean}\setlength{\rightmargin}{\leftmargin}}}
      {\end{list}}
\newcommand{\seqm}[3]{\ensuremath{#1\stackrel{#2}
 {\longrightarrow}#3}}
\newcommand{\seqmm}[5]{\ensuremath{#1\stackrel{#2}
 {\longrightarrow}#3\stackrel{#4}{\longrightarrow}#5}}
\newcommand{\seqmmm}[7]{\ensuremath{#1\stackrel{#2}
 {\longrightarrow}#3\stackrel{#4}{\longrightarrow}#5
  \stackrel{#6}{\longrightarrow}#7}}
\newcommand{\seqmmmm}[9]{\ensuremath{#1\stackrel{#2}
 {\longrightarrow}#3\stackrel{#4}{\longrightarrow}#5
  \stackrel{#6}{\longrightarrow}#7
  \stackrel{#8}{\longrightarrow}#9}}
\newcommand{\floor}[1]{\ensuremath{\left\lfloor #1 \right\rfloor}}
\newcommand{\ceil}[1]{\ensuremath{\left\lceil #1 \right\rceil}}
\newcommand{\paren}[1]{\ensuremath{\left\{ #1 \right\}}}
\newcommand{\bracket}[1]{\ensuremath{\left( #1 \right)}}
\newcommand{\vbracket}[1]{\ensuremath{\left\langle#1\right\rangle}}
\newcommand{\cvee}[3]{\displaystyle\bigvee^{#2}_{#1}#3}
\newcommand{\cprod}[3]{\displaystyle\prod^{#2}_{#1}#3}
\newcommand{\csum}[3]{\displaystyle\sum^{#2}_{#1}#3}
\newcommand{\cunion}[3]{\displaystyle\bigcup^{#2}_{#1}#3}
\newcommand{\qqed}{\hfill\square}
\newcommand{\zmodp}{\ensuremath{\mathbb{Z}_{p}}}
\newcommand{\zmod}[1]{\ensuremath{\mathbb{Z}_{p^{#1}}}}
\newcommand{\zmodtwo}{\ensuremath{\mathbb{Z}_{2}}}
\newcommand{\M}[2]{\ensuremath{P^{#1}(p^{#2})}}
\newcommand{\MS}[2]{\ensuremath{S^{#1}\{p^{#2}\}}}
\def\dim{\mathrm{dim}}
\begin{document}

\title{The homotopy type of a Poincar\'e Duality Complex after Looping}

\author{Piotr Beben}
\address{\scriptsize{School of Mathematics, University of Southampton,Southampton SO17 1BJ, United Kingdom}} 
\email{P.D.Beben@soton.ac.uk} 

\author{Jie Wu} 
\address{\scriptsize{Department of Mathematics, National University of Singapore,
Block S17 (SOC1),
10, Lower Kent Ridge Road,
Singapore 119076}}  
\email{matwuj@nus.edu.sg}

\begin{abstract}
We answer a weaker version of the classification problem 
for the homotopy types of $(n-2)$-connected closed orientable $(2n-1)$-manifolds.   
Let $n\geq 6$ be an even integer, and 
$X$ be a $(n-2)$-connected finite orientable Poincar\'e $(2n-1)$-complex such that
$H^{n-1}(X;\mathbb{Q})=0$ and $H^{n-1}(X;\zmodtwo)=0$.
Then its loop space homotopy type is uniquely determined by the action of 
higher Bockstein operations on $H^{n-1}(X;\zmodp)$ for each odd prime $p$.
A stronger result is obtained when localized at odd primes.

\end{abstract}

\maketitle

\symbolfootnote[0]{\amsprimary{55P35; 55P15; 57N65.}}
\symbolfootnote[0]{\Keywords{Poincar\'e duality complex, loop space, homotopy classification of manifolds.}}

\section{Introduction}

A connected space $X$ is said to satisfy Poincar\'e duality with respect to a coefficient ring $R$ if 
for some fixed nonzero class $e\in H_{n}(X;R)$ the cap product 
$$
\seqm{e\cap H^{i}(X;R)}{}{H_{n-i}(X;R)}
$$ 
pairing is an isomorphism for each $0\leq i\leq n$. These isomorphisms put additional restraints on the cohomology ring. 
If we fix $R$ to be a field, then $H^{i}(X;R)=0$ for $i>n$, and
$H^{n-i}(X;R)\cong H^{i}(X;R)$ for $0\leq i\leq n$. In particular, $H^{n}(X;R)\cong H^{0}(X;R)\cong R$,
and there are nonsingular cup product pairings
$$
\seqm{H^{i}(X;R)\otimes H^{n-i}(X;R)}{\cup}{H^{n}(X;R)\cong R},
$$
so the maps $\seqm{H^{n-i}(X;R)}{}{Hom(H^{i}(X;R),R)}$ and
$\seqm{H^{i}(X;R)}{}{Hom(H^{n-i}(X;R),R)}$ induced by the above pairing are isomorphisms. 
Every nonzero element $x\in H^{n-i}(X;R)$ then corresponds to a nonzeo element 
$y\in H^{i}(X;R)$ such that the cup product $xy$ is nonzero in $H^{n}(X;R)\cong R$.

A $CW$-complex $P$ is said to be an \emph{orientable Poincar\'e complex} 
if it satisfies Poincar\'e duality with respect to all choices of coefficient ring $R$ 
(see~\cite{Klein} for the \emph{non-orientable} definition). 
We say $P$ is \emph{finite} if it is finite as a $CW$-complex.
The \emph{dimension} of $P$ is the highest degree $n$ in which there is a
nonzero element in its $\mathbb{Z}$-cohomology, and we say $P$ is a Poincar\'e $n$-complex.

Any closed orientable $n$-manifold has the homotopy type of a finite Poincar\'e $n$-complex.
The classification of homotopy types of manifolds is then fittingly phrased in terms of classification of Poincar\'e complexes. 
The usual procedure is to first discard the local properties of manifolds, 
then use homotopy theoretic techniques to classify homotopy types of Poincar\'e complexes,
with local properties finally coming back into the picture when lifting the classification back to manifolds. 

Most work to date has involved the classification of low dimensional manifolds 
(see~\cite{Klein} for a more complete survey).  
That $1$-connected Poincar\'e $2$-complexes and Poincar\'e $3$-complexes 
have the homotopy type of a $2$-sphere and $3$-sphere respectively is an
easy consequence of Poincar\'e duality and the Hurewicz homomorphism.
Milnor~\cite{Milnor1} showed that the $\mathbb{Z}$-cohomology ring classifies $1$-connected Poincar\'e $4$-complexes,
while St{\"o}cker~\cite{Stocker} gave a list of four algebraic invariants that classify the homotopy types of 
$1$-connected orientable Poincar\'e $5$-complexes. 
Little is known beyond these dimensions. 
In the highly connected case, $(n-1)$-connected Poincar\'e $2n$-complexes
have been classified by Whitehead and Wall~\cite{Wall1}, while Sasao and Takahashi~\cite{SasTak} 
gave a partial solution for $(n-1)$-connected Poincar\'e $(2n+2)$-complexes.

The next step in the order of difficulty is to classify $(n-1)$-connected Poincar\'e $(2n+1)$-complexes. 
Though this is still generally an open problem, we can apply two homotopy theoretic simplifications that make this problem solvable.
First, we consider the classification problem after our spaces have been localized at some prime $p$. 
The motivation here is that localized spaces are much simpler from the perspective of homotopy theory,  
yet they retain much of the homotopy theoretic information of the original space
(never-the-less, it is sometimes possible to lift $p$-local results back to the category of integral spaces).
Second, we consider the classification problem after looping our spaces. 
Here one can often use the associative $H$-space structure on loop spaces 
to decompose them up to homotopy as a weak product of simpler spaces.
Spaces that are not homotopy equivalent sometimes have the same loop space homotopy decompositions,
so it is reasonable to expect that a loop space homotopy classification will be simpler. 
Since there is an adjoint isomorphism $\seqm{[\Sigma X,Y]}{\cong}{[X,\Omega Y]}$ of homotopy groups, 
a loop space homotopy classification is for many practical purposes as good as a homotopy classification of the original spaces.

Let the homology Bockstein operation
$$
\beta_r\colon \seqm{H_*(X;\zmodp)}{}{H_{*-1}(X;\zmodp)}
$$
be the composite 
\seqmm{H_*(X;\zmodp)}{\delta_r}{H_{*-1}(X;Z_{\zmod{r}})}{\rho_r}{H_{*-1}(X;\zmodp)},
where $\rho_r$ is the connecting map in the homology long exact sequence 
associated with the short exact sequence
$$
\seqmmmm{0}{}{\zmodp{r}}{}{\zmod{r+1}}{}{\zmodp}{}{0},
$$
and $\rho_r$ is induced by the reduction map \seqm{\zmodp{r}}{}{\zmodp}.
Then $\beta_r$ where $1\leq t\leq r$ detects \zmod{t} summands in the integral homology of $X$.
Taking duals one obtains the cohomology Bocksteins
$$
\beta_r\colon \seqm{H^*(X;\zmodp)}{}{H^{*+1}(X;\zmodp)}
$$
with similar properties.

We show that under a few assumptions, loop space homotopy types are uniquely determined by rational cohomology 
and the action of the Bocksteins operations $\beta_r$.

\begin{theorem}
\label{Main0}
Fix $m>2$, and let $M$ and $M'$ be $(2m-2)$-connected closed orientable $(4m-1)$-manifolds 
(or more generally finite orientable Poincar\'e complexes). 
Consider the following conditions:
\begin{itemize}
\item[(1)] $\beta_i(H^{2m-1}(M;\zmodp))\cong \beta_i(H^{2m-1}(M';\zmodp))$ for each $i>0$ and some prime $p$;
\item[(2)] $H^{2m-1}(M;\mathbb{Q})\cong H^{2m-1}(M';\mathbb{Q})$;
\item[(3)] $H^{2m-1}(M;\zmodtwo))=H^{2m-1}(M';\zmodtwo))=0$;
\item[(4)] $H^{2m-1}(M;\mathbb{Q})=H^{2m-1}(M';\mathbb{Q})=0$.
\end{itemize}
Then:
\begin{itemize}
\item[(i)] There is a homotopy equivalence localized at an odd prime $p$
$$
\Omega M_{(p)}\simeq\Omega M'_{(p)}
$$ 
if and only if conditions $(1)$ and $(2)$ hold. 
\item[(ii)] If conditions $(3)$ and $(4)$ hold, then there is a homotopy equivalence 
$$
\Omega M\simeq\Omega M'
$$ 
if and only if condition $(1)$ holds for all odd primes $p$. 
\end{itemize}~$\qqed$
\end{theorem}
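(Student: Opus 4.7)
\emph{Cell structure and bottom wedge.} First I would observe that, because $M$ is $(2m-2)$-connected of dimension $4m-1$, the subspace $M_0 := M \setminus \mathrm{int}\,D^{4m-1}$ is $(2m-2)$-connected of dimension at most $4m-2$, and Poincar\'e duality concentrates its reduced integral (co)homology in degrees $2m-1$ and $2m$. Since $m \ge 3$ gives $2m-1 \ge 5$, Hilton's theorem says $\pi_{2m-1}(\bigvee S^{2m-1}) \cong \bigoplus \mathbb{Z}$ (higher Whitehead products lie in dimension $\ge 4m-3 > 2m-1$), so after odd-prime localization each $2m$-cell attaches by a multiple of an identity map. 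Consequently $(M_0)_{(p)}$ splits for every odd prime $p$ as a wedge $W_p$ of spheres $S^{2m-1}$, $S^{2m}$ and Moore spaces $\M{2m}{r}$; the number of sphere summands is dictated by condition $(2)$, and the Moore exponents $r$ by the higher Bocksteins in condition $(1)$.

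\emph{Looping the top cell attachment.} Write $M_{(p)} = W_p \cup_f e^{4m-1}$ with $f \in \pi_{4m-2}(W_p)_{(p)}$. The main technical step---which I expect to be the primary obstacle---is to prove that $\Omega M_{(p)}$ depends only on $W_p$, not on the attaching map $f$. The target is a decomposition
$$
\Omega M_{(p)} \;\simeq\; \Omega W_p \,\times\, \Omega S^{4m-1}_{(p)},
$$
or, more finely, a Cohen--Moore--Neisendorfer style product of atomic loop factors indexed by the summands of $W_p$ together with $\Omega S^{4m-1}_{(p)}$. I would build an $H$-space retraction $\Omega M_{(p)} \to \Omega W_p$ splitting the inclusion of the bottom complex, and identify the homotopy fibre as $\Omega S^{4m-1}_{(p)}$. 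The retraction would use the Bott--Samelson description of $H_*(\Omega W_p;\zmodp)$ as the free tensor algebra on $\widetilde{H}_*(W_p;\zmodp)$ (leaving no room for Hurewicz-type obstructions in the relevant range), together with the Poincar\'e duality pairing $H^{2m-1}(M;\zmodp) \otimes H^{2m}(M;\zmodp) \to H^{4m-1}(M;\zmodp)$, which carries the only nontrivial information in $f$ and is already visible in $W_p$.

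\emph{Assembly.} Part (i) then follows: if $(1)$ and $(2)$ hold then $W_p \simeq W'_p$, and the splitting gives $\Omega M_{(p)} \simeq \Omega M'_{(p)}$; conversely, a loop equivalence transports the $\zmodp$-Bockstein spectral sequence on $H_*(\Omega M_{(p)};\zmodp)$, and reading the classes one dimension below the bottom cell via Bott--Samelson recovers the Bockstein invariants of $H^{2m-1}(M;\zmodp)$ and their rational ranks. For Part (ii), conditions $(3)$ and $(4)$ combined with Poincar\'e duality force $H^*(M;\zmodtwo)$ and $H^*(M;\mathbb{Q})$ to agree with those of $S^{4m-1}$ in all degrees, so a Hurewicz argument on the $(4m-2)$-connected finite-dimensional complex $M_{(2)}$ yields $M_{(2)} \simeq S^{4m-1}_{(2)}$, and similarly rationally. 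The arithmetic fracture square for finite-type simply connected spaces then reduces an integral loop equivalence to loop equivalences at each prime: at the prime $2$ both sides equal $\Omega S^{4m-1}_{(2)}$, and at odd primes Part (i) applies, completing Part (ii).
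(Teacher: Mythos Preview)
Your central technical claim --- the decomposition $\Omega M_{(p)}\simeq \Omega W_p\times\Omega S^{4m-1}_{(p)}$ with $W_p=(M_0)_{(p)}$ the bottom wedge --- is false, and the retraction you propose cannot exist. Theorem~\ref{P0} computes $H_*(\Omega M_{(p)};\zmodp)$ as the quotient of $H_*(\Omega W_p;\zmodp)\cong T(\bar u_1,\ldots,\bar u_{2k})$ by the two-sided ideal generated by a nonzero primitive $\chi$ in degree $4m-3$ coming from the adjoint of the attaching map. Thus the looped inclusion $\Omega W_p\to\Omega M_{(p)}$ is \emph{surjective} on mod~$p$ homology with nontrivial kernel, so it has no left homotopy inverse. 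A dimension count makes this concrete already in rank one: $H_*(\Omega\M{2m}{r};\zmodp)\cong T(u,v)$ while Corollary~\ref{C1} gives $H_*(\Omega V;\zmodp)\cong T(u)\otimes T(v)$, and these differ in degree $4m-3$.

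What the paper does instead goes in the opposite direction. One constructs a map \emph{forward} $q\colon W\to V$ onto a rank-one mod-$p$ Poincar\'e complex (Section~\ref{HighRank}, equation~(\ref{EMainMap})); the construction of $q$ uses the symmetry of the cup-product matrix established in Proposition~\ref{P1}, which in turn relies on the Bockstein structure and is exactly the ``nontrivial information in $f$'' you allude to. One then shows $\Omega q$ has a \emph{right} homotopy inverse (Corollary~\ref{C3}); the key input is Lemma~\ref{L3}, which produces a spherical class hitting $v^2$. The fibre $F$ of $q$ is identified not as a sphere but as $J\vee(J\wedge\Omega V)$ where $J=\bar W/\M{2m}{r_1}$ (Lemma~\ref{L4} and Theorem~\ref{T2}). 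The resulting decomposition
\[
\Omega W\simeq \MS{2m-1}{r_1}\times\Omega S^{4m-1}\times\Omega\bigl(J\vee(J\wedge(\MS{2m-1}{r_1}\times\Omega S^{4m-1}))\bigr)
\]
visibly depends only on $\bar W$, giving part~(i).

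For part~(ii), your appeal to arithmetic fracture is also a gap: having $\Omega M_{(p)}\simeq\Omega M'_{(p)}$ at every prime does not by itself yield an integral equivalence without controlling compatibility over $\mathbb{Q}$. The paper avoids this by constructing genuine integral maps $\rho\colon M\to N$, $\eta$, $\tilde g$ (Section~\ref{Integral}) whose $p$-localizations are the maps from Section~\ref{HighRank}, and then invoking Proposition~\ref{LPequiv} to a single integral map rather than to an abstract collection of local equivalences.
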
 

The proof is streamlined by assuming condition $(4)$, though with some extra work it can likely be dropped.

\section{mod-$p$ Poincar\'e Complexes}
\label{SModp}
If $X$ is a $1$-connected finite type $CW$-complex and $p$ a prime number, recall that 
the $p$-\emph{localization map} \seqm{X}{\ell}{X_{(p)}} induces a ring isomorphism
\seqm{H^*(X;\zmodp)}{\cong}{H^*(X_{(p)};\zmodp)}, and $X_{(p)}$ has a $p$-local $CW$-structure
(i.e. consisting of $p$-local cells, whose attaching maps are maps of $p$-localized spheres into $p$-local subcomplexes)
with $p$-local cells in one-to-one correspondance with generators of the $\zmodp$-module $H^*(X_{(p)};\zmodp)$.
Because an \emph{orientable Poincar\'e complex} $P$ satisfies Poincar\'e duality on mod-$p$ homology
for any prime $p$, then so does its $p$-localization $P_{(p)}$.  

Instead of working with the $p$-localization $P_{(p)}$ of a finite orientable Poincar\'e complex $P$,
it is more convenient to use the more general concept of a \emph{mod-$p$ Poincar\'e complex},
which is a finite $p$-local $CW$-complex  $Q$ that satisfies Poincar\'e Duality on its mod-$p$ cohomology. 
The \emph{dimension} $n$ is the highest degree in which there is a nonzero element in its mod-$p$ cohomology, 
and one says $Q$ is \emph{finite} if it has a finite number of $p$-local cells. 
The $(n-1)$-\emph{skeleton} of $Q$ in the $p$-local sense will be denoted by $\bar Q$.
Because the mod-$p$ cohomology generators of a $p$-local space are in one-to-one correspondance 
with its $p$-local cells, one can see that $Q$ is the cofiber of some map \seqm{S^{n-1}_{(p)}}{}{\bar Q}. 

Fix $n\geq 3$ and $k\geq 1$. We let $\mathcal{T}_{k,n}$ denote the set of classes of homotopy types of mod-$p$ 
Poincar\'e $(2n-1)$-complexes subject to the following conditions: the class $[W]$ is in $\mathcal{T}^{p}_{k,n}$ if and only if
\begin{itemize}
\item[(1)] $W$ is $(n-2)$-connected;
\item[(2)] $H^{n-1}(W;\zmodp)$ has rank $k$.
\end{itemize}
Fix some class $[W]\in\mathcal{T}^{p}_{k,n}$.
By mod-$p$ Poincar\'e duality and the first condition above, the 
$\zmodp$-submodule $H^{*}(\bar W;\zmodp)$ of $H^{*}(W;\zmodp)$ is described by an isomorphism
\begin{equation}
\label{EBasis} 
H^{*}(\bar W;\zmodp)\cong\zmodp\paren{x_{i}^*,y_{i}^*|1\leq i\leq k},
\end{equation}
where
$|x_{i}^*|=n-1$, $|y_{i}^*|=n$. 
We can and will choose the basis for $H^{*}(\bar W;\zmodp)$ to satisfy the following the conditions:
there is an integer $k_1$, with $0\leq k_1\leq k$, such that whenever $1\leq i\leq k_1$,
the action of the Bockstein operations on $H^{*}(\bar W;\zmodp)$ satisfy 
$$
\beta_{r_i}(x_{i}^*)=y_{i}^*
$$ 
for some choice of integer $r_{i}>0$ (depending on $i$),
and whenever $k_1<i\leq k$ we have
$$
\beta_{r}(x_{i}^*)=0
$$ 
for each $r>0$. 

Next, recall that for any group $G$, a Moore space $M(G,m)$ is unique up to homotopy type.
Thus $M(G_1\oplus G_2,m)\simeq M(G_1,m)\vee M(G_2,m)$. 
Therefore there exists a splitting of $\bar W$ as a wedge sum of 
$p$-localized Moore spaces $\M{n}{r_{i}}$ for $i\leq k_1$ corresponding to the action of $\beta_{r_i}$ on $x_i^*$, 
and $p$-localized spheres $S^{n-1}$ and $S^{n}$ corresponding to the generators 
$x_{i}^*$ and $y_{j}^*$ for $k_1<i\leq k$. 
In other words
\begin{equation}
\label{ESplit} 
\bar W\simeq_{(p)}\cvee{1\leq i\leq k_1}{}{\M{n}{r_i}}\vee\cvee{k_1< i\leq k}{}{(S^{n-1}\vee S^n)}.
\end{equation}

Fixing some generator $z^*\in H^{2n-1}(W;\zmodp)\cong\zmodp$, 
the cup product structure on $H^{*}(W;\zmodp)$ is described by a $k\times k$ \zmodp-matrix representation 
$$A_{z^*}=(a_{ij})$$
where $y_{j}^*x_{i}^*=a_{ij}z^*$.  
Set $k_2=k-k_1$. We partition $A_{z^*}$ into the block form 
\begin{equation}
\label{EMatrix}
A_{z^*}=\left(\begin{array}{cc}
B_{z^*} & D_{z^*} \\
C_{z^*} & E_{z^*}
\end{array}\right)
\end{equation} 
where 
$B_{z^*}$, $C_{z^*}$, $D_{z^*}$, $E_{z^*}$ 
are respectively matrices of dimensions
$k_1\times k_1$, $k_2\times k_1$, $k_1\times k_2$, $k_2\times k_2$. 
Since Poincar\'e Duality implies the cup product pairing as described above is nonsingular, 
the matrix $A_{z^*}$ is nonsingular.

\begin{proposition}
\label{P1}
Given a class $[W]\in\mathcal{T}^{p}_{k,n}$ and a generator $z^*\in H^{2n-1}(W;\zmodp)$ where $n\geq 3$, 
the nonsingular $A_{z^*}$ is such that $C_{z^*}$ is the zero $k_1\times k_2$ matrix,
and $B_{z^*}$ is symmetric when $n$ is even, and skew symmetric when $n$ is odd.  
\end{proposition}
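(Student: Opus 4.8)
The plan is to extract the structure of the matrix $A_{z^*}$ directly from two facts: (a) the multiplicativity of the Bockstein operations together with the fact that $\beta_{r}$ is a derivation with respect to cup product, and (b) the explicit list of Bockstein actions on the basis $\{x_i^*,y_i^*\}$ recorded in the paragraph after~(\ref{EBasis}). The key observation is that for $1\leq j\leq k_1$ we have $y_j^* = \beta_{r_j}(x_j^*)$, and for $1\leq i\leq k_1$ likewise $y_i^* = \beta_{r_i}(x_i^*)$, while for $k_1 < i \leq k$ all higher Bocksteins kill $x_i^*$. Since $\beta_{r}$ vanishes on the top class $z^* \in H^{2n-1}(W;\zmodp)$ (as $H^{2n}(W;\zmodp)=0$), applying an appropriate $\beta_r$ to each product $y_j^* x_i^*$ and using the derivation property will force linear relations among the entries $a_{ij}$.

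First I would treat the block $C_{z^*}$, i.e.\ the entries $a_{ij}$ with $1\leq j\leq k_1$ and $k_1 < i \leq k$, where $y_j^* x_i^* = a_{ij} z^*$. Write $a_{ij}z^* = y_j^* x_i^* = \beta_{r_j}(x_j^*)\,x_i^*$. By the derivation rule, $\beta_{r_j}(x_j^* x_i^*) = \beta_{r_j}(x_j^*)x_i^* \pm x_j^*\beta_{r_j}(x_i^*)$. Now $x_j^* x_i^*$ lies in $H^{2n-2}(W;\zmodp)=0$ by Poincar\'e duality (degree $2n-2 < 2n-1$ has the wrong parity / is not the top group, and more precisely the relevant cohomology group in that degree is spanned by classes dual to $H_{1}$, which vanishes since $W$ is $(n-2)$-connected with $n\geq 3$), so the left side is $0$. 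Also $\beta_{r_j}(x_i^*)=0$ since $i>k_1$ forces all higher Bocksteins to annihilate $x_i^*$ (and $r_j>0$). Hence $\beta_{r_j}(x_j^*)x_i^* = 0$, i.e.\ $a_{ij}z^* = y_j^* x_i^* = 0$, so $a_{ij}=0$. This gives $C_{z^*}=0$. The one point requiring care is the vanishing of $H^{2n-2}(W;\zmodp)$ and more generally of the products $x^*x^*$ of degree-$(n-1)$ classes landing in degree $2n-2$; this follows because $2n-2 \neq 2n-1$ and, by Poincar\'e duality applied to the $(n-2)$-connectivity of $W$, the only potentially nonzero cohomology of $W$ below the top dimension sits in degrees $n-1$ and $n$, so $H^{2n-2}(W;\zmodp)=0$ precisely when $2n-2>n$, i.e.\ $n>2$, which holds.

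Next I would establish the (skew-)symmetry of $B_{z^*}$. For $1\leq i,j\leq k_1$, graded commutativity of the cup product on $H^*(W;\zmodp)$ gives $y_j^* x_i^* = (-1)^{n(n-1)} x_i^* y_j^* = (-1)^{n(n-1)} y_j^* x_i^*$... more usefully, $y_j^* x_i^* = (-1)^{n(n-1)} x_i^* y_j^*$ and I want to relate $a_{ij}$ to $a_{ji}$. Compute $a_{ij} z^* = y_j^* x_i^* = \beta_{r_j}(x_j^*) x_i^*$; applying the derivation identity to $\beta_{r_j}(x_j^* x_i^*)$ as above but now with $j,i\leq k_1$ we get $0 = \beta_{r_j}(x_j^* x_i^*) = \beta_{r_j}(x_j^*)x_i^* + (-1)^{n-1} x_j^* \beta_{r_j}(x_i^*)$, and $\beta_{r_j}(x_i^*)$ is either $y_i^*$ (if $r_i=r_j$) or $0$. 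Combining the resulting relation with graded commutativity $x_j^* y_i^* = (-1)^{n(n-1)} y_i^* x_j^* = (-1)^{n(n-1)} a_{ji}z^*$ and $x_j^* x_i^* = (-1)^{(n-1)^2} x_i^* x_j^* $, one reads off $a_{ij} = (-1)^{(n-1)} a_{ji}$ (after bookkeeping the sign $(-1)^{n(n-1)}$, which equals $+1$ since $n(n-1)$ is even). Thus $B_{z^*}$ is symmetric when $n-1$ is even, i.e.\ $n$ odd — wait, this needs the opposite parity, so I will recheck the sign bookkeeping: the correct statement to land on is $a_{ij}=(-1)^{n}a_{ji}$, giving $B_{z^*}$ symmetric for $n$ even and skew-symmetric for $n$ odd, matching the claim. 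The main obstacle, and the step I expect to demand the most care, is exactly this sign chase: correctly combining the Koszul sign in the derivation property of $\beta_r$, the graded-commutativity signs coming from the degrees $|x_i^*|=n-1$ and $|y_j^*|=n$, and the fact that $\beta_{r_j}(x_i^*)$ may or may not equal $y_i^*$ depending on whether $r_i=r_j$. A clean way to organize it is to first reduce to the case $r_i=r_j$ (if $r_i\neq r_j$ one shows $a_{ij}=0=a_{ji}$ outright from the derivation identity since then $\beta_{r_j}(x_i^*)=0$, so the relation is trivially (skew-)symmetric), and then handle the equal-$r$ case with a single careful sign computation.
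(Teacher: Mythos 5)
Your argument is correct in outline but takes a genuinely different route from the paper. The paper never touches the cup product directly: it first proves Proposition~\ref{L1}, which identifies $\alpha'_*(\iota_{2n-3})$ with $\sum_{i,j}(-1)^{n-1}a_{ij}[\bar u_i,\bar v_j]$ in $H_{2n-3}(\Omega\bar W;\zmodp)$, and then applies $\sum_t\beta_{s_t}$ to this element, using naturality ($\beta_{s_t}\alpha'_*=\alpha'_*\beta_{s_t}=0$) together with the derivation property of $\beta_r$ for the \emph{Pontryagin} product; the relations on the $a_{ij}$ then fall out of the linear independence of the brackets $[\bar u_i,\bar u_j]$. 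Your proof stays entirely in $H^*(W;\zmodp)$, using $y_j^*=\beta_{r_j}(x_j^*)$, the vanishing of $H^{2n-2}(W;\zmodp)$ (correct: it is Poincar\'e dual to $H_1$, which vanishes by $(n-2)$-connectivity), and the derivation property of $\beta_r$ for the \emph{cup} product. This is more elementary and self-contained --- it needs none of the loop-space machinery --- whereas the paper's detour is essentially free since Proposition~\ref{L1} is needed elsewhere anyway. Your final sign $a_{ij}=(-1)^na_{ji}$, obtained from $0=y_j^*x_i^*+(-1)^{n-1}x_j^*y_i^*$ together with $x_j^*y_i^*=(-1)^{n(n-1)}y_i^*x_j^*=a_{ji}z^*$, is right, and the diagonal $a_{ii}=0$ for $n$ odd uses that $p$ is odd.

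Two points need shoring up. First, for $r>1$ the operation $\beta_r$ as defined (a composite of a connecting map and a reduction) is \emph{not} a derivation for the cup product on the nose; the honest statement is that $d_r$ is a derivation on the $E_r$-page of the Bockstein spectral sequence, so your identities a priori hold only modulo the images of earlier differentials. You should observe that because $H^{2n-2}(W;\zmodp)=H^{2n}(W;\zmodp)=0$, no differential enters or leaves total degree $2n-1$, so $E_r^{2n-1}=H^{2n-1}(W;\zmodp)$ for all $r$ and the relation among the $a_{ij}$ descends to actual cohomology. (The paper's own use of the derivation property for the Pontryagin product carries the same caveat, so this is a matter of precision rather than a defect unique to your route.) Second, when $r_i\neq r_j$ (say $r_i<r_j$) the class $x_i^*$ does not survive to the $E_{r_j}$-page, so ``$\beta_{r_j}(x_i^*)=0$'' is convention-dependent rather than automatic; you should instead apply $\beta_{r_i}$ to $x_i^*x_j^*=0$ (where $\beta_{r_i}(x_j^*)=0$ genuinely holds) to get $a_{ji}=0$, and then note that whatever value $\beta_{r_j}(x_i^*)$ takes (it is at worst a multiple of $y_i^*$), the $\beta_{r_j}$-identity combined with $a_{ji}=0$ still forces $a_{ij}=0$, so the (skew-)symmetry relation survives in this case too.
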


See Section~\ref{SStruct} for a proof.
Since a $k\times k$ skew symmetric matrix is nonsingular if and only if $k$ even, the following is immediate:  
\begin{corollary}
There exist no classes $[W]\in\mathcal{T}^{p}_{2l+1,2m+1}$ such that $H^{2m-1}(W;\mathbb{Q})=0$ or $H^{2m}(W;\mathbb{Q})=0$. 
\end{corollary}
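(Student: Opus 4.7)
The plan is to extract the corollary directly from Proposition~\ref{P1}, combining a parity constraint on a nonsingular skew-symmetric matrix with the wedge decomposition~(\ref{ESplit}) of $\bar W$. I would suppose $[W] \in \mathcal{T}^p_{2l+1,2m+1}$ exists and derive a contradiction with the hypothesized vanishing of rational cohomology in degrees around $2m$. Since $n = 2m+1$ is odd, Proposition~\ref{P1} asserts that $C_{z^*} = 0$ and that $B_{z^*}$ is skew-symmetric. The block form~(\ref{EMatrix}) is thereby upper triangular, so
$$\det A_{z^*} = \det B_{z^*} \cdot \det E_{z^*}.$$
Because mod-$p$ Poincar\'e duality forces the nonsingularity of $A_{z^*}$, the block $B_{z^*}$ is itself a nonsingular $k_1 \times k_1$ skew-symmetric matrix. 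The standard linear algebra fact that such a matrix must have even size then gives $k_1 \in 2\mathbb{Z}$.

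Since $k = 2l+1$ is odd while $k_1$ is even, $k_2 = k - k_1$ is odd and in particular $k_2 \geq 1$. I would then convert this parity observation into a statement about $H^{*}(W;\mathbb{Q})$ via~(\ref{ESplit}): each $p$-local Moore-space summand $\M{n}{r_i}$ is rationally contractible, while each of the $k_2$ summands $S^{n-1} \vee S^n$ contributes a copy of $\mathbb{Q}$ in each of the degrees $n-1$ and $n$. Thus both $H^{n-1}(\bar W;\mathbb{Q})$ and $H^n(\bar W;\mathbb{Q})$ have rank exactly $k_2$. Since $W$ is obtained from $\bar W$ by attaching a single $p$-local cell of dimension $2n-1 = 4m+1$, the rational cohomology of $W$ in degrees $n-1$ and $n$ agrees with that of $\bar W$, so both $H^{n-1}(W;\mathbb{Q})$ and $H^{n}(W;\mathbb{Q})$ have rank $k_2 \geq 1$. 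Neither group can vanish, contradicting the hypothesis.

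No serious obstacle is anticipated: the corollary is essentially a parity count packaged as a non-existence statement, and Proposition~\ref{P1} supplies all the structural input. The only care needed is to keep straight which cohomology degrees survive rationally under the wedge splitting~(\ref{ESplit}) and to observe that the top cell, being attached in a dimension well above $n$, cannot disturb the rational cohomology in the degrees of interest.
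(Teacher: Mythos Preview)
Your argument is correct and matches the paper's, which records only that a nonsingular skew-symmetric matrix must have even size and declares the corollary immediate. You spell out the block-triangular step showing $B_{z^*}$ itself is nonsingular and then translate the parity of $k_2$ into nonvanishing rational cohomology via~(\ref{ESplit}), but the core idea is identical.
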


We therefore focus on those classes $[W]\in\mathcal{T}^{p}_{k,n}$ for $n$ even. 
The above statements are proved in the next two sections. 
We see that the homotopy type of $\Omega W$ is completely classified 
by rational cohomology and the action of the Bocksteins operations as follows:
\begin{theorem}
\label{Main}
Let $[W],[W']\in\mathcal{T}^{p}_{k,2m}$ and $m>2$. 
Then $\Omega W\simeq\Omega W'$ if and only if the following are satisfied: 
\begin{itemize}
\item[(1)] $H^{2m-1}(W;\mathbb{Q})\cong H^{2m-1}(W';\mathbb{Q})$;
\item[(2)] $\beta_i(H^{2m-1}(W;\zmodp))\cong \beta_i(H^{2m-1}(W';\zmodp))$ for each $i>0$.
\end{itemize}~$\qqed$
\end{theorem}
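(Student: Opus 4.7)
The necessity direction is essentially formal. Since $W$ and $W'$ are $(2m-2)$-connected, the Hurewicz homomorphism gives $\pi_{2m-1}(W)\cong H_{2m-1}(W;\mathbb{Z})$, and a homotopy equivalence $\Omega W\simeq\Omega W'$ induces an isomorphism $\pi_{2m-1}(W)\cong\pi_{2m-1}(W')$. Since $W$ is $(2m-2)$-connected, the torsion of $H^{2m}(W;\mathbb{Z})$ coincides with the torsion of $H_{2m-1}(W;\mathbb{Z})$, and Poincar\'e duality identifies the latter (after tensoring with $\mathbb{Q}$, respectively after reducing mod $p$ and taking Bocksteins) with the data in conditions (1) and (2). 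Thus (1) and (2) are forced by $\Omega W\simeq\Omega W'$.

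For the sufficiency direction, the plan is to exhibit an intrinsic loop-space decomposition of $\Omega W$ whose factors depend only on the invariants (1) and (2). Combining Proposition \ref{P1} with the splitting (\ref{ESplit}), I first record $\bar W$ as a wedge of $k_1$ $p$-local Moore spaces $\M{2m}{r_i}$ together with $k_2=k-k_1$ sphere pairs $S^{2m-1}\vee S^{2m}$, and I summarize the mod-$p$ cup product structure via the nonsingular matrix $A_{z^*}$, whose $(1,1)$-block $B_{z^*}$ is symmetric (using $n=2m$ even) and whose $(2,1)$-block $C_{z^*}$ vanishes. Together with hypotheses (1) and (2), this data is intrinsic to the Bockstein module $H^{2m-1}(W;\zmodp)$ and the rational rank, and hence is shared by $W$ and $W'$.

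Following the roadmap, I then construct the map of equation (\ref{EMainMap}) from $W$ to a model space $Y$ built out of the Moore spaces and spheres indexed by the invariants. The central claim is that $\Omega f$ admits a right homotopy inverse. The candidate inverse is assembled on the adjoints of the generators dual to the $x_i^*$ and $y_j^*$, extended multiplicatively using Samelson brackets in the associative $H$-space $\Omega W$; the symmetry of $B_{z^*}$ and the vanishing of $C_{z^*}$ from Proposition \ref{P1} are exactly the conditions that allow these brackets to line up into a well-defined $H$-map on $\Omega Y$. Verification proceeds by comparing Hopf algebras via the mod-$p$ loop space homology calculations of Section \ref{SLHom}. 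Once $\Omega f$ has a right homotopy inverse, $\Omega W$ splits as $\Omega Y\times F$, where $F$ is the loop space of the homotopy fibre of $f$ and is itself controlled by the same invariants. The analogous splitting for $W'$ then produces the desired equivalence $\Omega W\simeq\Omega W'$.

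The principal obstacle is the construction of the right homotopy inverse to $\Omega f$. The wedge decomposition of $\bar W$ does not by itself carry the cup product information in $W$; that information is encoded in the attaching map of the top $(4m-1)$-cell, and it must be witnessed by Samelson products in $\Omega W$ before any multiplicative section can exist. The structural results of Section \ref{SStruct}, together with the nonsingularity and block form of $A_{z^*}$ from Proposition \ref{P1}, are precisely what allow these Samelson products to be placed in the correct degrees and to detect enough of $H_*(\Omega W;\zmodp)$ for the candidate section to extend to an $H$-map. The dimension hypothesis $m>2$ enters here to avoid low-dimensional degeneracies (in particular to keep Samelson-bracket identifications away from unstable dimensions). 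Once this step is carried out, matching the decompositions of $\Omega W$ and $\Omega W'$ under (1) and (2) finishes the proof.
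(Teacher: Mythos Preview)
Your overall architecture---map to a simpler space, split after looping, identify the fibre in terms of invariants---matches the paper. But two concrete mechanisms are missing, and the one you do propose does not do the job.

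First, the target of the map $q$ in (\ref{EMainMap}) is not a generic ``model space indexed by the invariants'': it is a \emph{rank-one} complex $V\in\mathcal{T}^p_{1,2m}$, constructed by collapsing all but one Moore summand of $\bar W$ (with a delicate case analysis using $C_{z^*}=0$ and the symmetry of $B_{z^*}$ to ensure the top cup product survives). The point is that for such $V$ one already knows $\Omega V\simeq S^{2m-1}\{p^{r_1}\}\times\Omega S^{4m-1}$ (Theorems~\ref{T1} and~\ref{T1b}), so the right inverse to $\Omega q$ has to be built only out of these two factors. Your ``Samelson brackets on the adjoints of all the $x_i^*,y_j^*$'' aims at the wrong target and does not produce a section of $\Omega q$. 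The $S^{2m-1}\{p^{r_1}\}$ factor is handled by the Cohen--Moore--Neisendorfer map $h$ composed with an inclusion, not by brackets; the genuinely hard factor is $\Omega S^{4m-1}$, and here one needs a map $S^{4m-2}\to\Omega W$ whose composite with $\Omega q$ hits $v^2\in H_{4m-2}(\Omega V;\mathbb{Z}_p)$. This is where the hypothesis $m>2$ and the Bocksteins really enter: the attaching map $\xi$ of the top cell of $W$ extends to a Moore space $P^{4m-1}(p^{r_1})$ by the exponent bound of Proposition~\ref{Lexp}, and the Bockstein relation $\beta_{r_1}(v')=u'$ together with $\bar\alpha'_*(u')=[u,v]$ forces $\bar\alpha'_*(v')=v^2$ (Lemmas~\ref{L2} and~\ref{L3}). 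No Samelson-bracket construction in $\Omega W$ will manufacture a class hitting the \emph{square} $v^2$; it has to come from the top cell.

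Second, ``$F$ is controlled by the same invariants'' is where the content lies, and you have not said how. In the paper one first observes that the connecting map $\Omega V\to F$ is null (because $\Omega q$ splits), then uses the principal action $\Omega V\times F\to F$ together with the spectral-sequence computation of $H_*(F;\mathbb{Z}_p)$ in Lemma~\ref{L4} to identify $F\simeq\Omega V\ltimes J\simeq J\vee(J\wedge\Omega V)$, where $J=\bar W/P^{2m}(p^{r_1})$. Only then is it visible that $\Omega W\simeq S^{2m-1}\{p^{r_1}\}\times\Omega S^{4m-1}\times\Omega F$ depends solely on $k,k_1,r_1,\ldots,r_{k_1}$, i.e.\ on $\bar W$, and hence on conditions~(1) and~(2). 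Your proposal should name the half-smash identification and the role of the nullity of $\delta$.
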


The subscript $(p)$ is suppressed until the last section of this paper, 
and we assume that all our spaces are $p$-local, or localized at $p$ where appropriate, for some fixed odd prime $p$.
Any reference to a $CW$-structure on a $p$-local space is always taken to be in the $p$-local sense.

\section{Mod-$p$ Loop Space Homology}
\label{SLHom}

Keep $p$ fixed as an odd prime number.
Let $\bar P$ be a finite type $CW$-complex, $P$ be the cofiber of some map
$$
\alpha\colon\seqm{S^{n-1}}{}{\bar P}
$$ 
for some fixed integer $n>3$, and
$$
i\colon\seqm{\bar P}{}{P}
$$
denote the inclusion. Let 
$$
\alpha'\colon\seqm{S^{n-2}}{}{\Omega\bar P}
$$
be the adjoint of $\alpha$. 
Since $i\circ\alpha'$ is null homotopic, the algebra map
$$
(\Omega i)_*\colon\seqm{H_*(\Omega\bar P;R)}{}{H_*(\Omega P;R)}
$$
factors through a map
\begin{equation}
\label{ECellAttach}
\theta\colon\seqm{H_*(\Omega \bar P;R)/I}{}{H_*(\Omega P;R)},
\end{equation}    
for any ring $R$, where $I$ is the two-sided ideal generated by the image of of $\alpha'_*$
in degree $n-2$.

Determining the conditions under which $\theta$ is a Hopf Algebra isomorphism is called
the \emph{cell attachment problem}. 
This has a long history, having been studied by Anick~\cite{Anick1}, Bubenik~\cite{Bubenik}, F\'elix and Thomas~\cite{FelixThomas}, 
and Halperin, Hess, and Lemaire~\cite{Lemaire,HalperinLemaire,HalperinLemaire2,HessLemaire}. 
Lemaire~\cite{Lemaire}, for one, found that $\theta$ is a Hopf algebra isomorphism 
whenever the morphism of graded $R$-vector spaces
$$
Tor^{\pi}_{p}\colon\seqm{Tor^{H_*(\Omega\bar P;R)}_{p}(R,R)}{}{Tor^{H_*(\Omega\bar P;R)/I}_{p}(R,R)}
$$
induced by the canonical surjection \seqm{H_*(\Omega X;R)}{\pi}{H_*(\Omega X;R)/I} is bijective,
and $R$ is a vector space of characteristic $p$. 
 
Our goal in this section is to determine conditions on the cohomology ring $H_*(\Omega P;\zmodp)$ which make $\theta$ a Hopf algebra isomorphism.  
Many highly connected mod-$p$ Poincar\'e complexes, including the ones dealt with in this paper, are covered here.
The ideal $I$ is computed in Proposition~\ref{L1}, 
and in Theorem~\ref{P0} we use a Leray-Serre spectral sequence approach to arrive at the Hopf algebra isomorphism $\theta$.

Fix any integer $m>1$ such that $n\geq m$. 
Assume our $CW$-complex $P$ is $(m-1)$-connected $n$-dimensional, 
with mod-$p$ reduced homology generated by $a_1,\ldots,a_\ell$ and $z$, where 
$$
m=|a_1|\leq |a_2|\leq\cdots\leq |a_\ell|<|z|=n.
$$
Whenever $|a_i|+|a_j|=n$, 
let the integer $c_{ij}$ be such that $a_{j}^*a_{i}^*=c_{ij}z^*$,
where $a_{i}^*$, $a_{j}^*$, $z^*$ are the cohomology duals of $a_i$, $a_{j}$ and $z$,
and in addition, we make the following assumptions: 
\begin{itemize}
\item[(1)] $\dim\bar P<\dim P$, meaning $\bar P$ is the $(n-1)$-skeleton of $P$; 
\item[(2)] $3(m-1)>n-2$ and $n$ is odd; 
\item[(3)] $\bar P\simeq \Sigma^2 X$ for some $X$.
\end{itemize}

As $\bar P$ is homotopy equivalent to a suspension,
cup products are trivial on $\bar H^*(\bar P;\zmodp)\subseteq \bar H^*(P;\zmodp)$,
implying the $c_{ij}$'s describe the cup-product structure for
$\bar H^*(P;\zmodp)$.

Consider the mod-$p$ homology Serre spectral sequences $\bar E$ and $E$ for the path fibrations of $\bar P$ and $P$,
and the morphism of spectral sequences
$$
\gamma\colon\seqm{\bar E}{}{E}
$$
induced by the inclusion \seqm{\bar P}{i}{P}.
Here we have
$$
\bar E^2_{*,*}=H_*(\bar P;\zmodp)\otimes H_*(\Omega \bar P;\zmodp),
$$
$$
E^2_{*,*}=H_*(P;\zmodp)\otimes H_*(\Omega P;\zmodp).
$$
The corresponding mod-$p$ cohomology spectral sequences 
are labelled by switching subscripts with superscripts as is standard.

Since $\bar P$ is homotopy equivalent to a suspension $\Sigma^2 X$, 
the basis elements $a_i$ of $H_*(\bar P;\zmodp)$ and $H_*(P;\zmodp)$ are transgressive.
Thus, let 
$$u_i=\tau(a_i)\in H_*(\Omega P;\zmodp),$$
$$\bar u_i=\tau(a_i)\in H_*(\Omega \bar P;\zmodp)$$
be the transgressions of the $a_i$'s.
Note that there is a Hopf algebra isomorphism
$$
H_*(\Omega\bar P;\zmodp)\cong T(\bar H_*(\Sigma X;\zmodp))\cong T(\bar u_1,\ldots,\bar u_\ell).
$$
Since $\Sigma X$ is a suspension, cup products on $\bar H_*(\Sigma X;\zmodp))$ are trivial, 
implying the algebra generators $\bar u_i$ are primitive.
The algebra map $(\Omega i)_*$ satisfies 
$(\Omega i)_*(\bar u_\ell)=u_\ell$.
Then on the second page of spectral sequences, 
$$\gamma^2(1\otimes\bar u_i)=1\otimes u_i$$
$$\gamma^2(a_i\otimes 1)=a_i\otimes 1.$$ 

Fix $m'$ to be the smallest integer such that there is a $c_{ij}$ prime to $p$ 
for some $i$ and $j$ satisfying $i\leq j$ and $|a_i|=m'$. 
If no such integer exists, set $m'=|z|=n$.
We now state some properties of the spectral sequences $\bar E$ and $E$.

\begin{proposition}
\label{L1a}
The following hold:
\begin{romanlist}
\item The kernel of the map 
$$
(\Omega i)_*\colon\seqm{H_{n-2}(\Omega\bar P;\zmodp)}{}{H_{n-2}(\Omega P;\zmodp)}
$$
is generated by $\alpha'_*(\iota_{n-2})$, 
where $\iota_{n-2}\in H_*(S^{n-2};\zmodp)\cong\zmodp$ is a generator. 

When $m'=n$, we have $\alpha'_*(\iota_{n-2})=0$.
\item $\bar d^{r}(\bar E^{r}_{i,j})=\{0\}$ for $2\leq r<i$, 
and $d^{r}(E^{r}_{i,j})=\{0\}$ for $2\leq r<i$ and $i\neq n$.  
\item $d^r(z\otimes 1)=0$ for $r<m'$, so $z\otimes 1$ survives to $E^{m'}_{n,0}$. 
The differential 
$$d^{m'}\colon \seqm{E^{m'}_{n,0}}{}{E^{m'}_{n-m',m'-1}}$$ 
satisfies
\begin{align*}
d^{m'}(z\otimes 1)=&
\begin{cases}
(-1)^{m'}\csum{|a_i|=m',|a_j|=n-m'}{}{c_{ij} (a_j\otimes u_i)},&\mbox{if }m'<n;\\
1\otimes \tau(z)\neq 0,&\mbox{if }m'=n.
\end{cases}
\end{align*}
\item The map
$$
\gamma^{r}\colon\seqm{\bar E^{r}_{i,j}}{}{E^{r}_{i,j}}
$$
is an isomorphism for
$2\leq r\leq i$, $j<n-2$, $i\neq n-m'$, and $i\neq n$.
It is also an isomorphism for $2\leq r\leq i$, $i=n-m'$ and $j<m'-1$. 
When $i=0$, it is an isomorphism for $j<n-2$ and all $r\geq 2$.
\item When $m'<n$, the map
$$
\gamma^{r}\colon\seqm{\bar E^{r}_{n-m',m'-1}}{}{E^{r}_{n-m',m'-1}}
$$
is an isomorphism for $r>n-m'$ and $2\leq r\leq m'$, 
and the projection of the element
$$
\zeta=(-1)^{m'}\csum{|a_i|=m',|a_j|=n-m'}{}{c_{ij} (a_j\otimes \bar u_i)}
$$
generates its kernel for $(m'+1)\leq r\leq n-m'$. 
\item The projection of $1\otimes(\alpha')_*(\iota_{n-2})$ to $\bar E^{r}_{0,n-2}$ generates the kernel of
$$
\gamma^r\colon\seqm{\bar E^{r}_{0,n-2}}{}{E^{r}_{0,n-2}}
$$
for $2\leq r\leq n-m'$.
\end{romanlist}
\end{proposition}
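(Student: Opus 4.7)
My plan is to prove all six claims simultaneously by careful comparison of the two path-space Serre spectral sequences via $\gamma$. The principal structural input is that $\bar P \simeq \Sigma^2 X$ yields a Hopf-algebra isomorphism $H_*(\Omega\bar P;\zmodp) \cong T(\bar u_1,\ldots,\bar u_\ell)$ (the James splitting for double suspensions), with every $\bar u_i = \tau(a_i)$ primitive. In both $\bar E$ and $E$ the base classes $a_i\otimes 1$ are transgressive at page $|a_i|$; in $E$ the only additional base class is $z\otimes 1 \in E^2_{n,0}$, whose behavior is dictated by the attaching map $\alpha$ of the top cell.

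For part~(ii) I would argue by multiplicativity: fiber classes $1\otimes u$ support no nontrivial differential because the target $E^r_{-r,q+r-1}$ is zero, the base classes $a_i\otimes 1$ have no differential before page $|a_i|$ by transgressivity, and the Leibniz rule gives $d^r(a_i\otimes u) = d^r(a_i\otimes 1)\cdot(1\otimes u)$, which vanishes for $r<|a_i|$. The same argument works in $E$ at every bidegree except $(n,0)$. For part~(iii) I would invoke Kudo's transgression theorem together with cup-product duality on $H^*(P;\zmodp)$: the identities $a_j^*a_i^* = c_{ij}z^*$ dualize into coproduct-type data on $z$ that prescribes $d^{m'}(z\otimes 1)$, and minimality of $m'$ rules out earlier nontrivial differentials on $z\otimes 1$. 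When $m' = n$ there are no nontrivial sub-top cup products, so $z\otimes 1$ survives transgressively to page $n$, and $d^n(z\otimes 1) = 1\otimes\tau(z)$ must be nonzero, else $z\otimes 1$ would survive to $E^\infty$, contradicting contractibility of the path space of $P$.

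Parts~(iv) and~(v) are then a side-by-side comparison. On $E^2$, $\gamma^2$ fails to be an isomorphism only at $(n,0)$, and by~(ii) the differentials on pages $r\leq m'$ in $\bar E$ and $E$ agree wherever $\gamma^2$ is an isomorphism; hence $\gamma^r$ is an isomorphism on those bidegrees through page $m'$. On page $m'+1$, the class $\zeta$ is a nontrivial cycle in $\bar E$ but is killed in $E$ by $d^{m'}(z\otimes 1)$, giving exactly the kernel description of~(v). The ranges in~(iv) amount to ruling out any further differential that could disturb a given bidegree; the hypothesis $3(m-1)>n-2$ forbids length-three monomials in $T(\bar u_i)$ in the relevant degrees, which keeps the required bookkeeping tractable.

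For~(i) I would first observe that $\Omega i\circ\alpha' = \Omega(i\circ\alpha)$ is null, placing $\alpha'_*(\iota_{n-2})$ in the kernel of $(\Omega i)_*$. To show it generates, I would apply a Ganea-type identification of the homotopy fiber of $i:\bar P\to P$ as $S^{n-1}*\Omega P \simeq S^n\wedge\Omega P$ through a range including $n-2$ (using $3(m-1)>n-2$), then extract the required exact sequence by looping. When $m'=n$, the element $\alpha'_*(\iota_{n-2})\in T(\bar u_i)_{n-2}$ has vanishing length-two part because length-two monomials correspond to Whitehead products in $\pi_{n-1}(\bar P)$ and hence to cup products in $H^*(P;\zmodp)$, which are trivial in this case; its length-one part vanishes because the $a_i$'s with $|a_i|=n-1$ still form a basis of $H^{n-1}(P;\zmodp)$; and higher-length monomials are excluded by $3(m-1)>n-2$. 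Part~(vi) is then immediate: the projection of $1\otimes\alpha'_*(\iota_{n-2})$ generates the kernel of $\gamma^r$ at $(0,n-2)$ for $2\leq r\leq n-m'$, after which the differential originating at $(n-m',m'-1)$ may reach and terminate it. The chief obstacle will be part~(iii), where Kudo's theorem and the sign bookkeeping must produce the exact formula; a secondary difficulty is the Ganea-range argument in~(i), where one must confirm that the connectivity bound keeps us inside the validity range of $F\simeq S^n\wedge\Omega P$ through degree $n-2$.
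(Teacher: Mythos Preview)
Your overall strategy matches the paper's: both establish (ii) and (iii) first via transgressivity and homology--cohomology duality of the path-space spectral sequence, then use these to push (iv)--(vi) through by comparing differentials page by page, and handle (i) via the homotopy fiber of $i$. Two points deserve comment.

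First, there is a genuine slip in your setup for (iv). You write that $\gamma^2$ fails to be an isomorphism only at $(n,0)$; this is false. On the fiber side, $(\Omega i)_*\colon H_j(\Omega\bar P;\zmodp)\to H_j(\Omega P;\zmodp)$ is only known to be an isomorphism for $j<n-2$ (the paper extracts this from the Serre exact sequence of the looped fibration), so $\gamma^2$ can fail at every bidegree $(i,j)$ with $j\geq n-2$, not just at $(n,0)$. The hypothesis $j<n-2$ in the statement of (iv) is there precisely for this reason, and the $i=0$ clause of (iv) is not automatic: the paper proves it by a separate induction on $r$, using that the sources $\bar E^r_{r,j-r+1}$ and $E^r_{r,j-r+1}$ of the incoming differentials are still in the range where $\gamma^r$ is already known to be an isomorphism. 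Your sketch as written does not cover this.

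Second, for the $m'=n$ case of (i) you take a different route from the paper. The paper does not argue directly that the length-two part of $\alpha'_*(\iota_{n-2})$ vanishes via a Whitehead-product/cup-product correspondence; instead it uses (iv) to show that $\gamma^r\colon\bar E^r_{0,n-2}\to E^r_{0,n-2}$ is an isomorphism for all $r\geq 2$, whence the kernel of $\gamma^2$ at that bidegree---which by the first half of (i) is generated by $1\otimes\alpha'_*(\iota_{n-2})$---must be zero. Your direct argument can be made to work, but be careful: the precise relation between the quadratic part of $\alpha'_*(\iota_{n-2})$ and the cup-product constants $c_{ij}$ is exactly the content of the paper's Proposition~\ref{L1}, whose proof \emph{uses} part (i) of the present proposition. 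If you invoke that relation here you must cite an independent source (e.g.\ the classical James--Hopf/Hilton Hurewicz computation for suspensions), or you will be arguing in a circle.
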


\begin{proof}[Proof of part $(i)$]

Notice there is the following homotopy commmutative diagram
\begin{equation}
\label{DFib}
\diagram
S^{n-1}\rto^{\alpha}\dto^{\ell} & \bar P \ddouble\rto^{i} & P\ddouble\\
F\rto^{f} & \bar P\rto^{i} & P,
\enddiagram
\end{equation}
where the top row is the cofibration sequence for the map $\alpha$,
$F$ is the homotopy fiber of the inclusion \seqm{\bar P}{i}{P},
the bottom row corresponding homotopy fibration sequence,
and $\ell$ is some lift. Since \seqm{\bar P}{i}{P} induces an
isomorphism on mod-$p$ homology in degrees less than $n$,
$F$ is at least $(n-2)$-connected. 
It is well known that fibers and cofibers agree in the stable range. 
That is, the lift $\ell$ induces an isomorphism on mod-$p$ homology in degrees less than $m+n-1$. 
Thus $\ell$ is an inclusion into the bottom sphere 
inducing in isomorphism in degree $n-1$ mod-$p$ homology, 
and the adjoint 
\seqm{S^{n-2}}{\ell'}{\Omega F} 
of $\ell$ induces an isomorphism in degree $n-2$.

By the mod-$p$ homology Serre exact sequence for the homotopy fibration
$$
\seqmm{\Omega F}{\Omega f}{\Omega\bar P}{\Omega i}{\Omega P}
$$
the image of $(\Omega f)_*$ is equal to the kernel of $(\Omega i)_*$ in degree $n-2$. 
By the left homotopy commutative square in diagram~(\ref{DFib}), 
$\alpha'$ is homotopic to
$$
\seqmm{S^{n-2}}{\ell'}{\Omega F}{\Omega f}{\Omega\bar P}.
$$ 
Since $\ell'$ induces an isomorphism in degree $n-2$,
the element $\alpha'_*(\iota_{n-2})$ must generate the kernel of
$(\Omega i)_*$ in degree $n-2$.

To see that $\alpha'_*(\iota_{n-2})=0$ whenever $m'=n$,
by part $(iv)$ the map 
$$
\gamma^{r}\colon\seqm{\bar E^{r}_{r,n-r-1}}{}{E^{r}_{r,n-r-1}}
$$
is an isomorphism for $r\geq 2$. 
Since $\bar E^{\infty}=E^{\infty}=\{0\}$, 
$\bar E^{r+1}_{0,n-2}=\frac{\bar E^{r}_{0,n-2}}{\bar d^r(\bar E^r_{r,n-r-1})}$,
and
$E^{r+1}_{0,n-2}=\frac{E^{r}_{0,n-2}}{d^r(E^r_{r,n-r-1})}$,
it follows that 
$$
\gamma^{r}\colon\seqm{\bar E^{r}_{0,n-2}}{}{E^{r}_{0,n-2}}
$$
is an isomorphism for all $r\geq 2$.
Since $\alpha'_*(\iota_{n-2})$ generates the kernel of $(\Omega i)_*$ in degree $n-2$,
$1\otimes\alpha'_*(\iota_{n-2})\in\bar E^{2}_{0,n-2}$ generates the kernel of $\gamma^2$.
But $\gamma^2$ being an isomorphism implies $1\otimes\alpha'_*(\iota_{n-2})=0$,
so $\alpha'_*(\iota_{n-2})=0$.

\end{proof}

\begin{proof}[Proof of part $(ii)$]
Since path fibrations are principal fibrations, 
differentials for the spectral sequence of $\bar E$ 
for the path fibration of $\bar P$ satisfy 
$$
\bar d^r(a\otimes b)=(1\otimes b)\bar d^r(a\otimes 1)
$$ 
for any $a,b$. 
Similarly for the spectral sequence $E$ for the path fibration of $P$.

Since the elements $a_i\otimes 1\in \bar E^2_{|a_i|,0}$ are transgressive,
$$
\bar d^r(a_i\otimes 1)=0
$$ 
for $r<|a_i|$. 
Since $\gamma^2(a_i\otimes 1)=a_i\otimes 1\in E^2_{|a_i|,0}$,
the result follows by naturality of spectral sequences.

\end{proof}

\begin{proof}[Proof of part $(iii)$]

In $E^{|a_i|}_{*,*}$ we have the differentials
$$
d_{|a_i|}(1\otimes u_i^*)=a_i^*\otimes 1, d^{|a_i|}(a_i\otimes 1)=1\otimes u_i,
$$
where $u_i^*$ and $a_i^*$ are the mod-$p$ cohomology duals. 
Since we are assuming $|a_1|\leq\cdots\leq |a_\ell|$,
we have $|a_1|=\min\{|a_1|,\ldots,|a_\ell|\}$. 
When $|a_i|+|a_j|=n$, 
$$
\begin{array}{rcl}
d_{m'}(a_j^*\otimes u_i^*)&=&(-1)^{m'}(a_j^*\otimes 1)d_{m'}(1\otimes u_i^*)\\
&=&(-1)^{m'}(a_j^* a_i^*)\otimes 1\\ 
&=&(-1)^{m'}c_{ij}z^*\otimes 1,
\end{array}
$$
and since $c_{ij}$ is divisible by $p$ whenever $|a_j|+|a_i|=n$ and $|a_i|<m'$,  
it follows that $d_{|a_j|}(a_j^*\otimes u_i^*)=0$.
Thus, for $r<m'$, the differentials 
$$
d_{r}\colon\seqm{E_{r}^{n-r,r-1}}{}{E_{r}^{n,0}}
$$ 
in the cohomology spectral sequence for the path fibration of $P$ are zero.
By duality of the spectral sequence, $d^{r}(z\otimes 1)=0$ whenever $r<m'$,  
and so we can project $z\otimes 1$ to $E^{m'}_{n,0}$.

When $m'=|z|=n$, we see that $z\otimes 1$ is transgressive, with 
$$
d^{n}(z\otimes 1)=w
$$ 
for some nonzero $w\in E^{n}_{0,n-1}$.
On the other hand, when $m'<n$, we have some integers $c'_{ij}$ such that 
$$
d^{m'}(z\otimes 1)=\csum{|a_i|=m',|a_j|=n-m'}{}{c'_{ij}(a_j\otimes u_i)}
$$
From the duality of the spectral sequence, 
$$
\begin{array}{rcl}
(-1)^{m'}c_{ij}=\vbracket{(-1)^{m'}c_{ij}z^*, z}&=&\vbracket{d_{m'}(a_j^*\otimes u_i^*), z}\\
&=&\vbracket{a_j^*\otimes u_i^*, d^{m'}(z\otimes 1)}\\
&=&\vbracket{a_j^*\otimes u_i^*,\csum{|a_s|=m',|a_t|=n-m'}{}{c'_{st} a_s\otimes u_t}}=c'_{ij}.
\end{array}
$$

\end{proof}

\begin{proof}[Proof of part $(iv)$]
Since the inclusion \seqm{\bar P}{i}{P} induces isomorphisms
$\seqm{H_i(\bar P)}{i_*}{H_i(P)}$ for $i\neq n$,
by the Serre homology exact sequences for the path fibrations of $\bar P$ and $P$, 
the map 
\seqm{\Omega\bar P}{\Omega i}{\Omega P} 
induces isomorphisms
\seqm{H_i(\Omega\bar P)}{(\Omega i)_*}{H_i(\Omega P)} for $i<n-2$.
Therefore 
\seqm{\bar E^2_{i,j}}{\gamma^2}{E^2_{i,j}} 
is an isomorphism for $i\neq n$ and $j<n-2$.

By parts $(ii)$ and $(iii)$,
elements in $E^{r}_{i,j}$ are in the image of a differential $d^r$
only when $i=0$, or when $i=n-m'$, $r=m'$, and $j\geq m'-1$. 
By part $(ii)$ differentials $d^r$ are zero on $E^{r}_{i,*}$ 
when $i\neq n$ and $2\leq r<i$.
The above also holds true for the spectral sequence 
$\bar E$ in place of $E$.
Therefore $\gamma^2$ extends to isomorphisms
\seqm{\bar E^r_{i,j}}{\gamma^r}{E^r_{i,j}} 
for $2\leq r\leq i$, $j<n-2$, $i\neq n-m'$, and $i\neq n$,
or when $2\leq r\leq i$, $i=n-m'$ and $j<m'-1$.

For the case $i=0$, suppose by induction 
\seqm{\bar E^r_{0,j}}{\gamma^r}{E^r_{0,j}}
is an isomorphism for some $r\geq 2$ and $j<n-2$.
This is true when $r=2$. As we saw above
\seqm{\bar E^r_{r,j-r+1}}{\gamma^r}{E^r_{r,j-r+1}}
is an isomorphism when $r\neq n-m'$, $r\neq n$, and $r\geq 2$.
Therefore it is an isomorphism for all choices of $(r,j-r+1)$ 
lying on the positive quadrant with $j<n-2$
(since $j<n-2$ implies $j-r+1<m'-1$ whenever $r=n-m'$, 
and $r<n$ whenever $j-r+1=0$). 
Since
$\bar E^{r+1}_{0,j}=\frac{\bar E^{r}_{0,j}}{\bar d^r(\bar E^r_{r,j-r+1})}$,
and
$E^{r+1}_{0,j}=\frac{E^{r}_{0,j}}{d^r(E^r_{r,j-r+1})}$,
it follows that 
\seqm{\bar E^{r+1}_{0,j}}{\gamma^{r+1}}{E^{r+1}_{0,j}}
is also an isomorphism, and the induction is finished.

\end{proof}

\begin{proof}[Proof of part $(v)$]

From the proof of part $(iv)$,
\seqm{\bar E^2_{n-m',m'-1}}{\gamma^2}{E^2_{n-m',m'-1}}
is an isomorphism. That $\gamma^2$ extends to isomorphisms
\seqm{\bar E^r_{n-m',m'-1}}{\gamma^r}{E^r_{n-m',m'-1}}
for $2\leq r\leq m'$ follows by parts $(ii)$ and $(iii)$.

Since $E^{m'+1}_{n-m',m'-1}=\frac{E^{m'}_{n-m',m'-1}}{d^{m'}(z\otimes 1)}$,
$\zeta$ generates the kernel of
\seqm{\bar E^{m'+1}_{n-m',m'-1}}{\gamma^{m'}}{E^{m'+1}_{n-m',m'-1}}.

We have 
$\bar E^{r}_{n-m',m'-1}\cong\bar E^{r}_{n-m',m'-1}$
and 
$E^{r}_{n-m',m'-1}\cong E^{r}_{n-m',m'-1}$
for $m'+1\leq r\leq n-m'$ following from part $(ii)$.
Hence the projection of $\zeta$ generates the kernel of
\seqm{\bar E^{r}_{n-m',m'-1}}{\gamma^{r}}{E^{r}_{n-m',m'-1}}
for $m'+1\leq r\leq n-m'$.

\end{proof}

\begin{proof}[Proof of part $(vi)$]

By part $(i)$, $(\alpha')_*(\iota_{n-2})$ generates the kernel of 
$(\Omega i)_{*}$,
so $1\otimes(\alpha')_*(\iota_{n-2})$ generates the kernel of
\seqm{\bar E^{2}_{0,n-2}}{\gamma^2}{E^{2}_{0,n-2}}.

Fix some $2\leq r<n-m'$, and suppose the projection of 
$1\otimes(\alpha')_*(\iota_{n-2})$
generates the kernel of 
\seqm{\bar E^{r}_{0,n-2}}{\gamma^{r+1}}{E^{r}_{0,n-2}}.

Recall from part $(iv)$ that  
\seqm{\bar E^{r}_{r,n-r-1}}{\gamma^r}{E^{r}_{r,n-r-1}}
is an isomorphism for $2\leq r<n-m'$. 
Then because
$\bar E^{r+1}_{0,n-2}=\frac{\bar E^{r}_{0,n-2}}{\bar d^r(\bar E^r_{r,n-r-1})}$
and
$E^{r+1}_{0,n-2}=\frac{E^{r}_{0,n-2}}{d^r(E^r_{r,n-r-1})}$,
the projection of $1\otimes(\alpha')_*(\iota_{n-2})$ 
also generates the kernel of 
\seqm{\bar E^{r+1}_{0,n-2}}{\gamma^{r+1}}{E^{r+1}_{0,n-2}},
and the result follows by induction.

\end{proof}

For elements $x$ and $y$ in a graded algebra, let $[x,y]=xy-(-1)^{|x||y|}yx$ denote the graded Lie bracket.
In the next lemma we use the following sets for indexing:
$$
\mathcal{A}_{s,k}=\{(i,j)|k<i<j\leq \ell\mbox{, }|a_i|=s\mbox{, }|a_j|=n-s\},
$$
$$
\mathcal{B}_k=\cunion{s}{}{\mathcal{A}_{s,k}}=\{(i,j)|k<i<j\leq \ell\mbox{, }|a_i|+|a_j|=n\}.
$$

\begin{proposition}
\label{L1}
Set $\eta =\ceil{\frac{n}{2}}$. 
Consider the following elements in $H_{n-2}(\Omega\bar P;\zmodp)$ for $m\leq s\leq\eta$:
$$
\kappa_s=\csum{(i,j)\in\mathcal{A}_{s,0}}{}{c_{ij}[\bar u_i,\bar u_j]},
$$
Let $m'\geq m$ be the smallest integer such that there is a $c_{ij}$ prime to $p$ for some $i\leq j$,
with $i$ satisfying $|a_i|=m'$. If no such integer exists, set $m'=|z|=n$.

Then there exist integers $b_m,b_{m+1},\ldots,b_{\eta}$, each prime to $p$, 
such that 
$$
\alpha'_*(\iota_{n-2})=\csum{s=m}{\eta}{(-1)^{s}b_s\kappa_s}.
$$
\end{proposition}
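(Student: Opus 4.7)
The plan is to exploit the fact that $\bar P\simeq\Sigma^2 X$ makes $H_*(\Omega\bar P;\zmodp)\cong T(\bar u_1,\ldots,\bar u_\ell)$ a primitively generated tensor Hopf algebra, and to identify the primitive class $\alpha'_*(\iota_{n-2})$ through the spectral-sequence comparison of Proposition~\ref{L1a}. As a first step, I would note that under the assumption $3(m-1)>n-2$ any Lie bracket of length $\geq 3$ exceeds degree $n-2$, so the degree-$(n-2)$ primitives are spanned by the length-two brackets $[\bar u_i,\bar u_j]$ with $|a_i|+|a_j|=n$ together with length-one classes $\bar u_k$ with $|a_k|=n-1$; hence $\alpha'_*(\iota_{n-2})$ is a $\zmodp$-linear combination of these.

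The central computation I plan to carry out starts from the class $\zeta=(-1)^{m'}\sum_{(i,j)\in\mathcal{A}_{m',0}}c_{ij}(a_j\otimes\bar u_i)$. By parts~(v) and~(vi) of Proposition~\ref{L1a}, $\gamma^r(\zeta)=0$ on the relevant pages and the kernel of $\gamma^{n-m'}$ on $\bar E^{n-m'}_{0,n-2}$ is the one-dimensional subspace generated by $[1\otimes\alpha'_*(\iota_{n-2})]$; by naturality this forces $\bar d^{n-m'}(\zeta)$ to be a scalar multiple of the generator. Next I would apply the Leibniz rule together with transgressivity to obtain $\bar d^{n-m'}(\zeta)=(-1)^{m'}\sum c_{ij}\,\bar u_j\bar u_i$, and then book-keep which products $\bar u_k\bar u_l$ have already been killed by the earlier differentials $\bar d^s(a_k\otimes\bar u_l)$ for $s<n-m'$. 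Combined with the identity $\bar u_j\bar u_i=\bar u_i\bar u_j-[\bar u_i,\bar u_j]$ (the sign is $+$ because $|\bar u_i|\cdot|\bar u_j|$ is even when $n$ is odd), this should reduce $\bar d^{n-m'}(\zeta)$ modulo the killed subspace to $(-1)^{m'+1}\kappa_{m'}$ plus primitive corrections drawn from the span of $\kappa_s$ for $s>m'$. I would rule out the length-one contributions $\bar u_k$ by observing that they map under $\gamma$ to the transgressions of $a_k\in H_{n-1}(P;\zmodp)$ in the $E$ spectral sequence, which survive at page $n-m'<n-1$, so such $\bar u_k$ cannot lie in the kernel.

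Unwinding these identifications gives $\alpha'_*(\iota_{n-2})=\sum_{s=m}^{\eta}(-1)^s b_s\kappa_s$ with $b_{m'}$ a unit in $\zmodp$. For $s<m'$ every relevant $c_{ij}$ is divisible by $p$, so $\kappa_s=0$ and $b_s$ can be picked as any unit. To finish, I would iterate the same kernel-comparison argument for $s>m'$, passing to the reduced quotient in which the previously determined $\kappa_{s'}$ for $s'<s$ have been absorbed, thereby forcing every $b_s$ with $\kappa_s\neq 0$ to be a unit as well. The main obstacle I expect is the careful sign bookkeeping in the Leibniz rule and transgression, and in particular verifying that the primitive part of the killed subspace at each stage decomposes cleanly as a direct sum of $\zmodp$-multiples of the remaining $\kappa_s$ with no additional relations that would force some $b_s$ to vanish modulo $p$.
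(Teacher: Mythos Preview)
Your opening moves are sound: primitivity pins $\alpha'_*(\iota_{n-2})$ down to a $\zmodp$-combination of brackets $[\bar u_i,\bar u_j]$ (the length-one terms $\bar u_k$ are indeed excluded, since in $\bar E^{n-m'}_{0,n-2}$ they remain linearly independent from the length-two monomials and $\bar d^{n-m'}(\zeta)$ has no length-one component). The kernel comparison via parts~(v) and~(vi) of Proposition~\ref{L1a} then correctly forces the $|a_i|=m'$ piece of $\alpha'_*(\iota_{n-2})$ to be $(-1)^{m'}b_{m'}\kappa_{m'}$ with $b_{m'}$ a unit. (Minor correction: with the paper's convention $\bar d^r(a\otimes b)=(1\otimes b)\bar d^r(a\otimes 1)$ one obtains $\bar d^{n-m'}(a_j\otimes\bar u_i)=1\otimes\bar u_i\bar u_j$, not $\bar u_j\bar u_i$.)

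The gap is in the iteration. Working only with $P$, the spectral-sequence comparison yields no information beyond $b_{m'}$: once $[1\otimes\alpha'_*(\iota_{n-2})]$ is recognised as $\bar d^{n-m'}(\zeta)$ up to a unit, it vanishes in $\bar E^r_{0,n-2}$ for every $r>n-m'$, so later pages cannot be used to read off the coefficients with $|a_i|>m'$. All the argument gives is that $\alpha'_*(\iota_{n-2})-(-1)^{m'}b_{m'}\kappa_{m'}$ lies in the span of \emph{all} brackets $[\bar u_i,\bar u_j]$ with $|a_i|>m'$, not merely in the span of the particular combinations $\kappa_s$. Your phrase ``passing to the reduced quotient in which the previously determined $\kappa_{s'}$ have been absorbed'' does not name a construction on which Proposition~\ref{L1a} can be reinvoked with a larger $m'$.

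The paper supplies exactly this missing ingredient: it passes to the topological quotients $P/Y_k$, where $Y_k$ is the subcomplex carrying $a_1,\dots,a_k$. The attaching map factors as $\alpha_k=q_{k-1}\circ\alpha_{k-1}$, so $(\Omega q_{k-1})_*$ sends $(\alpha'_{k-1})_*(\iota_{n-2})$ to $(\alpha'_k)_*(\iota_{n-2})$ while annihilating $\bar u_k$. Proposition~\ref{L1a} applies to each $P/Y_k$ with its own $m_k\geq m'$, and comparing the two expressions through $(\Omega q_{k-1})_*$ pins down the coefficients one cell at a time. This inductive passage to quotient spaces is the essential idea you are missing; without it the coefficients $b_s$ for $s>m'$ are not determined.
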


We will prove Proposition~\ref{L1} 
using an inductive argument on the skeleta of $P$.
We describe the setup for this before going into the proof. 

Since $|a_1|\leq\cdots\leq |a_\ell|$,
one can take the subcomplex $Y_k$ of $\bar P$,
with $H_*(Y_k;\zmodp)$ generated by $a_1,\ldots,a_k$.
Here $Y_\ell=\bar P$, and $Y_0$ is a point.
Let us consider the quotients $P/Y_k$ and $\bar P/Y_k$.
Note $P/Y_0=P$, $\bar P/Y_0=\bar P$, $P/Y_\ell=S^n$,
and $\bar P/Y_\ell=\ast$.
Abusing notation, $P/Y_k$ has reduced mod-$p$ homology generated by 
$a_{k+1},\ldots,a_\ell$, and the single degree $n$ generator $z$.
The non-trivial cup products on $\bar H^*(P/Y_k;\zmodp)$
are described by $a_{j}^*a_{i}^*=c_{ij}z^*$ whenever $|a_{j}|+|a_{i}|=n$,
and $j\geq k$. 
We fix $m_k$ to be the smallest integer so that there is a $c_{ij}$ prime to $p$ 
for some $i$ and $j$ satisfying $k\leq i\leq j$ and $|a_i|=m_k$. 
If no such integer exists, set $m_k=|z|=n$.

The $(n-1)$-skeleton $P/Y_k$ is $\bar P/Y_k$. 
Let 
$\alpha_k\colon\seqm{S^{n-1}}{}{\bar P/Y_k}$ 
be the attaching map for the single $n$-cell of $P/Y_k$, 
$$
\alpha'_k\colon\seqm{S^{n-2}}{}{\Omega\bar P/Y_k}
$$ 
be the adjoint of $\alpha_k$, and
$$
i_k\colon\seqm{\bar P/Y_k}{}{P/Y_k}
$$
the inclusion of the $(n-1)$-skeleton. 

Let $(\bar E_k)$ and $(E_k)$ be the mod-$p$ homology spectral sequence
for the path-space fibration of $\bar P/Y_k$ and $P/Y_k$ respectively,
and 
$$
\gamma\colon\seqm{(\bar E_{k})}{}{(E_{k})}
$$
be the morphism of spectral sequences induced by $i_k$.

Since $\bar P$ is a double suspension, so is $\bar P/Y_k$. 
Just as before, we have generators 
$u_{k+1},\ldots,u_\ell\in H_{*}(\Omega P/Y_{k-1};\zmodp)$, 
and $\bar u_{k+1},\ldots,\bar u_\ell \in H_{*}(\Omega(\bar P/Y_{k-1});\zmodp)$,
that are the trangressives of $a_{k+1},\ldots,a_\ell$,
with the $\bar u_i$'s being primitive.

\begin{remark}
\label{rReplace}
The spaces $P/Y_k$ satisfy the same basic properties as $P$ outlined at the beginning of the section.
Then Proposition~\ref{L1a} applies for $P/Y_k$ in place of $P$.

More precisely, in Proposition~\ref{L1a} we can take 
$P/Y_k$, $\bar P/Y_k$, $\alpha_k$, $i_k$, $(\bar E_k)$, $(E_k)$, and $m_k$
in place of $P$, $\bar P$, $\alpha$, $i$, $\bar E$, $E$, and $m'$ respectively.
The sums in parts $(ii)$ and $(iii)$ of Proposition~\ref{L1a} are taken with respect to 
the basis elements $a_{k+1},\ldots,a_\ell$ of $H_*(P/Y_k;\zmodp)$.

\end{remark}

\begin{proof}[Proof of Proposition~\ref{L1}]

The proof proceeds using induction.
At each stage we show that the proposition holds for each quotient $P/Y_k$ in place of $P$.
The induction starts with the base case $P/Y_\ell=S^n$ and ends with $P/Y_0=P$. 

Assume Proposition~\ref{L1} holds for the quotient $P/Y_{k}$, for some $1\leq k\leq \ell$. 
That is, let us assume
$(\alpha'_k)_*(\iota_{n-2})=\chi_k$, 
where we set
$$
\chi_k=\csum{s=|a_{k+1}|}{\eta}{(-1)^{s}b_s\kappa_{s,k}},
$$
and
$$
\kappa_{s,k}=\csum{(i,j)\in\mathcal{A}_{s,k}}{}{c_{ij}[\bar u_i,\bar u_j]}
$$
in $H_*(\Omega(\bar P/Y_k);\zmodp)$.
The base case $k=\ell$ and $P/Y_\ell=S^n$ is clearly true.

Since $\bar P/Y_{k-1}$ is a double suspension, 
the elements $\bar u_k,\ldots,\bar u_\ell$ in 
$$
H_{*}(\Omega(\bar P/Y_{k-1});\zmodp)\cong T(\bar u_k,\ldots,\bar u_\ell)
$$
are primitive. 
Since $3(m-1)>n-2$, $H_{n-2}(\Omega(\bar P/Y_{k-1});\zmodp)$ has no monomials of length greater than $2$,
and so the brackets $[\bar u_i,\bar u_j]$ subject to $(i,j)\in \mathcal{B}_{k-1}$
form a basis for the primitives in $H_{n-2}(\Omega(\bar P/Y_k);\zmodp)$ 
(note that $n$ is odd implies $i\neq j$). 
Because $\iota_{n-2}$ is primitive, 
$(\alpha'_{k-1})_*(\iota_{n-2})$ is a primitive element in $H_{n-2}(\Omega(\bar P/Y_{k-1});\zmodp)$, 
and so for some integers $c''_{ij}$ we can set
$$
(\alpha'_{k-1})_*(\iota_{n-2})=\csum{(i,j)\in\mathcal{B}_{k-1}}{}{c''_{ij}[\bar u_i,\bar u_j]}.
$$

Take the quotient map
$$
q_{k-1}\colon\seqm{\bar P/Y_{k-1}}{}{\bar P/Y_k}.
$$
Notice that $\alpha_k$ factors as $q_{k-1}\circ\alpha_{k-1}$, 
so $\alpha'_k$ factors as 
$$
\alpha'_k\colon\seqmm{S^{n-2}}{\alpha'_{k-1}}{\Omega(\bar P/Y_{k-1})}{\Omega q_{k-1}}{\Omega(\bar P/Y_k)}.
$$
Since the algebra map $(\Omega q_{k-1})_*$ sends $\bar u_i$ to $\bar u_i$ for $i>k$, 
and $\bar u_k$ to $0$, in $H_{n-2}(\Omega(\bar P/Y_k);\zmodp)$ we have
$$
(\alpha'_k)_*(\iota_{n-2})=\csum{(i,j)\in\mathcal{B}_k}{}{c''_{ij}[\bar u_i,\bar u_j]}.
$$
But $(\alpha'_k)_*(\iota_{n-2})=\chi_k$ by our inductive assumption,  
so by comparing coefficients
$$
c''_{ij}=(-1)^{|a_i|}b_{|a_i|}c_{ij}
$$
whenever $(i,j)\in\mathcal{B}_k$.
That is, whenever $k<i<j\leq \ell$ and $|a_i|+|a_j|=n$.
Therefore, in order to show
\begin{equation}
\label{EImage}
(\alpha'_{k-1})_*(\iota_{n-2})=\chi_{k-1}=\csum{s=|a_{k}|}{\eta}{(-1)^{s}b_s\kappa_{s,k-1}},
\end{equation}
where 
$$
\kappa_{s,k-1}=\csum{(i,j)\in\mathcal{A}_{s,k-1}}{}{c_{ij}[\bar u_i,\bar u_j]}, 
$$
we note that $\mathcal{A}_{|a_k|,k}\subseteq\mathcal{A}_{|a_k|,k-1}$ 
and $\mathcal{A}_{s,k}=\mathcal{A}_{s,k-1}$ when $s>|a_k|$,
and so we are left to show there exists an integer $b_{|a_k|}$ prime to $p$
such that $c''_{ij}=(-1)^{|a_k|}b_{|a_k|}c_{ij}$, for $i$ and $j$ satisfying $k\leq i<j$, 
$|a_i|=|a_k|$, and $|a_i|+|a_j|=n$.

Using Remark~\ref{rReplace} and part $(iii)$ of Proposition~\ref{L1a},
\begin{align}
\label{EDiff}
d^{m_k}(z\otimes 1)=&
\begin{cases}
(-1)^{m_k}\csum{|a_i|=m_k,|a_j|=n-m_k}{}{c_{ij} (a_j\otimes u_i)},&\mbox{if }m_k<n\\
1\otimes \tau(z),&\mbox{if }m_k=|z|=n
\end{cases}
\end{align}
for some nonzero $1\otimes \tau(z)\in (E_{k-1})^{m_k}_{0,n-1}$.

By part $(v)$ of Proposition~\ref{L1a}, the map
\begin{equation}
\gamma^{r}\colon\seqm{(\bar E_{k-1})^{r}_{n-m_k,m_k-1}}{}{(E_{k-1})^{r}_{n-m_k,m_k-1}}
\end{equation}
is an isomorphism for $2\leq r\leq m_k$,
and the following element
\begin{equation}
\label{EZeta}
\zeta=(-1)^{m_k}\csum{|a_i|=m_k,|a_j|=n-m_k}{}{c_{ij} (a_j\otimes \bar u_i)}
\end{equation}
in $(\bar E_{k-1})^{r}_{n-m_k,m_k-1}$,
with the sum taken with respect to the basis $a_k,\ldots,a_\ell$ of $H_*(\bar P/Y_{k-1};\zmodp)$,
generates the kernel of $\gamma^{r}$ for $m_{k}+1\leq r\leq n-m_k$
(note: $\bar d^r(\bar E_{k-1})^{r}_{n-m_k,m_k-1}=\{0\}$ for $r$ in this range, so we can project $\zeta$).


By part $(i)$ of Proposition~\ref{L1a},
$(\alpha'_{k-1})_*(\iota_{n-2})$ generates the kernel of 
$$
(\Omega i_{k-1})_{*}\colon\seqm{H_{n-2}(\Omega (\bar P/Y_{k-1});\zmodp)}{\Omega i_{k-1}}{H_{n-2}(\Omega (P/Y_{k-1});\zmodp)},
$$
and by part $(vi)$, the projection of $1\otimes (\alpha'_{k-1})_*(\iota_{n-2})$ 
generates the kernel of
\begin{equation}
\label{EKer2}
\gamma^r\colon\seqm{(\bar E_{k-1})^{r}_{0,n-2}}{}{(E_{k-1})^{r}_{0,n-2}}
\end{equation}
for $2\leq r\leq n-m_k$. In particular 
$$
\gamma^r(1\otimes (\alpha'_{k-1})_*(\iota_{n-2}))=0
$$
for $r\geq 2$.

We now return to showing there exists a $b_{|a_k|}$ prime to $p$
such that $c''_{kj}=(-1)^{|a_k|}b_{|a_k|}c_{ij}$ for $i$ and $j$ such that $k\leq i< j$, 
$|a_i|=|a_k|$, and $|a_i|+|a_j|=n$. 
The case $m_k=|z|=n$ is easy. Here we have $c_{ij}=0$ for each choice of $i,j$,
and $(\alpha'_{k-1})_*(\iota_{n-2})=0$ by Remark~\ref{rReplace} 
and part $(i)$ of Proposition~\ref{L1a}. 
Then $c''_{kj}=0$, and we can set $b_{|a_k|}=1$. 
Let us therefore focus on the case $m_k<n$. 
Fix $q=|a_k|$. By definition $m_k\geq q$,
and since $m_k<n$ and $n$ is odd, $q<n-q$. 

Let us again recall that path fibrations are principal fibrations,
and so our differentials satisfy 
$d^r(a\otimes b)=(1\otimes b)d^r(a\otimes 1)$
and 
$\bar d^r(a\otimes b)=(1\otimes b)\bar d^r(a\otimes 1)$.

Consider the following element 
\begin{equation}
\label{EZeta1}
\zeta''=\csum{|a_i|=q,|a_j|=n-q}{}{c''_{ij} (a_j\otimes \bar u_i)}
\end{equation}
in $(\bar E_{k-1})^{r}_{n-q,q-1}$ for $2\leq r\leq n-q$.
Since 
$d^{|a_i|}(a_i\otimes\bar u_j)=1\otimes\bar u_j \bar u_i$ in $(\bar E_{k-1})^{|a_i|}_{0,n-2}$,
then $1\otimes\bar u_j \bar u_i=0$ in $(\bar E_{k-1})^{r}_{0,n-2}$ when $r>|a_i|$,
and so $1\otimes[\bar u_i,\bar u_j]=1\otimes\bar u_i \bar u_j$ in $(\bar E_{k-1})^{r}_{0,n-2}$ 
under the condition that $|a_i|<|a_j|$.
Because $q=|a_k|=\min\{a_k,\ldots,a_\ell\}$, $n-q$ is the largest possible degree of an element 
$a_j\in\{a_{k+1},\ldots,a_\ell\}$ such that $|a_i|+|a_j|=n$ for some other element $a_i$.
Then $1\otimes(\alpha'_{k-1})_*(\iota_{n-2})=0$ in $(\bar E_{k-1})^{r}_{0,n-2}$ for $r>n-q$ 
since it cannot be in the image of any differential. Therefore in $(\bar E_{k-1})^{n-q}_{0,n-2}$ 
$$
\begin{array}{rcl}
1\otimes(\alpha'_{k-1})_*(\iota_{n-2})&=&\csum{|a_i|=q,|a_j|=n-q}{}{c''_{ij}(1\otimes\bar u_i\bar u_j)}\\
&=&\bar d^{n-q}(\zeta'').
\end{array}
$$

Since $q<n-q$, no nonzero element in
$(E_{k-1})^{n-q}_{n-q,q-1}$ is in the image of the differential $d^{n-q}$.
Likewise, no nonzero element $(\bar E_{k-1})^{n-q}_{n-q,q-1}$ is in the image of the differential $\bar d^{n-q}$.
Since $(E_{k-1})^{\infty}_{n-q,q-1}=\{0\}$ and $(\bar E_{k-1})^{\infty}_{n-q,q-1}=\{0\}$, the differentials
$$
\bar d^{n-q}\colon\seqm{(\bar E_{k-1})^{n-q}_{n-q,q-1}}{}{(\bar E_{k-1})^{n-q}_{0,n-2}}
$$
$$
d^{n-q}\colon\seqm{(E_{k-1})^{n-q}_{n-q,q-1}}{}{(E_{k-1})^{n-q}_{0,n-2}}
$$
must both be injections. 
Now because $\bar d^{n-q}(\zeta'')=1\otimes(\alpha'_{k-1})_*(\iota_{n-2})$ in $(\bar E_{k-1})^{n-q}_{0,n-2}$,
and $\gamma^{n-q}(1\otimes(\alpha'_{k-1})_*(\iota_{n-2}))=0$, then
$$
\gamma^{n-q}(\zeta'')=\csum{|a_i|=q,|a_j|=n-q}{}{c''_{ij}(a_j\otimes u_i)}=0.
$$

Now suppose $q=m_k$, where again we recall $q=|a_k|$.
In this case the projection of $1\otimes(\alpha'_{k-1})_*(\iota_{n-2})$ generates the kernel of 
$\gamma^r$ as in equation~(\ref{EKer2}).
Then by naturality of the spectral sequences $\zeta''$ generates the kernel of
$$
\gamma^{n-q}=\gamma^{n-m_k}\colon\seqm{\bar E^{n-m_k}_{n-m_k,m_k-1}}{}{E^{n-m_k}_{n-m_k,m_k-1}}.
$$ 
But as we saw before, the kernel of this is also generated by $\zeta$,
so we must have $\zeta''=b\zeta$ for some integer $b$ prime to $p$. 
Comparing coefficients in equations~(\ref{EZeta}) and~(\ref{EZeta1}), we set $b_{|a_k|}=b$, 
and we have $c''_{ij}=b_{|a_k|}(-1)^{m_k}c_{ij}$ for $i$ and $j$ such that $k\leq i<j$, $|a_i|=m_k$, and $|a_j|=n-m_k$.
Therefore equation~(\ref{EImage}) holds in this case.

Next suppose $q<m_k$. 
By part $(iv)$ of Proposition~\ref{L1a}
$$
\gamma^{n-q}\colon\seqm{\bar E^{n-q}_{n-q,q-1}}{}{E^{n-q}_{n-q,q-1}}
$$ 
is an isomorphism. Since $\gamma^{n-q}(\zeta'')=0$, 
we must have $c''_{ij}=0$ for each of the coefficients of $\zeta''$.
Then we can choose $b_{|a_k|}=1$ for example, and the result follows as the previous case.
This finishes the induction.

\end{proof}

\begin{lemma}
\label{LADJ}
Let $V=F\{x_1,\ldots,x_k\}$ be a vector space over a field $F$, 
and $T(V)$ the tensor algebra generated by $V$. 
For an odd integer $N$, consider a nonzero homogeneous element in $T(V)$ 
$$
\xi=\csum{|x_i|+|x_j|=N}{}{b_{ij} x_i x_j}
$$
satisfying the condition $b_{ij}\neq 0$ if and only if $b_{ji}\neq 0$,
and let $I$ be the two-sided ideal generated by $\xi$. 

Take a nonzero homogeneous element in $T(V)$
$$
u=\csum{|x_i|=M}{}{a_i x_i}
$$
for some integer $M$ and integers $a_i$.
Then for any element $w\in T(V)$, $wu\in I$ if and only if $w\in I$. 

\end{lemma}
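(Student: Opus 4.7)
\bigskip
\noindent\emph{Proof plan.} The ``only if'' direction is immediate since $I$ is a two-sided ideal. For the converse, the statement is equivalent to right multiplication by $[u]$ being injective on $A := T(V)/I$. I plan to proceed by induction on the length $n$ of $w$ in the tensor algebra, after using the length grading on $T(V)$ (compatible with $I$ since $\xi$ has length $2$) to reduce to length-homogeneous $w$. The base case $n=0$ is immediate: $w \in F$ gives $wu \in V$, and since $I$ vanishes in lengths below $2$, $wu = 0$ and hence $w = 0$ (using that $u \neq 0$ and $T(V)$ is a domain).

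For the inductive step, I will write $wu = \sum_k p_k \xi q_k$ and decompose both sides by first letter: writing $w = \sum_i x_i w_i'$ and $\xi = \sum_i x_i \ell_i$ with $\ell_i = \sum_j b_{ij} x_j$, and separating the right-hand side according to whether each $p_k$ starts with a generator or is a scalar, one obtains for each $i$ the identity
\[
w_i' u \;=\; A_i \;+\; \ell_i R,
\]
where $A_i \in I$ and $R := \sum_{k\,:\,p_k \in F} p_k q_k \in T(V)_{n-1}$ is a common residual independent of $i$. A symmetric decomposition by last letter, using $\xi = \sum_j \ell_j' x_j$ with $\ell_j' = \sum_i b_{ij} x_i$, yields analogous dual identities $a_l w = E_l + L \ell_l'$ with $E_l \in I$ and $L$ a corresponding residual independent of $l$.

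The crucial step is then to show $\ell_i R \in I$ for each $i$: once established, $w_i' u \in I$ and the induction hypothesis gives $w_i' \in I$, whence $w = \sum_i x_i w_i' \in I$. This is precisely where the both-sides condition $b_{ij} \neq 0 \iff b_{ji} \neq 0$ is essential, as it ensures that the matrices $(b_{ij})$ and $(b_{ji})$ share the same support, allowing the first- and last-letter identities above to be combined (by applying $u$ on the right to the last-letter identity and matching against the first-letter identity) to force the residuals into $I$. Without this hypothesis the lemma fails --- taking $\xi = x_a x_b$ (a monomial) with $w = x_a$ and $u = x_b$ gives $wu = \xi \in I$ yet $w \notin I$ --- so the symmetric support of $\xi$ must be invoked precisely at this step. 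The main obstacle is carrying out this combinatorial matching cleanly; I expect it to reduce to a matrix-level accounting exploiting the transposed support of $(b_{ij})$ to show that each $\ell_i R$ (equivalently, each $L \ell_l'$) lies in $I$.
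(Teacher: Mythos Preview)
Your setup is sound, but the proposal stops exactly where the real work begins: you yourself flag ``showing $\ell_i R\in I$'' as the crucial step and leave it as an expectation about ``matrix-level accounting''. I do not see how the combination you describe closes: multiplying the last-letter identity on the right by $u$ gives $L\ell_l' u\in I$ for $|x_l|=M$, which is information about the \emph{other} residual $L$, not about $R$; there is no evident mechanism tying $R$ to $L$, and the supports of $(b_{ij})$ and $(b_{ji})$ alone do not produce one. So as written there is a genuine gap.

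The paper's argument is shorter and avoids this obstacle entirely by two changes of viewpoint. First, the induction is run \emph{simultaneously over all nonzero homogeneous linear forms} $u$, not just the fixed one; this is essential, because the contradiction at the end is obtained by applying the hypothesis to the pair $(v,y_j)$ with $y_j=\sum_i b_{ij}x_i$ in place of $u$. Second, only the last-letter decomposition is used: writing $wu=\sum_j v_jx_j+v\xi$ with each $v_j\in I$ and matching the $x_j$-coefficients gives $v_j+vy_j=a_jw$ when $|x_j|=M$ and $v_j+vy_j=0$ otherwise. From any $j$ with $a_j\neq 0$ one reads off $v\notin I$. The symmetric-support hypothesis and oddness of $N$ are then invoked not to match two decompositions, but merely to \emph{exhibit an index} $j$ with $|x_j|\neq M$ and $y_j\neq 0$ (since some $b_{ij}\neq 0$ forces $b_{ji}\neq 0$, and $|x_i|+|x_j|=N$ odd prevents $|x_i|=|x_j|=M$). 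For that $j$ one has $vy_j=-v_j\in I$, contradicting the inductive hypothesis applied to $v$ (length $\ell$) and the linear form $y_j$. Your first-letter machinery and the putative interplay with the last-letter identity are never needed.
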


\begin{proof}
We will say an element $w\in T(V)$ has \emph{length} $l$ if it 
is a linear combination of monomials in $T(V)$ of length at most $l$.
This gives a filtration of $T(V)$ by length.

Since $w\in I$ implies $wu\in I$, it remains to show that
$w\nin I$ implies $wu\nin I$. 
The proof is by induction on length of elements in $T(V)$. 
Assume $w\nin I$ implies $wu\nin I$ for all $w\in T(V)$ of length $l$. 
The base case $l=0$ is clearly true since $u$ is not in $I$.
The case $l=1$ is also obviously true since $\xi$ must have summands 
$b_{ij}x_i x_j$ with $b_{ij}\neq 0$ and $|x_j|\neq M$,
which $wu$ cannot have 
(this follows from the fact that $\xi\neq 0$, that $N$ is odd, 
and the property $b_{ij}\neq 0$ if and only if $b_{ji}\neq 0$).  

Consider a nonzero $w\in T(V)$ of length $l+1$ such that $w\nin I$.
Let us assume $wu\in I$. Using the inductive assumption we will
show this leads to a contradiction.
 
Note $wu$ has length $l+2$ and and $\xi$ has length $2$. 
Since $wu\in I$, we can write  
$$
wu=\csum{j}{}{v_j x_j}+v\xi,
$$ 
with $v$ of length $l$, and each $v_j$ is some (possibly zero) element in $I$ length $l+1$.

Observe $v\neq 0$, for otherwise $v\xi=0$, 
and so we would have $v_j=0$ whenever $|x_j|\neq M$,
and $v_j=a_j w$ whenever $|x_j|=M$, which would imply $w\in I$.
Expanding $wu$ and $v\xi$, and comparing like terms,
\begin{equation*}
v_j x_j+\csum{i}{}{v(b_{ij} x_i x_j)} = 
\begin{cases}
w (a_j x_j)&\mbox{ if }|x_j|=M;\\
0&\mbox{ if }|x_j|\neq M. 
\end{cases}
\end{equation*} 
Thus
\begin{equation*}
v_j+v y_j = 
\begin{cases}
a_j w&\mbox{ if }|x_j|=M;\\
0&\mbox{ if }|x_j|\neq M, 
\end{cases}
\end{equation*} 
where $y_j=\csum{i}{}{b_{ij}x_i}$. 

Take $j$ so that $|x_j|=M$ and $a_j\neq 0$.
Since $v_j+v y_j=a_j w$, $v_j\in I$, and $w\nin I$, 
it follows that $v y_j\nin I$.
Therefore $v\nin I$. 

Since $\xi\neq 0$, there exist $i$ and $j$ such that $b_{ij}x_i x_j\neq 0$,
implying $b_{ij}\neq 0$, and in turn $b_{ji}\neq 0$.
If it happens that $|x_j|=M$, we can instead consider the summand $b_{ji}x_j x_i\neq 0$, 
where we have $|x_i|\neq M$ since $|x_i|+|x_j|=N$ and $N$ is odd.

In any case, we can choose $i$ and $j$ such that $b_{ij}\neq 0$ and $|x_j|\neq M$.
Then $y_j\neq 0$, and since $v\neq 0$, $v y_j\neq 0$.  
Now because $v\nin I$ and $v$ is of length $l$, by our inductive assumption $v y_j\nin I$.
But $v_j+v y_j=0$ and $v_j\in I$, so $v y_j=-v_j$ is a nonzero element in $I$, a contradiction.
Therefore $wu\nin I$, which finishes our induction.
 
\end{proof}

\begin{theorem}
\label{P0}
Let $P$ be as in the introduction to this section. Assume the following condition holds true:
\begin{itemize}
\item[($\ast$)] there exist elements $a,b\in H^*(P;\zmodp)$ such that $0<|a|<|b|<n$, $|a|+|b|=n$, and the cup product
$ab\in H^n(P;\zmodp)$ is nonzero.
\end{itemize}
Then there is a Hopf algebra isomorphism 
$$
H_*(\Omega P;\zmodp)\cong T(\bar u_1,\ldots, \bar u_\ell)/I,
$$ 
where $I$ is the two-sided ideal of $H_*(\Omega \bar P;\zmodp)\cong T(\bar u_1,\ldots, \bar u_\ell)$
generated by the degree $n-2$ element 
$$
\chi=\csum{m\leq s\leq\ell}{}{b_s\kappa_s}
$$
as described in Proposition~\ref{L1}. 
Moreover, the looped inclusion \seqm{\Omega\bar P}{\Omega i}{\Omega P} induces
a map on mod-$p$ homology modelled by the canonical map 
\seqm{T(\bar u_1,\ldots, \bar u_\ell)}{}{T(\bar u_1,\ldots, \bar u_\ell)/I}.
\end{theorem}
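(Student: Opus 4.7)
The plan is to invoke Lemaire's cell-attachment criterion recalled before equation~(\ref{ECellAttach}): the factor map $\theta$ of~(\ref{ECellAttach}) is a Hopf algebra isomorphism once the appropriate $\Tor$-comparison map is bijective. Two things are needed, namely to identify the ideal $I$ and to verify the $\Tor$ condition. For the first, by Proposition~\ref{L1a}(i) the kernel of $(\Omega i)_{*}$ in degree $n-2$ is the $\zmodp$-line spanned by $\alpha'_{*}(\iota_{n-2})$, and by Proposition~\ref{L1} this element equals $\csum{s=m}{\eta}{(-1)^{s}b_{s}\kappa_{s}}$ with each $b_{s}$ a unit in $\zmodp$. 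Up to rescaling of summands by units, this is exactly $\chi$, so the two-sided ideal $I$ is $(\chi)$. Hypothesis~$(\ast)$ supplies a pair $(i,j)$ with $0<|a_{i}|<|a_{j}|<n$, $|a_{i}|+|a_{j}|=n$ and $c_{ij}\neq 0$, forcing $\kappa_{|a_{i}|}\neq 0$ and hence $\chi\neq 0$.

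For the $\Tor$ condition I set $V=\zmodp\{\bar u_{1},\ldots,\bar u_{\ell}\}$ and $A=T(V)/I$, and I construct an explicit length-two free resolution of $\zmodp$ over $A$:
$$
0 \longrightarrow A\otimes\zmodp\sprod{\chi} \stackrel{\partial_{2}}{\longrightarrow} A\otimes V \stackrel{\partial_{1}}{\longrightarrow} A \longrightarrow \zmodp \longrightarrow 0,
$$
where $\partial_{1}$ is multiplication and $\partial_{2}$ is the ``right derivative'' of $\chi$. Expanding each bracket $[\bar u_{i},\bar u_{j}]$ in $\chi$ and collecting on the right yields a unique expression $\chi = \csum{j}{}{y_{j}\bar u_{j}}$ with each $y_{j}\in T(V)$, and one sets $\partial_{2}(w\otimes\chi) = \csum{j}{}{\bar w\bar y_{j}\otimes\bar u_{j}}$. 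The heart of the argument is the injectivity of $\partial_{2}$: for every $w\notin I$ there must exist some index $j$ with $wy_{j}\notin I$. This is exactly Lemma~\ref{LADJ} applied with $\xi=\chi$, $N=n-2$, and $u=y_{j}$ for any $j$ with $y_{j}\neq 0$. The oddness of $N$ follows from our standing assumption that $n$ is odd, and the symmetry hypothesis $b_{ij}\neq 0 \Leftrightarrow b_{ji}\neq 0$ is automatic because each summand $c_{ij}[\bar u_{i},\bar u_{j}]$ of $\chi$ contributes to $\bar u_{i}\bar u_{j}$ and $\bar u_{j}\bar u_{i}$ simultaneously.

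Granted injectivity of $\partial_{2}$, this resolution computes $\Tor^{A}(\zmodp,\zmodp) \cong \zmodp\oplus V\oplus\zmodp\sprod{\chi}$, whereas the standard free-algebra resolution gives $\Tor^{T(V)}(\zmodp,\zmodp) \cong \zmodp \oplus V$. The comparison map induced by the surjection $\pi$ is the identity on the common summand $\zmodp\oplus V$, verifying Lemaire's criterion in the required range. Since $\chi$ is primitive in $T(V)$ (being a sum of graded commutators of primitive generators), $I$ is a Hopf ideal and $A$ inherits a Hopf algebra structure for which $\theta$ is a Hopf algebra map; hence $\theta$ is a Hopf algebra isomorphism, and the factorisation $(\Omega i)_{*} = \theta \circ \pi$ identifies the looped inclusion on mod-$p$ homology with the canonical projection, as asserted. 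The principal obstacle is the injectivity of $\partial_{2}$; its verification is precisely what Lemma~\ref{LADJ} is designed to provide, and the lemma's own inductive proof on monomial length, exploiting the odd-degree symmetry of $\chi$, is the substantive ingredient.
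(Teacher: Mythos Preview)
Your route is genuinely different from the paper's. The paper does not invoke Lemaire in the proof; instead it builds a \emph{formal} spectral sequence $\hat E$ with $\hat E^{m}_{*,*}=\zmodp\{1,a_{1},\ldots,a_{\ell},z\}\otimes A$ and differentials mimicking those of the path-fibration spectral sequence of $P$, verifies by a lengthy case analysis that $\hat E^{\infty}_{*,*}=\{0\}$ away from bidegree $(0,0)$, and then compares $\hat E$ with the actual spectral sequence $E$ to conclude that $\theta$ is an isomorphism degree by degree. Both arguments hinge on Lemma~\ref{LADJ} for the same purpose: the paper uses it to show $\hat d^{m'}(z\otimes w)\neq 0$ whenever $\bar w\neq 0$ in $A$, which is exactly your injectivity of $\partial_{2}$. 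Your approach, if it goes through, is considerably shorter and more conceptual; the paper's is self-contained and avoids appealing to an external black-box criterion.

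There is, however, a gap in how you invoke Lemaire. The sufficient condition recorded before equation~(\ref{ECellAttach}) asks that $\Tor^{\pi}_{p}\colon \Tor^{T(V)}_{p}(\zmodp,\zmodp)\to \Tor^{A}_{p}(\zmodp,\zmodp)$ be \emph{bijective}, but by your own computation $\Tor^{A}_{2}=\zmodp\sprod{\chi}$ while $\Tor^{T(V)}_{2}=0$, so bijectivity fails in homological degree~$2$. The phrase ``in the required range'' does not help: agreement on $\zmodp\oplus V$ is automatic for any quotient by an ideal concentrated in degrees $\geq 2$ and carries no content. What you have in fact established is that $\chi$ is \emph{inert} (strongly free) in $T(V)$, equivalently that $A$ admits a length-two free resolution. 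That does force $\theta$ to be an isomorphism, but via Anick's characterisation of strongly free elements or the Halperin--Lemaire inert-attachment theorem, not via the condition quoted here. You should cite one of those results explicitly, or else close the gap directly by showing that exactness of your complex yields $H_{*}\bigl(T(V\oplus\zmodp\bar z),\,d\bar z=\chi\bigr)\cong A$, which is the Adams--Hilton computation of $H_{*}(\Omega P;\zmodp)$.
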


\begin{proof}
To avoid confusing notation, 
we will write monomials in $T(\bar u_1,\ldots, \bar u_\ell)$ without the tensor product symbol.
By Proposition~\ref{L1}, 
the element $\chi\in H_*(\Omega\bar P;\zmodp)\cong T(\bar u_1,\ldots, \bar u_\ell)$ is in the image of the map
$$
(\Omega\alpha')_*\colon\seqm{H_{n-2}(S^{n-2};\zmodp)}{}{H_{n-2}(\Omega\bar P;\zmodp)}
$$
induced by the adjoint $\alpha'$ of the attaching map $\alpha$. Thus $\chi$ is a primitive element, 
and $(\Omega i)_*(\chi)=0$ in $H_*(\Omega P;\zmodp)$, where $i$ is the inclusion \seqm{\bar P}{i}{P}.  

Let $A$ be the quotient algebra of the tensor algebra 
$T(\bar u_1,\ldots, \bar u_\ell)$ modulo the two-sided ideal generated by the element $\chi$. 
Then $A$ is a Hopf algebra because $\chi$ is primitive. Since $(\Omega i)_*(\chi)=0$ in $H_*(\Omega P;\zmodp)$, 
the Hopf algebra map $\hat\theta=(\Omega i)$ factors through Hopf algebra maps
\begin{equation}
\label{DFactor}
\diagram
T(\bar u_1,\ldots, \bar u_\ell)\dto^{\hat\theta}\rto^{} & A\dlto^{\theta}\\
H_*(\Omega P;\zmodp), 
\enddiagram
\end{equation}
where the Hopf algebra map $\theta$ is defined by $\theta(\bar u_i)=u_i$. 

We let $m'$ be the smallest integer $m\leq m'\leq \floor{\frac{n}{2}}$ 
such that there is a $c_{ij}$ prime to $p$ for some $i\leq j$,
with $i$ satisfying $|a_i|=m'$. 
By condition ($\ast$) such an integer $m'$ exists, and $m'<n-m'$. 

Consider differential bigraded left $A$-modules
$$
\hat E^{2}_{*,*}=\cdots=\hat E^{m}_{*,*}=\zmodp\{1,a_1,\ldots,a_\ell,z\}\otimes A,
$$
the element
$$
\zeta=(-1)^{m'}\csum{|a_i|=m',|a_j|=n-m'}{}{c_{ij} (a_j\otimes \bar u_i)},
$$ 
with formal differentials $\hat d^{r}$ of bidegree $(-r,r-1)$ for $r\leq m$ given as follows. 
First define the morphism of left $T(\bar u_1,\ldots,\bar u_\ell)$-modules 
$$
\bar d^{r}\colon\seqm{\zmodp\{1,a_1,\ldots,a_\ell,z\}\otimes T(\bar u_1,\ldots,\bar u_\ell)}
{}{\zmodp\{1,a_1,\ldots,a_\ell,z\}\otimes T(\bar u_1,\ldots,\bar u_\ell)}
$$
by $\bar d^r=0$ when $r<m$, 
and respecting the left action of $T(\bar u_1,\ldots,\bar u_\ell)$ by assigning 
$$\bar d^m(x\otimes y)=(1\otimes y)\bar d^m(x\otimes 1),$$ 
where
$\bar d^{m}(1\otimes y)=0$;
$\bar d^{m}(a_i\otimes 1)=1\otimes \bar u_i$ whenever $|a_i|=m$, 
otherwise $\bar d^{m}(a_i\otimes 1)=0$;
and $\bar d^{m}(z\otimes 1)=\zeta$ when $m=m'$,
otherwise $\bar d^{m}(z\otimes 1)=0$.

Since $A$ is the quotient of $T(\bar u_1,\ldots,\bar u_\ell)$ subject to the relation $\chi\sim 0$, 
for $r\leq m$ the differential $\bar d^{r}$ induces a morphism $\hat d^{r}$ of $A$-modules
$$
\hat d^{m}\colon\seqm{\zmodp\{1,a_1,\ldots,a_\ell,z\}\otimes A}{}{\zmodp\{1,a_1,\ldots,a_\ell,z\}\otimes A}
$$ 
respecting the left action of $A$.

Next we define inductively for $r\geq m$
$$
\hat E^{r+1}_{*,*}=\frac{\ker \bracket{d^{r}\colon\seqm{E^{r}_{*,*}}{}{E^{r}_{*-r,*+r-1}}}}
{\mbox{Im } \bracket{d^{r}\colon\seqm{E^{r}_{*+r,*-r+1}}{}{E^{r}_{*,*}}}}.
$$
As before, we have formal differentials given as morphisms of left $A$-modules
$$
\hat d^{r+1}\colon\seqm{\hat E^{r+1}_{*,*}}{}{E^{r+1}_{*-(r+1),*+r}}
$$
respecting the left action of $A$,
and such that: $\hat d^{r+1}(1\otimes y)=0$; 
$\hat d^{r+1}(a_i\otimes 1)=1\otimes \bar u_i$ whenever $|a_i|=r+1$, 
otherwise $\hat d^{r+1}(a_i\otimes 1)=0$; 
and $\hat d^{r+1}(z\otimes 1)=\zeta$ if $r+1=m'$, otherwise $\hat d^{r+1}(z\otimes 1)=0$.

This gives a formal spectral sequence $\hat E=\{\hat E^r,\hat d^r\}$. 
We will need to verify that $\hat E^{\infty}_{*,*}=\{0\}$ for $(*,*)\not=(0,0)$, 
but let us assume that this is the case for now. 
We shall show by induction that the restiction $\theta\colon A_k\to H_k(\Omega P;\zmodp)$ 
of the Hopf algebra map $\theta$ is an isomorphism for each $k$. 

Let $E$ be mod-$p$ homology spectral sequence for the path fibration of $P$.
The morphism of Hopf algebras \seqm{A}{\theta}{H_*(\Omega P;\zmodp)} induces a morphism of spectral sequences
$$
\theta\colon\seqm{\hat E^r_{*,*}}{}{E^r_{*,*}}
$$
in the canonical way with $\theta(1\otimes \bar u_i)=1\otimes u_i$, 
$\theta(\bar a_i\otimes 1)=a_i\otimes 1$, and $\theta(z\otimes 1)=z\otimes 1$. 
Note $\seqm{\hat E^m_{0,*}}{\theta}{E^m_{0,*}}$ is just our map \seqm{A}{\theta}{H_*(\Omega P;\zmodp)},
and $A_q=H_q(\Omega P;\zmodp)=\{0\}$ for $0<q<m-1$.

Suppose \seqm{A_q}{\theta}{H_q(\Omega P;\zmodp)} is an isomorphism for $0<q<k$. 
This implies 
$\seqm{\hat E^r_{0,q}}{\theta}{E^r_{0,q}}$
is an isomorphism, and
$\seqm{\hat E^r_{i,q}}{\theta}{E^r_{i,q}}$ 
is an isomorphism when $q+r-1<k$.
Since $E^{\infty}_{*,*}=\{0\}$ and $\hat E^{\infty}_{*,*}=\{0\}$ when $(*,*)\not=(0,0)$,
for some sufficiently large $M>m$ the map $\seqm{\hat E^M_{0,k}}{\theta}{E^M_{0,k}}$ is an isomorphism.
By definition of spectral sequences, there is a commutative diagram of short exact sequences
\[\diagram
\hat E^{r-1}_{r-1,k-r+2}\rto^{\hat d^{r-1}}\dto^{\theta} & 
\hat E^{r-1}_{0,k}\rto^{proj.}\dto^{\theta} & 
\hat E^{r}_{0,k}\rto^{}\dto^{\theta} & 0\ddouble\\
E^{r-1}_{r-1,k-r+2}\rto^{d^{r-1}}& 
E^{r-1}_{0,k}\rto^{proj.}& 
E^{r}_{0,k}\rto^{}& 0.
\enddiagram\]
By induction the first vertical map is an isomorphism when $r>2$.
When $r=M$ the third vertical map is an isomorphism, 
and so the second vertical map is also an isomorphism.
Iterating this argument over $m\leq r<M$, we see that the map
$$
\theta\colon\seqm{A_k=\hat E^{m}_{0,k}}{}{E^{m}_{0,k}=H_k(\Omega P;\zmodp)}
$$ 
is an isomorphism. This completes the induction.

It remains to check that $\hat E^{\infty}_{*,*}=\{0\}$ for $(*,*)\not=(0,0)$.
Let $\bar E$ be mod-$p$ homology spectral sequence for the path fibration of $\bar P$. 
We have 
$$
\bar E^m_{*,*}\cong \bar E^2_{*,*}= H_*(\bar P;\zmodp)\otimes H_*(\Omega\bar P;\zmodp)
\cong \zmodp\{1,a_1,\ldots,a_\ell\}\otimes T(\bar u_1,\ldots,\bar u_\ell),
$$
and $\bar E^{\infty}_{*,*}=\{0\}$ when $(*,*)\neq (0,0)$.
The Hopf algebra map
$\seqm{H_*(\Omega\bar P;\zmodp)\cong T(\bar u_1,\ldots, \bar u_\ell)}{}{A}$ 
induces a morphism of spectral sequences
$$
\phi\colon \seqm{\bar E}{}{\hat E}
$$
in the canonical way with $\phi^2(1\otimes\bar u_i)=1\otimes \bar u_i$, $\phi^2(a_i\otimes 1)=a_i\otimes 1$.
Notice
$$
\phi^r\colon \seqm{\bar E^r_{i,j}}{}{\hat E^r_{i,j}}
$$ 
is an epimorphism when $i<n$, and is an isomorphism when $i<n$, $j<n-2$, and $r\leq m'$. 
The differentials 
\seqm{\hat E^{r}_{i,j}}{\hat d^{r}}{\hat E^{r}_{i-r,i+r-1}}
are zero for $r<i$ and $i<n$,
so when $i<n$ and $r<\min\{i,n-i\}$, we have projections
\begin{equation}
\label{EProj}
\seqm{\hat E^{r}_{i,j}}{}{\hat E^{r+1}_{i,j}}
\end{equation}   
that are isomorphisms.
Also, $\chi$ is nonzero in $\bar E^r_{0,n-2}$ for $r\leq n-m'$, 
and zero for $r>n-m'$ since $\bar d^{n-m'}(\zeta)=\chi$.

Note $\hat E^m_{k,l}=\{0\}$ when $0<k<m$, $k>n$, or $l<m-1$.
We will first consider those nonzero elements in $\hat E^m_{n,l}$ and $\hat E^m_{0,l}$ for $l\geq m-1$.

Take any nonzero $x\in \hat E^m_{n,l}$. Then $x=z\otimes w$ for some nonzero $w\in A$.
Note $\hat d^r(\hat E^r_{n,l})=\{0\}$ when $r<m'$, so we can project $x$ to $\hat E^{m'}_{n,l}$.
Pick $w'\in T(\bar u_1,\ldots,\bar u_\ell)$ such that $w'$ projects onto $w\in A$.
Since $w$ is nonzero in $A$, $w'$ is not in the two-sided ideal generated by $\chi$.
Take the element following element in $T(\bar u_1,\ldots,\bar u_\ell)$:
$$
\sigma'_j=w\bracket{\csum{|a_i|=m'}{}{c_{ij}\bar u_i}}.
$$
Let $\sigma_j\in A$ be the projection of $\sigma'_j$ onto $A$. We have 
\begin{align*}
\hat d^{m'}(x)&=(1\otimes w)\hat d^{m'}(z\otimes 1)=(1\otimes w)(\zeta)\\
&= (-1)^{m'}\csum{|a_i|=m',|a_j|=n-m'}{}{c_{ij} (a_j\otimes (w\bar u_i))}\\
&= (-1)^{m'}\csum{|a_j|=n-m'}{}{a_j\otimes\sigma_j}. 
\end{align*}
By condition ($\ast$) there are integers $k<l$, with $|a_l|=m'$ and $|a_k|=n-m'$, such that $c_{lk}$ is prime to $p$,
so the element $\sigma'_k$ is nonzero. 
Because $n$ is odd and $w'$ is not in the two-sided ideal generated by $\chi$,  
Lemma~\ref{LADJ} implies $\sigma'_k$ is not in the two-sided ideal generated by $\chi$. 
Therefore $\sigma_k\in A$ is nonzero, 
so $a_k\otimes\sigma_k\in\hat E^m_{*,*}=\zmodp\{1,a_1,\ldots,a_\ell,z\}\otimes A$ is nonzero,
and by this we see that $\hat d^{m'}(x)\in\hat E^m_{n-m',l+m'-1}$ is also nonzero.
By the projection isomorphisms~(\ref{EProj}), this implies $\hat d^{m'}(x)\in\hat E^{m'}_{n-m',l+m'-1}$ is nonzero,
and so $x$ does not survive to $\hat E^{m'+1}_{n,l}$.
Thus $\hat E^{\infty}_{n,l}=\hat E^{m'+1}_{n,l}=\{0\}$.

Now take $x\in \hat E^m_{0,l}$ for $l\geq m-1$. 
We can pick $x'\in\bar E^m_{0,l}$ so that $\phi^m(x')=x$.
Since $\bar E^{\infty}_{0,l}=\{0\}$, there exists an $\dot x\in \bar E^r_{*,*}$
for some $r\geq m$ such that $\bar d^r(\dot x)=x'$. 
Then in $\hat E^r_{0,l}$, 
$$
x=\phi^r(x')=\phi^r(\bar d^r(\dot x))=\hat d^r(\phi^r(\dot x)),
$$ 
and so $x=0$ in $\hat E^{r+1}_{0,l}$. Thus $\hat E^{\infty}_{0,l}=\{0\}$.

It remains to consider those elements in $\hat E^m_{k,l}$ when $m\leq k<n$. 
Because the elements in $\hat E^m_{k,0}$ for $m\leq k<n$ are transgressive, 
the differentials \seqm{\hat E^i_{k,l}}{d^i}{\hat E^i_{k-i,l+i-1}} 
are zero for $l\geq 0$ and $m\leq i< k$,
and so we might as well project to $\hat E^k_{k,l}$.

Suppose $x\in \hat E^k_{k,l}$ and $x\neq 0$.
We will show that $\hat d^k(x)\neq0$.
Hence $\hat E^{\infty}_{k,l}=\hat E^{k+1}_{k,l}=\{0\}$.
There are three subcases: $m\leq k<n-m'$, $k=n-m'$, and $n-m'<k<n$.
We assume $x\neq 0$ and $\hat d^k(x)=0$ to arrive at a contradiction.

Let us first consider the case $m\leq k<n-m'$. 
We can pick $x'\in\bar E^k_{k,l}$ such that $\phi^k(x')=x$. 
Then $\phi^k(\bar d^k(x'))=\hat d^k(x)=0$, 
and so inspecting the kernel of \seqm{\bar E^k_{0,k+l-1}}{\phi^k}{\hat E^k_{0,k+l-1}}, 
$y'=\bar d^k(x')\in \bar E^k_{0,k+l-1}$ must be a linear combination
$$
y'=\csum{i}{}{v_i\chi w_i},
$$
where $v_i$ and $w_i$ are monomials in $T(\bar u_1,\ldots,\bar u_\ell)$.
Since $x'$ is nonzero in $\bar E^k_{*,*}$, and $\bar E^{\infty}_{*,*}=\{0\}$ for $(*,*)\neq (0,0)$, 
$y'$ must also nonzero in $\bar E^k_{*,*}$, 
and so we might as well assume the monomials $v_i$ and $w_i$ are nonzero.
Since $y'$ is in the image of $\bar d^k$ with $k<n-m'$, 
and (by definition of $m'$) $\chi$ is linear combination containing summands $c_{ij}u_i u_j$
with $|u_j|=n-m'$ and $c_{ij}$ prime to $p$,
each of the $w_i$'s must be a monomial of length at least $1$ of the form
$$
w_i=w'_i\bar u_{k_i}
$$ 
for some monomial $w'_i$ and $\bar u_{k_i}$ such that $|\bar u_{k_i}|=k$.
Since $\bar d^k(x')=y'$,
$$
x'=\csum{i}{}{a_{k_i}\otimes v_i\chi w'_i}.
$$   
But since $\chi$ is zero in $A$, each term $v_i\chi w'_i$ is as well, 
and so $x=0$ in $\hat E^k_{k,l}$, a contradiction.
Hence we must have $\hat d^k(x)\neq 0$.

Now consider the case $n-m'<k<n$.
As in the previous case, we can write $y'$ as
$$
y'=\csum{i}{}{v_i\chi w_i}.
$$
Here $w\chi$ is also zero in $\bar E^{k}_{*,*}$ for any monomial $w$, since 
$$
w\chi=\bar d^{n-m'}\bracket{(-1)^{m'}\csum{|a_i|=m',|a_j|=n-m'}{}{c_{ij} (a_j\otimes w\bar u_i)}}
$$
Therefore each monomial $w_i$ is nonzero of length at least $1$,
(for otherwise $0=y'=\bar d^k(x')$, which implies $x'=0$), 
with
$w_i=w'_i\bar u_{k_i}$ for some monomial $w'_i$ 
and $\bar u_{k_i}$ such that $|\bar u_{k_i}|=k$.  
As before this implies $x=0$, a contradiction.
Thus $\hat d^k(x)\neq 0$.

Finally let us consider $k=n-m'$. 
In this case we can write
$$
y'=\csum{i}{}{v_i\chi w_i}+\csum{i}{}{y_i\chi}
$$
for some nonzero monomial $v_i$, and nonzero monomial $w_i$ of length at least $1$.
As before, we must have $w_i=w'_i\bar u_{k_i}$ for some $w'_i$ 
and $\bar u_{k_i}$ such that $|\bar u_{k_i}|=n-m'$. 
Let $\zeta'\in\bar E^{n-m'}_{n-m',m'-1}$ be the element satisfying
$\phi^{n-m'}(\zeta')=\zeta$.
Notice that in $\bar E^{n-m'}_{0,n-2}$ 
we have $\bar d^{n-m'}(b\zeta')=\chi$ for some integer $b$ prime to $p$. 
Thus 
$$
x'=\csum{i}{}{(a_{k_i}\otimes v_i\chi w'_i)}+b\csum{i}{}{(1\otimes y_i)\zeta'}.
$$
Since $\chi$ is zero in $A$, 
$$
x=\phi^{n-m'}(x')=\phi^{n-m'}(b\csum{i}{}{(1\otimes y_i)\zeta'})=b\csum{i}{}{(1\otimes y_i)\zeta}.
$$
But in $\hat E^{m'}_{*,*}$ we have $\bar d^{m'}(z\otimes 1)=\zeta$, 
so $\bar d^{m'}(z\otimes y_i)=(1\otimes y_i)\zeta$. 
So because $m'<n-m'$, $(1\otimes y_i)\zeta$ is zero in $\hat E^{n-m'}_{*,*}$, 
and it follows that $x=0$, a contradiction. Hence $\hat d^k(x)\neq 0$.

\end{proof}

\section{Some additional structure on the mod-$p$ Cohomology ring}
\label{SStruct}
The non-trivial action of Bockstein operations impose the restrictions seen in Proposition~\ref{P1}. 
Later we will see they are necessary for Theorem~\ref{Main} to be true in general.
Recall that the mod-$p$ Bockstein operations $\beta_r\colon\seqm{H_*(X)}{}{H_{*-1}(X)}$
are derivations with respect to the homology multiplication induced by an $H$-space structure on $X$. 
That is,
$$
\beta_r(xy)=\beta_r(x)y+(-1)^{|x|}x\beta_r(y).
$$

\begin{proof}[Proof of Proposition~\ref{P1}]
Take the attaching map \seqm{S^{2n-2}}{\alpha}{\bar W} and its adjoint $\alpha'$. 
By Proposition~\ref{L1}, in $H_{2n-3}(\Omega \bar W;\zmodp)$ we have
$$
\alpha'_*(\iota_{2n-3})=\csum{i,j}{}{(-1)^{n-1}a_{ij}[\bar u_i,\bar v_j]}.
$$ 
for some generator $\iota_{2n-3}\in H_{2n-3}(S^{2n-3};\zmodp)$. 
As sets we have 
$$\paren{r_1,r_2,\ldots,r_{k_1}}=\paren{s_1,s_2,\ldots,s_l}$$
for some $s_1<s_2<\cdots<s_l$ and $l\leq k_1$. Since $\bar u_i$ and $\bar v_i$ are 
the transgressions of $x_i$ and $y_i$, 
$\beta_{r_i}(\bar v_{i})=\bar u_{i}$ for $1\leq i\leq k_1$, 
and $\beta_{r}(\bar v_{i})=0$ for each $r>0$ and $k_1<i\leq k$.
Therefore 
$\beta_{s_t}([\bar u_i,\bar v_j])=0$ whenever $j>k_1$ or $s_t\neq r_j$. Then because 
$\beta_{s_t}(\alpha'_*(\iota_{2n-3}))=\alpha'_*(\beta_{s_t}(\iota_{2n-3}))=\alpha'_*(0)=0$
for each $t$,
$$
\begin{array}{rcl}
0&=&\csum{t=1}{l}{\beta_{s_{t}}(\alpha'_*(\iota_{2n-3}))}\\
&=&\csum{t=1}{l}{\beta_{s_{t}}\bracket{\csum{i,j}{}{(-1)^{n-1}a_{ij}[\bar u_i,\bar v_j]}}}\\
&=&\csum{j\leq k_1\mbox{, }i}{}{(-1)^{n-1}a_{ij}\beta_{r_j}([\bar u_i,\bar v_j])}\\
&=&(-1)^{n-1}\csum{j\leq k_1\mbox{, }i}{}{a_{ij}[\bar u_i,\bar u_j]}\\
&=&(-1)^{n-1}\bracket{\csum{i=1}{k_1}{a_{ii}[\bar u_i,\bar u_i]}+
\csum{j<i\leq k_1}{}{(a_{ij}-(-1)^{n}a_{ji})[\bar u_i,\bar u_j]}+
\csum{j\leq k_1,i>k_1}{}{a_{ij}[\bar u_i,\bar u_j]}}.\\
\end{array}
$$
When $n$ is odd it follows that $a_{ii}=0$ and $a_{ij}+a_{ji}=0$ whenever $j< i\leq k_1$, 
and $a_{ij}=0$ whenever $1\leq j\leq k_1$ and $k_1<i\leq k$. 
Namely $B_{z^*}$ is skew symmetric and $C_{z^*}=0$. 
When $n$ is even $a_{ij}-a_{ji}=0$ and $[\bar u_i,\bar u_i]=0$,
so there is no restriction on the $a_{ii}$'s. 
In this case $B_{z^*}$ is symmetric, and likewise $C_{z^*}=0$.
\end{proof}

\section{The Effect of Looping in Rank One}
\label{RankOne}

Fix a class $[V]\in \mathcal{T}^{p}_{1,n}$ for $p$ an odd prime.
Again we assume that all our spaces are localized at $p$. 
$H_*(V;\zmodp)$ is generated by $x$, $y$, and $z$, where $|x|=n-1$, $|y|=n$, and $|z|=2n-1$.
If $\beta_r(y)=x$ for some $r>0$, then we can and will take $V\in [V]$ 
so that $(2n-2)$-skeleton $\bar V$ of $V$ is the Moore space space $\M{n}{r}$.
Similarly, when $\beta_r(y)=0$ for every $r>0$, $V\in [V]$ can be taken so that $\bar V=S^{n-1}\vee S^n$.

Let $u$ and $v$ in $H_*(\Omega V;\zmodp)$ be the transgressions of $x$ and $y$ respectively, with $|v|=n-1$ and $|u|=n-2$. 
The following are consequences of Proposition~\ref{P0} and Proposition~\ref{L1}.

\begin{corollary}
\label{C1}
Take $[V]\in \mathcal{T}^{p}_{1,n}$ with $n\geq 3$. 
Then $H_*(\Omega V;\zmodp)\cong T(u)\otimes T(v)$ as Hopf algebras.~$\qqed$
\end{corollary}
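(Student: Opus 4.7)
The plan is to apply Theorem~\ref{P0} with $P = V$ and read off the ideal $I$ from Proposition~\ref{L1}.

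First I would verify the standing assumptions of Section~\ref{SLHom} and the hypothesis $(\ast)$ of Theorem~\ref{P0}. Here the connectivity parameter is $m = n-1$ and the total dimension is $2n-1$. The space $\bar V$ is the $(2n-2)$-skeleton by definition, so $\dim\bar V<\dim V$; the suspension hypothesis holds because $\bar V$ is either $S^{n-1}\vee S^n$ or $\M{n}{r}$, each of which is a double suspension for $n\geq 3$; and $2n-1$ is odd, with $3(n-2)>2n-3$ provided $n\geq 4$. For $(\ast)$, Poincar\'e duality in rank one forces $a_{11}\neq 0$, so taking $a=x^*$ and $b=y^*$ gives $0<|a|<|b|<2n-1$, $|a|+|b|=2n-1$, and $y^*x^*=a_{11}z^*\neq 0$. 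The borderline case $n=3$ falls outside the inequality $3(m-1)>n-2$ and may require a separate direct treatment.

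Next, Theorem~\ref{P0} yields a Hopf algebra isomorphism $H_*(\Omega V;\zmodp)\cong T(u,v)/I$, where $I$ is generated by the degree $2n-3$ element $\chi$ from Proposition~\ref{L1}. In the rank-one setting the only nonempty set $\mathcal{A}_{s,0}$ is $\mathcal{A}_{n-1,0}$, containing the single pair $(1,2)$ corresponding to $x$ and $y$, with $c_{12}=a_{11}$. Thus
$$\chi = (-1)^{n-1}b_{n-1}a_{11}[u,v],$$
a nonzero scalar multiple of $[u,v]$ (since $b_{n-1}$ is prime to $p$ by Proposition~\ref{L1} and $a_{11}\neq 0$), and $I$ is exactly the two-sided ideal generated by $[u,v]$.

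The remaining step is algebraic. Since $|u|=n-2$ and $|v|=n-1$ are consecutive integers, $|u||v|$ is even, so $[u,v]=uv-vu$ and the quotient relation is ordinary commutativity. Under the tensor product convention for graded Hopf algebras $T(u)\otimes T(v)$ is primitively generated with underlying algebra $\zmodp[u,v]$ (again the sign twist is trivial since $|u||v|$ is even), and the assignment $u\mapsto u\otimes 1$, $v\mapsto 1\otimes v$ extends to the desired Hopf algebra isomorphism $T(u,v)/([u,v])\cong T(u)\otimes T(v)$. The main technical point I expect to focus on is the explicit computation of $\chi$ via Proposition~\ref{L1} in the rank-one case; once one verifies that the sum collapses to a single bracket with a unit coefficient prime to $p$, the rest of the argument is formal.
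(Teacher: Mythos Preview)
Your proposal is correct and matches the paper's approach exactly: the paper simply states that Corollary~\ref{C1} is a direct consequence of Theorem~\ref{P0}, and you have spelled out precisely that deduction---verifying the standing hypotheses, invoking Theorem~\ref{P0} to get $T(u,v)/I$, computing via Proposition~\ref{L1} that $I=([u,v])$ in rank one, and identifying the quotient with $T(u)\otimes T(v)$. Your observation that the inequality $3(m-1)>N-2$ (with $m=n-1$, $N=2n-1$) fails at $n=3$ is a genuine gap in the paper's stated range; the paper never addresses it, but since all applications occur for $n=2m\geq 6$ this does not affect the main results.
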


\begin{corollary}
\label{C2}
Take $[V]\in \mathcal{T}^{p}_{1,n}$ with $n\geq 3$.
Let $\alpha\colon S^{2n-2}\to \M{n}{r}$ be the attaching map for the $V$, 
and $\alpha'\colon S^{2n-3}\to \Omega \M{n}{r}$ be the adjoint map of $\alpha$. Then 
$$
\alpha'_*(\iota_{2n-3})=[u,v].
$$~$\qqed$
\end{corollary}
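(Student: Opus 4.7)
The plan is to specialize Proposition~\ref{L1} to the rank one case, with $V$ playing the role of $P$ and $\bar V$ playing the role of $\bar P$. First I would verify the hypotheses from the opening of Section~\ref{SLHom}: the role of ``$n$'' there is played by $\dim V = 2n-1$, and the role of ``$m$'' by $n-1$ (since $V$ is $(n-2)$-connected). The parity condition holds because $2n-1$ is odd, the connectivity condition $3(m-1) > \dim V - 2$ reads $3(n-2) > 2n-3$, i.e.\ $n > 3$, and $\bar V$ is a double suspension (either $\M{n}{r}\simeq \Sigma^2 \M{n-2}{r}$ or $S^{n-1}\vee S^n \simeq \Sigma^2(S^{n-3}\vee S^{n-2})$) whenever $n \geq 4$.

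Next I would identify the summands entering the formula of Proposition~\ref{L1}. Label the nonzero reduced mod-$p$ homology classes of $V$ as $a_1 = x$, $a_2 = y$, and $z$ with $|a_1| = n-1$, $|a_2| = n$, $|z| = 2n-1$. Poincar\'e duality forces the cup product pairing to be nonsingular, so the scalar $c_{12} \in \zmodp$ defined by $y^* x^* = c_{12} z^*$ is a unit. With $\eta = \lceil (2n-1)/2\rceil = n$, the sum in Proposition~\ref{L1} ranges over $s \in \{n-1, n\}$. Direct inspection of the index sets gives $\mathcal{A}_{n-1,0} = \{(1,2)\}$ (yielding $\kappa_{n-1} = c_{12}[u,v]$) and $\mathcal{A}_{n,0} = \emptyset$ (since $i<j$ forces $|a_i|\leq|a_j|$ under our chosen ordering, ruling out the required $|a_i| = n > n-1 = |a_j|$). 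Hence Proposition~\ref{L1} delivers
\[
\alpha'_*(\iota_{2n-3}) = (-1)^{n-1}\, b_{n-1}\, c_{12}\, [u, v]
\]
for some integer $b_{n-1}$ prime to $p$.

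Finally, since both $b_{n-1}$ and $c_{12}$ are units in $\zmodp$, the right-hand side is a nonzero scalar multiple of $[u,v]$; rescaling the basis vectors $x, y$ of $H^*(\bar V;\zmodp)$ (which rescales their transgressions $u, v$) absorbs this constant and yields $\alpha'_*(\iota_{2n-3}) = [u,v]$ on the nose. There is no substantive obstacle in this proof: all of the technical work has been carried out inside Proposition~\ref{L1}, and the rank one hypothesis collapses the double sum to a single bracket. The only points requiring care are the notational translation (the ``$n$'' of Section~\ref{SLHom} becomes $2n-1$ here) and a brief separate check of the edge case $n = 3$, where the connectivity inequality $3(m-1) > \dim V - 2$ fails and Proposition~\ref{L1} does not apply directly.
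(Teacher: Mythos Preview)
Your proposal is correct and is precisely the approach the paper intends: the paper gives no separate proof, merely stating that Corollary~\ref{C2} is a ``direct consequence'' of Proposition~\ref{L1}, and you have carried out that specialization accurately, including the index-set computation and the normalization of the unit constant. Your flagging of the $n=3$ boundary case (where $3(m-1)>\dim V-2$ becomes $3=3$) is a genuine observation the paper glosses over, though it is harmless for the sequel since all later applications require $n=2m>4$.
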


The following lemma is a special case of Barratt's work on growth of homotopy exponents~\cite{Barratt}, 
or Theorem~$4.1$ in~\cite{GW}. 

\begin{proposition}
\label{Lexp}
Fix some $n\geq 4$, and let $C$ be a finite wedge of Moore spaces $\M{n}{r_i}$. 
Let $s=\max_{i}\paren{r_i}$. Then 
$$
p^{s+k}\pi_{i}(C)=0\mbox{ if }\,i\leq p^{k+1} (n-2).
$$~$\qqed$ 
\end{proposition}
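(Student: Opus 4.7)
The plan is to reduce the bound on $\pi_{*}(C)$ to a bound on the homotopy of individual Moore space summands via the Hilton--Milnor splitting, and then to apply Barratt's growth of exponent theorem (as formulated in \cite{GW}~Theorem~4.1).

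Since $n\geq 4$, each summand $\M{n}{r_{i}}\simeq\Sigma^{2}\M{n-2}{r_{i}}$ is a double suspension, so $C=\Sigma C'$ with $C'=\bigvee_{i}\M{n-1}{r_{i}}$ a simply-connected wedge. Applying Hilton--Milnor yields a weak homotopy equivalence
$$
\Omega C\simeq\prod_{\alpha}\Omega\Sigma W_{\alpha},
$$
where $\alpha$ indexes a Hall basis of iterated brackets on the wedge summands and $W_{\alpha}$ is the corresponding $j(\alpha)$-fold smash product. Consequently $\pi_{i}(C)=\pi_{i-1}(\Omega C)=\bigoplus_{\alpha}\pi_{i}(\Sigma W_{\alpha})$.

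At the odd prime $p$ there is the standard splitting
$$
\M{a}{r}\wedge\M{b}{t}\simeq\M{a+b}{\min(r,t)}\vee\M{a+b-1}{\min(r,t)},
$$
so inductively each $W_{\alpha}$ is a wedge of Moore spaces $\M{N}{r}$ with $r\leq s$. Since $\M{n-1}{r_{i}}$ is $(n-3)$-connected, a $j$-fold smash is $(j(n-2)-1)$-connected; hence $\Sigma W_{\alpha}$ is $j(n-2)$-connected, and $\pi_{i}(\Sigma W_{\alpha})=0$ whenever $i\leq j(n-2)$. So for $i\leq p^{k+1}(n-2)$, only the finitely many basic brackets with $j<p^{k+1}$ can contribute to $\pi_{i}(C)$.

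For each individual Moore space summand $\M{N}{r}$ of $\Sigma W_{\alpha}$, Barratt's growth of exponent theorem gives $p^{r+k}\pi_{*}(\M{N}{r})=0$ in a range of the form (degree) $\leq p^{k+1}\cdot(\text{connectivity})$. Since $r\leq s$ uniformly across all contributing factors, assembling these bounds through the decomposition yields $p^{s+k}\pi_{i}(C)=0$ for $i\leq p^{k+1}(n-2)$. The main subtlety is aligning the Barratt exponent growth (worsening by a factor $p$ per step) with the connectivity gain in each iterated smash; this precise matching is exactly what is formalized in \cite{GW}~Theorem~4.1, so in practice the proof reduces to quoting that result once the Hilton--Milnor decomposition and the Moore-space smash formula have been combined as above.
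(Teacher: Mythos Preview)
The paper gives no proof of this proposition at all; the sentence immediately preceding it states that it ``is a special case of Barratt's work on growth of homotopy exponents~\cite{Barratt}, or Theorem~4.1 in~\cite{GW},'' and that is the entire argument. So there is nothing detailed to compare against, and your proposal is already more than the paper offers.

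That said, your Hilton--Milnor reduction is an unnecessary detour, and as written it is slightly circular. Barratt's exponent-growth theorem (and the version in~\cite{GW}) applies to any double suspension whose identity map has order $p^{r}$, not just to a single Moore space. For the wedge $C=\bigvee_{i}\M{n}{r_{i}}$ one has $p^{s}[\mathbbm{1}_{C}]=0$ directly, since the co-$H$ structure on a wedge is summand-wise and $p^{s}$ kills the identity on each $\M{n}{r_{i}}$. Hence the theorem applies to $C$ in one stroke, which is precisely what the paper means by ``special case.'' Your route through Hilton--Milnor never actually arrives at single Moore spaces: each $\Sigma W_{\alpha}$ is again a \emph{wedge} of Moore spaces, so to finish you would have to iterate the decomposition or, in the end, invoke the theorem for a wedge anyway. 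The connectivity bookkeeping you carry out is correct but not needed once one observes $p^{s}\mathbbm{1}_{C}\simeq\ast$.
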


Let $\MS{2m-1}{r}$ be the homotopy theoretic fiber of the degree $p^r$ map \seqm{S^{2m-1}}{p^r}{S^{2m-1}}.
Recall from~\cite{CMN} there exists map 
\begin{equation}
\label{EMaph}
h\colon\seqm{\MS{2m-1}{r}}{h}{\Omega\M{2m}{r}}
\end{equation}
that is modelled on mod-$p$ homology by mapping $H_*(\MS{2m-1}{r};\zmodp)$ 
isomorphically onto the left $T(u)$-submodule of $H_{*}(\Omega\M{2m}{r};\zmodp)\cong T(u,v)$.
This map has a left homotopy inverse \seqm{\Omega\M{2m}{r}}{h^{-1}}{\MS{2m-1}{r}} 
that induces a map on mod-$p$ homology modelled by the abelianization map \seqm{T(u,v)}{}{S(u,v)}.

\begin{lemma}
\label{L2}
Let $[V]\in \mathcal{T}^{p}_{1,2m}$ with $m>2$ and $\beta_r(y)=x$ for some $r>0$.
Then $v^2$ is spherical in $H_*(\Omega V;\zmodp)$.
\end{lemma}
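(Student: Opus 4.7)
The plan is to realize $v^2$ as the mod-$p$ Hurewicz image of a map $S^{4m-2}\to\Omega V$, built via a Samelson-bracket construction. Corollary~\ref{C1} gives $H_{4m-2}(\Omega V;\zmodp)=\zmodp\{v^2\}$, and since $|v|$ is odd one checks directly from the coproduct that $v^2$ is primitive. Because $m>2$, $\bar V=\M{2m}{r}\simeq\Sigma\M{2m-1}{r}$, so the inclusion $i\colon\bar V\hookrightarrow V$ has loop adjoint $\tilde i\colon\M{2m-1}{r}\to\Omega V$ sending the canonical generators $u_0,v_0\in H_*(\M{2m-1}{r};\zmodp)$ to $u,v$.

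Form the Samelson bracket
\[
\langle\tilde i,\tilde i\rangle\colon\M{2m-1}{r}\wedge\M{2m-1}{r}\longrightarrow\Omega V.
\]
On mod-$p$ homology it sends $u_0\otimes u_0\mapsto[u,u]=0$, $u_0\otimes v_0\mapsto[u,v]$, $v_0\otimes u_0\mapsto[v,u]=-[u,v]$, and $v_0\otimes v_0\mapsto[v,v]=2v^2$. The brackets $[u,u]$ and $[u,v]$ both vanish in $H_*(\Omega V)$---the first by evenness of $|u|$, the second by Theorem~\ref{P0}---so the only nontrivial image is $2v^2$. At odd $p$,
\[
\M{2m-1}{r}\wedge\M{2m-1}{r}\simeq_{(p)}\M{4m-3}{r}\vee\M{4m-2}{r},
\]
with $v_0\otimes v_0$ generating the top cell of the $\M{4m-2}{r}$ summand. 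Restricting $\langle\tilde i,\tilde i\rangle$ yields $\phi\colon\M{4m-2}{r}\to\Omega V$ sending the top-degree generator to $2v^2$ and zero on $H_{4m-3}$.

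It remains to show the restriction $\phi_0=\phi|_{S^{4m-3}}$ to the bottom cell of $\M{4m-2}{r}$ is null-homotopic, so that $\phi$ factors as $\M{4m-2}{r}\xrightarrow{\mathrm{pinch}}S^{4m-2}\xrightarrow{\tilde\phi}\Omega V$ with $\tilde\phi_*(\iota_{4m-2})=2v^2$, yielding sphericity since $2\in\zmodp^{\times}$. Because $\tilde i$ factors as $\M{2m-1}{r}\to\Omega\bar V\xrightarrow{\Omega i}\Omega V$, so does $\phi_0$, via some $\phi_0^{\bar V}\colon S^{4m-3}\to\Omega\bar V$; the class $[\phi_0^{\bar V}]\in\pi_{4m-3}(\Omega\bar V)$ has mod-$p$ Hurewicz image $2[u,v]$, exactly twice that of the adjoint attaching map $\alpha'$ from Corollary~\ref{C2}. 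Since $(\Omega i)\circ\alpha'\simeq*$ by construction of $\alpha$, identifying $[\phi_0^{\bar V}]$ with $2[\alpha']$ in $\pi_{4m-3}(\Omega\bar V)$ forces $\phi_0\simeq*$. The principal obstacle is upgrading this mod-$p$ Hurewicz agreement to an actual equality of homotopy classes, which I would approach by exploiting the tensor algebra structure on $H_*(\Omega\bar V)$ together with the exponent bound from Proposition~\ref{Lexp} applied to $\bar V$ to constrain $p$-torsion in $\pi_{4m-3}(\Omega\bar V)$ and rule out a discrepancy in the kernel of the Hurewicz homomorphism.
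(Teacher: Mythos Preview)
Your Samelson-bracket construction is a natural idea, but the gap you flag at the end is real and your suggested fix does not close it. Knowing that $[\phi_0^{\bar V}]$ and $2[\alpha']$ have the same image under the mod-$p$ Hurewicz map $\pi_{4m-3}(\Omega\bar V)\to H_{4m-3}(\Omega\bar V;\zmodp)$ does not force them to agree in $\pi_{4m-3}(\Omega\bar V)$: the kernel of this Hurewicz map is not trivial in general, and Proposition~\ref{Lexp} only tells you that $p^r$ annihilates $\pi_{4m-2}(\bar V)$, which says nothing about distinguishing two specific classes with the same Hurewicz image. Nor can you bypass $\bar V$ and argue directly in $\Omega V$, since there the Hurewicz image of $\phi_0$ is zero and you would need the Hurewicz map to be injective in degree $4m-3$, which it is not.

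The paper's proof avoids this obstacle entirely by reversing the logic: instead of building a map from a Samelson product and then trying to relate it to the attaching map, it \emph{starts} from the attaching map $\alpha\colon S^{4m-2}\to\bar V$. Proposition~\ref{Lexp} gives $p^r[\alpha]=0$, so $\alpha$ extends to $\bar\alpha\colon\M{4m-1}{r}\to\bar V$; its adjoint $\bar\alpha'\colon\M{4m-2}{r}\to\Omega\bar V$ then has bottom-cell restriction equal to $\alpha'$ itself. After composing with $\Omega i$, the restriction to $S^{4m-3}$ is the adjoint of $i\circ\alpha$, which is null-homotopic \emph{by definition of $V$}. The identification of the top-cell image with $v^2$ is then a short Bockstein computation using Corollary~\ref{C2}. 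The point is that the null-homotopy you need is built into the construction from the start, rather than something to be deduced after the fact from a homology calculation.
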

\begin{proof}

Let \seqm{S^{4m-2}}{\alpha}{\M{2m}{r}} be the attaching map for $V$. 
By Proposition~\ref{Lexp}, $[\alpha]$ has order $p^r$ in $\pi_{4m-2}(\M{2m}{r})$. 
Thus $\alpha$ extends to a map \seqm{\M{4m-1}{r}}{\bar\alpha}{\M{2m}{r}}. 
By taking the adjoint of $\bar\alpha$, we have the map
$$
\bar\alpha'\colon \seqm{\M{4m-2}{r}}{}{\Omega \M{2m}{r}}
$$
which induces 
$$
\bar\alpha'_*\colon \seqm{\bar H_*(\M{4m-2}{r};\zmodp)}{}{H_*(\Omega \M{2m}{r};\zmodp)}.
$$

Let $u'\in H_{4m-3}(\M{4m-2}{r};\zmodp)$ and $v'\in H_{4m-2}(\M{4m-2}{r};\zmodp)$ be a basis with $\beta_r(v')=u'$.
Since $\bar\alpha'$ restricted to $S^{4m-3}$ is $\alpha'$, we have
$$
\bar\alpha'_*(u')=\pm [u,v].
$$
We may assume that $\bar\alpha'_*(u')=[u,v]$ because otherwise we can replace $u'$ and $v'$ to be $-u'$ and $-v'$. 

When $m>2$, $|u^3|=3(2m-2)=6m-6>|v^2|=2(2m-1)=4m-2$, which implies $2m>4$.
Then $H_{4m-2}(\Omega \M{2m}{r};\zmodp)$ is a $1$-dimensional vector space with a basis given by $v^2$,
implying $\alpha'_*(v')=kv^2$ for some $k$. Since
$$
k[u,v]=\beta_r(kv^2)=\beta_r(\bar\alpha'_*(v'))=\bar\alpha'_*(\beta_r(v'))=\bar\alpha'_*(u')=[u,v],
$$
we have $k=1$. Therefore $\bar\alpha'_*(v')=v^2$. 

Consider the composite
$$
f\colon\seqmm{\M{4m-2}{r}}{\bar\alpha'}{\Omega \M{2m}{r}}{}{\Omega V}.
$$
By Corollary~\ref{C1} there is a Hopf algebra isomorphism $H_*(\Omega V;\zmodp)\cong T(u)\otimes T(v)$, 
and the $H$-map \seqm{\Omega\M{2m}{r}}{}{\Omega V} induces a map on mod-$p$ homology
modelled by the algebra map \seqm{T(u,v)}{}{T(u)\otimes T(v)} sending $u$ to $u$ and $v$ to $v$. 
Thus $f_*(\iota_{4m-2})=v^2$.

Now $f$ factors through a map 
$$
\bar f\colon\seqm{S^{4m-2}}{}{\Omega V}, 
$$
because its restriction to the bottom sphere $S^{4m-3}$ is the adjoint 
of the null homotopic map \seqmm{S^{4m-2}}{\alpha}{\M{2m}{r}}{}{V}, 
and we have $\bar f_*(\iota_{4m-2})=f_*(\iota_{4m-2})=v^2$.

\end{proof}

\begin{theorem}
\label{T1}
Take $[V]\in\mathcal{T}^{p}_{1,2m}$ with $m>2$ and $\beta_r(y)=x$ for some $r>0$.
Then
$$\Omega V\simeq \MS{2m-1}{r}\times\Omega S^{4m-1}.$$
\end{theorem}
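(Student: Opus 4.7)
The plan is to assemble a map
$$F \colon \seqm{\MS{2m-1}{r} \times \Omega S^{4m-1}}{}{\Omega V}$$
out of maps from each factor, verify it is a $\zmodp$-homology isomorphism, and invoke the $p$-local Whitehead theorem. Since $\beta_r(y)=x$ forces the $(4m-2)$-skeleton of $V$ to be $\bar V = \M{2m}{r}$, we may work with the inclusion $i \colon \bar V \hookrightarrow V$. By Corollary~\ref{C1} there is a Hopf algebra isomorphism $H_*(\Omega V;\zmodp) \cong T(u) \otimes T(v)$; since $|u|=2m-2$ is even and $|v|=2m-1$ is odd, the Koszul sign $(-1)^{|u||v|}$ is trivial, so $u$ and $v$ commute and the target algebra is $\zmodp[u,v]$.

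For the first factor, I would use the CMN map $h$ of~(\ref{EMaph}) and set
$$g := \Omega i \circ h \colon \seqm{\MS{2m-1}{r}}{}{\Omega V}.$$
By Theorem~\ref{P0}, $(\Omega i)_*$ is the canonical quotient $T(u,v) \twoheadrightarrow T(u,v)/([u,v]) \cong T(u)\otimes T(v)$, so $g_*$ is injective with image the subspace $\zmodp\{u^i,\, u^i v : i\geq 0\} \subseteq \zmodp[u,v]$. For the second factor, Lemma~\ref{L2} supplies $\bar f \colon S^{4m-2} \to \Omega V$ with $\bar f_*(\iota_{4m-2}) = v^2$. Letting $\bar f^{\sharp} \colon S^{4m-1} \to V$ be its adjoint, the loop map
$$\tilde f := \Omega \bar f^{\sharp} \colon \seqm{\Omega S^{4m-1}}{}{\Omega V}$$
is an $H$-map; since $H_*(\Omega S^{4m-1};\zmodp) \cong \zmodp[\iota]$ and $\tilde f_*(\iota) = v^2$ (via the standard adjunction unit $S^{4m-2} \hookrightarrow \Omega S^{4m-1}$), multiplicativity gives $\tilde f_*(\iota^j) = v^{2j}$ for every $j \geq 0$.

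Now define $F$ by $F(a,b) = g(a)\cdot \tilde f(b)$, using the loop multiplication on $\Omega V$. By K\"unneth, $F_* = \mu_* \circ (g_* \otimes \tilde f_*)$ on $\zmodp$-homology, where $\mu_*$ is the Pontryagin product. Its image then contains both $u^i \cdot v^{2j} = u^i v^{2j}$ and $u^i v \cdot v^{2j} = u^i v^{2j+1}$, which together form a $\zmodp$-basis of $\zmodp[u,v] \cong H_*(\Omega V;\zmodp)$. A degreewise dimension count (Poincar\'e series match) confirms $F_*$ is an isomorphism.

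Since $\MS{2m-1}{r}$, $\Omega S^{4m-1}$, and $\Omega V$ are all simply connected $p$-local spaces of finite type, the $p$-local Whitehead theorem upgrades the $\zmodp$-homology isomorphism $F_*$ to a homotopy equivalence. The essential content is packaged in Lemma~\ref{L2}, which supplies the spherical class $v^2$; the only real risk elsewhere is that $u \cdot v$ and $v \cdot u$ might disagree in $H_*(\Omega V;\zmodp)$, but the parities of $|u|$ and $|v|$ make the Koszul sign trivial, allowing the two factor images to multiply together cleanly to cover every monomial exactly once.
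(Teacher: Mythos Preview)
Your proof is correct and follows essentially the same route as the paper: compose the CMN map $h$ with $\Omega i$ for the first factor, use Lemma~\ref{L2} to produce a map hitting $v^2$ for the second factor, multiply, and check the result is a mod-$p$ homology isomorphism. The only cosmetic difference is that the paper extends $\bar f$ to an $H$-map $\Omega S^{4m-1}\to\Omega V$ via the James universal property, whereas you take the adjoint $\bar f^{\sharp}$ and loop it; these two constructions give homotopic maps by the triangle identity for the loop--suspension adjunction, so the arguments are interchangeable.
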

\begin{proof}

Consider the composite
$$\phi:\seqmm{\MS{2m-1}{r}}{h}{\Omega\M{2m}{r}}{}{\Omega V},$$
where the last map is the looped inclusion. The map \seqm{\MS{2m-1}{r}}{h}{\Omega\M{2m}{r}} is modelled on mod-$p$ homology 
by mapping $H_*(\MS{2m-1}{r};\zmodp)$ isomorphically onto the left $T(u)$-submodule of 
$H_{*}(\Omega\M{2m}{r};\zmodp)\cong T(u,v)$ with basis \paren{1,v}, where $|v|=n-1$ and $|u|=n-2$. 
Also, by Theorem~\ref{P0}, there is a Hopf algebra isomorphism
$$
H_*(\Omega V;\zmodp)\cong T(u)\otimes T(v)
$$ 
and the $H$-map \seqm{\Omega\M{2m}{r}}{}{\Omega V} induces a map on mod-$p$ homology
modelled by the algebra map \seqm{T(u,v)}{}{T(u)\otimes T(v)} sending $u$ to $u$ and $v$ to $v$.
It follows that $\phi_*$ is modelled by an isomorpism onto the left $T(u)$-submodule of 
$T(u)\otimes T(v)$ with basis \paren{1,v}. 

Now consider the map \seqm{S^{4m-2}}{\alpha'}{\Omega V} from the proof of Lemma~\ref{L2} which makes the class 
$v^{2}\in H_*(\Omega V;\zmodp)$ spherical. 
Since $\Omega V$ is an $H$-space, by the universal property of the James construction for $\Omega S^{4m-1}$ 
$\alpha'$ extends to an $H$-map \seqm{\Omega S^{4m-1}}{\theta}{\Omega V}. 
Then $\theta_*$ is modelled on mod-$p$ homology by mapping $T(\iota_{4m-2})$ onto the subalgebra of 
$T(u)\otimes T(v)$ generated by $v^2$. 
    
One now sees that the product 
$$\seqmm{\MS{2m-1}{r}\times \Omega S^{4m-1}}{\phi\times\theta}{\Omega V\times\Omega V}{mult.}{\Omega V}$$ 
induces an isomorphism on mod-$p$ homology, thus is a homotopy equivalence.
\end{proof}

The following theorem is probably well known. 

\begin{theorem}
\label{T1b}
Take $[V]\in\mathcal{T}^{p}_{1,n}$ with $\beta_r(y)=0$ for each $r>0$.
Then
$$
\Omega V\simeq \Omega S^{n-1}\times\Omega S^{n}.
$$
\end{theorem}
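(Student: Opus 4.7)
The plan is to construct a canonical map $\phi\colon\Omega S^{n-1}\times\Omega S^n\to\Omega V$ assembled from looped wedge inclusions and the loop multiplication, then show it induces an isomorphism on mod-$p$ homology, and finally upgrade to a homotopy equivalence via Whitehead's theorem.

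Since $\beta_r(y)=0$ for every $r>0$, the splitting~(\ref{ESplit}) lets us choose a representative of $[V]$ whose $(2n-2)$-skeleton is $\bar V=S^{n-1}\vee S^n$. Let $j_1\colon S^{n-1}\to V$ and $j_2\colon S^n\to V$ be the composites of the two wedge summand inclusions with $\bar V\hookrightarrow V$. Looping and composing with the loop multiplication $\mu$ on $\Omega V$, I define
$$
\phi\colon\Omega S^{n-1}\times\Omega S^n\xrightarrow{\Omega j_1\times\Omega j_2}\Omega V\times\Omega V\xrightarrow{\mu}\Omega V.
$$

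To verify that $\phi$ is a mod-$p$ homology isomorphism I would combine Corollary~\ref{C1} with the moreover clause of Theorem~\ref{P0}. Corollary~\ref{C1} gives a Hopf algebra identification $H_*(\Omega V;\zmodp)\cong T(u)\otimes T(v)$ with $|u|=n-2$ and $|v|=n-1$, while the moreover clause of Theorem~\ref{P0} tells us that the looped inclusion $\Omega\bar V\to\Omega V$ is modelled by the canonical quotient $T(\bar u_1,\bar u_2)\to T(u)\otimes T(v)$ sending the free generators to $u$ and $v$. Restricting to each wedge summand, $(\Omega j_1)_*$ sends the standard generator of $H_*(\Omega S^{n-1};\zmodp)=T(u)$ to $u\otimes 1$, and $(\Omega j_2)_*$ sends the standard generator of $H_*(\Omega S^n;\zmodp)=T(v)$ to $1\otimes v$. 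Because $(n-2)(n-1)$ is always even, $u$ and $v$ graded-commute in $T(u)\otimes T(v)$, so $\mu_*$ recovers the algebra structure of the target as the graded tensor product, and hence $\phi_*$ is the identity map of $T(u)\otimes T(v)$ under the K\"unneth identification.

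Finally, the three spaces are simply connected $p$-local spaces of finite type in the range of interest, so the $p$-local Whitehead theorem upgrades $\phi$ to a homotopy equivalence. I do not anticipate any serious obstacle: the core computation is already packaged into Corollary~\ref{C1} and the moreover clause of Theorem~\ref{P0}, and all that remains is the parity observation that guarantees commutativity of $u$ and $v$ in the target Hopf algebra.
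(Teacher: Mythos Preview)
Your proof is correct and follows essentially the same line as the paper's: both choose the representative with $\bar V=S^{n-1}\vee S^n$, build a map $\Omega S^{n-1}\times\Omega S^n\to\Omega V$ out of the looped wedge inclusions and the loop multiplication, and verify it is a mod-$p$ homology isomorphism using the Hopf algebra identification $H_*(\Omega V;\zmodp)\cong T(u)\otimes T(v)$ from Corollary~\ref{C1} (together with the description of $(\Omega i)_*$ from Theorem~\ref{P0}). The only cosmetic difference is that the paper phrases the construction as first taking the standard section $s\colon\Omega S^{n-1}\times\Omega S^n\to\Omega(S^{n-1}\vee S^n)$ and then composing with $\Omega i$, while you multiply directly in $\Omega V$; since $\Omega i$ is an $H$-map these are the same map. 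Two small remarks: the parity observation about $(n-2)(n-1)$ is not really needed, since $(u\otimes 1)(1\otimes v)=u\otimes v$ in $T(u)\otimes T(v)$ without any sign; and for $n=3$ the factor $\Omega S^{n-1}=\Omega S^2$ is not simply connected, so you should appeal to the nilpotent (or $H$-space) version of the $p$-local Whitehead theorem rather than simple connectivity.
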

\begin{proof}
We take $V\in[V]$ so that $\bar V=S^{n-1}\vee S^n$.
Recall for general spaces $X$ and $Y$, the looped inclusion 
$\seqm{\Omega(X\vee Y)}{}{\Omega (X\times Y)=\Omega X\times\Omega Y}$
has a right homotopy inverse.
Thus for $X=S^{n-1}$ and $Y=S^n$ we have a right homotopy inverse 
$$
\seqm{\Omega S^{n-1}\times \Omega S^n}{s}{\Omega(S^{n-1}\vee S^n)}.
$$
On mod-$p$ homology, $s_*$ is modelled by the inclusion of Hopf algebras
\seqm{T(u')\otimes T(v')}{}{T(u',v')}, where $|x'|=n-2$ and $|y'|=n-1$.
Since $\Omega(i')_*$ is an algebra map, it is clear that the composite
$$
\seqmm{\Omega S^{n-1}\times \Omega S^n}{s}{\Omega(S^{n-1}\vee S^n)}{i}{\Omega V}
$$
induces an isomorphism in mod-$p$ homology, and so is a homotopy equivalence.
\end{proof}

\section{Higher ranks}
\label{HighRank}

Throughout this section we fix some class $[W]\in\mathcal{T}^{p}_{k,2m}$ with $k\geq 2$, and $m>2$,
with all spaces localized at an odd prime $p$.
 
We recall the properties for $W$ described in Section~\ref{SModp}.
The generators $x_1,\ldots,x_k$ and $y_1,\ldots,y_k$ will denote the basis for $H_{2m-1}(W;\zmodp)$
and $H_{2m}(W;\zmodp)$ dual to the mod-$p$ cohomology basis that we gave earlier, 
while $u_1,\ldots,u_k\in H_{2m-2}(\Omega V;\zmodp)$ and $v_1,\ldots,v_k\in H_{2m-1}(\Omega V;\zmodp)$ 
will denote the transgressions of the $x_i$'s and $y_j$'s. We have $\beta_{r_i}(y_i)=x_i$
for some integers $r_1,\ldots,r_{k_1}$, and integer $0\leq k_1\leq k$.
For convenience we take $W\in[W]$ so that the homotopy equivalence in equation~(\ref{ESplit}),
corresponding to our choice of basis above, is a homeomorphism.

Recall the $k\times k$ \zmodp-matrix $A_{z^*}=(a_{ij})$ associated with the cup product 
structure of $H^*(W;\zmodp)$ with respect to our choice of basis. We have $A_{z^*}$ is nonsingular. 
By Proposition~\ref{P1} the $k_1\times k_1$ matrix $B_{z^*}$ in the block
partition of $A_{z^*}$ (equation~\ref{EMatrix}) is symmetric, and the $k_2\times k_1$
matrix $C_{z^*}$ is zero. In paricular $a_{ij}=0$ for $k_1<i\leq k$, 
and $a_{ij}=a_{ji}$ whenever $1\leq i\leq k_1$. 

Let us assume $k_1\geq1$ for now.
We may as well assume our mod-$p$ homology basis has been ordered so that
$$
r_1=\max\paren{r_1,\ldots, r_{k_1}}.
$$ 
Since $A_{z^*}$ is nonsingular, there must exist an integer $i>1$ such that $a_{i1}\neq0$ 
whenever $a_{11}=0$. 
If this is the case, for convenience we assume our mod-$p$ homology basis 
corresponding to the splitting of $\bar W$ has been ordered so that $i=2$.

We will construct a certain map 
\begin{equation}
\label{EMainMap}
q\colon\seqm{W}{}{V} 
\end{equation}
which will be used in the upcoming proofs.
Here $[V]\in\mathcal{T}^{p}_{1,2m}$ with $\bar V=\M{2m}{r_1}$, and $q_*$ is nonzero in degree $2n-1$, 
and is nonzero for some choice of degree $2m-1$ and degree $2m$ generators. 
The restrictions on the matrix $A_{z^*}$ mentioned above will be necessary in order for $q$ to exist in general.
A similar map is constructed for the special case $k_1=0$.
This construction of will depend on a few seperate cases, again assuming $k_1\geq 1$:
\medskip
\begin{itemize}
\item[(1)] $a_{11}\neq0$.
\item[(2)] $a_{11}=0$: 
Since $A_{z^*}$ is nonsingular, there is an integer $i>1$ such that $a_{i1}\neq0$. 
We must have $i\leq k_1$, because $a_{ij}=0$ when $i>k_1$.
So $i$ corresponds to a Moore space \M{2m}{r_i} in the splitting of $\bar W$.
We consider three subcases:
\begin{itemize}
\item[(a)] $r_1=r_2$ and $a_{22}\neq 0$;
\item[(b)] $r_1=r_2$ and $a_{22}=0$;
\item[(c)] $r_1>r_2$.
\end{itemize}  
\end{itemize}

If the first case holds, let $\hat W=\bar W/\M{2m}{r_1}$. One may notice that the quotient 
$V=W/\hat W,$ which extends the quotient $\bar W / \hat W=\M{2m}{r_1}$, 
has its homotopy type in $\mathcal{T}^{p}_{1,2m}$.
Otherwise when part $(a)$ of the second case holds, let us fix 
$\hat W=\bar W/\M{2m}{r_2}$ and $V=W/\hat W$. 
In either case we set \seqm{W}{q}{V} as the respective quotient map.

Now consider parts $(b)$ and $(c)$ of the second case. $A_{z^*}$ being symmetric implies $a_{21}=a_{12}$. 
Setting $\hat W=\bar W/(\M{2m}{r_1}\vee \M{2m}{r_2})$, 
let $V'$ denote the quotient $W/\hat W$ and $\seqm{W}{q'}{V'}$ the corresponding quotient map. 
Set $t=r_1-r_2\geq 0$, and take the map \seqm{\M{2m}{r_2}}{\zeta}{\M{2m}{r_1}} as the induced map of cofibers
in the cofibration diagram
\[\diagram
S^{2m-1}\rto^{p^{r_2}}\ddouble & S^{2m-1}\rto^{}\dto^{p^{t}} & \M{2m}{r_2}\dto^{\zeta}\\
S^{2m-1}\rto^{p^{r_1}} & S^{2m-1}\rto^{} & \M{2m}{r_1}.
\enddiagram\]
Let $V$ be the pushout in the pushout diagram
\[\diagram
\M{2m}{r_1}\vee\M{2m}{r_2}\dto^{\mathbbm{1}\vee\zeta}\rto^{} & V'\dto^{}\\
\M{2m}{r_1}\rto^{} & V,
\enddiagram\]
where the horizontal maps are inclusions. Let $q$ be the composite 
$$
q\colon\seqmm{W}{q'}{V'}{}{V}.
$$
Notice $q$ extends the composite 
$$
\seqmm{\bar W}{}{\bar W/\hat W=\M{2m}{r_1}\vee \M{2m}{r_2}}{\mathbbm{1}\vee\zeta}{\M{2m}{r_1}}, 
$$
and \seqm{H^n(\M{2m}{r_2};\zmodp)}{\zeta^*}{H^n(\M{2m}{r_1};\zmodp)} is an isomorphism when $n=2m$, and multiplication by $p^t$ when
$n=2m-1$ (hence trivial when $t>0$). Thus 
$$
q^*(x^*)=x_{1}^*+p^{t}x_{2}^*,
$$ 
and
$$
q^*(y^*)=y_{1}^*+y_{2}^*
$$ 
for some generators $x^*$ and $y^*$ in $H^{2m-1}(V;\zmodp)$ and $H^{2m}(V;\zmodp)$. For part $(c)$, when $t=r_1-r_2>0$, we have
$$
q^*(x^*y^*)=(x_{1}^*+p^{t}x_{2}^*)(y_{1}^*+y_{2}^*)=(a_{11}+a_{21}+p^{t}a_{12}+p^{t}a_{22})z^*=a_{21} z^*.
$$
Therefore $x^*y^*=a_{21} e^*$ for some generator $e\in H^{4m-1}(V;\zmodp)\cong\zmodp$. Since we are assuming $a_{21}\neq 0$,
the homotopy type of $V$ is in $\mathcal{T}^{p}_{1,2m}$. For part $(b)$, when $t=r_1-r_2=0$ and $a_{22}= 0$,
$$
q^*(x^*y^*)=(a_{21}+a_{12}) z^*=2(a_{21}) z^*,
$$
and so the homotopy type of $V$ is in $\mathcal{T}^{p}_{1,2m}$ for this case as well.

Finally we consider the construction of the map \seqm{W}{q}{V} for the case $k_1=0$.
This time $[V]\in\mathcal{T}^{p}_{k,2m}$ satisfies $\bar V=S^{n-1}\vee S^n$.
The construction is straightforward. 
The nonsingular $A_{z^*}$ must have $a_{i1}\neq 0$ for some $i$. 
Assume our basis has been ordered so that $i=1$. 
Let $\Hat W=\bar W/(S^{n-1}\vee S^n)$, 
where the spheres $S^{n-1}$ and $S^n$ in the splitting of $\bar W$
correspond to the generators $x_1$ and $y_1$. 
Now let $V=W/\Hat W$, and $q$ be the corresponding quotient map.

We shall let $x^*\in H^{2m-1}(\bar V;\zmodp)=H^{2m-1}(V;\zmodp)$ and $y^*\in H^{2m}(\bar V;\zmodp)=H^{2m}(V;\zmodp)$ 
be generators with $\beta_r(x^*)=y^*$, $x\in H_{2m-1}(\bar V;\zmodp)=H_{2m-1}(V;\zmodp)$ 
and $y\in H_{2m}(\bar V;\zmodp)=H_{2m}(V;\zmodp)$ be their homology duals, and $u\in H^{2m-2}(\Omega\bar V;\zmodp)$ and 
$v\in H^{2m-1}(\Omega\bar V;\zmodp)$ be the transgressions of $x$ and $y$.

The following lemma can be viewed as an extension of Lemma~\ref{L2}.

\begin{lemma}
\label{L3}
Let $k_1\geq 1$. There exists a map \seqm{S^{4m-1}}{\bar f}{\Omega W} such that the composite
$$
\seqmm{S^{4m-1}}{\bar f}{\Omega W}{\Omega q}{\Omega V}
$$
induces a map sending a generator $\iota_{2m-1}\in H_*(S^{4m-1};\zmodp)$ to $v^2\in H_*(\Omega V;\zmodp)$.
\end{lemma}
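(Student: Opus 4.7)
The plan is to adapt the construction in the proof of Lemma~\ref{L2} from the rank-one complex $V$ to $W$, and verify that the resulting spherical class in $\Omega W$ is carried to $v^{2}$ in $\Omega V$ by $\Omega q$. (In what follows I treat the source of $\bar f$ as the sphere $S^{4m-2}$ matching the degree $|v^{2}|=4m-2$.) First I let $\alpha_{W}\colon S^{4m-2}\to\bar W$ denote the attaching map of the top cell of $W$. Because $\bar W$ decomposes via~(\ref{ESplit}) as a wedge of Moore spaces $\M{2m}{r_{i}}$ with $r_{i}\leq r_{1}$ together with spheres $S^{2m-1}\vee S^{2m}$, an exponent estimate in the spirit of Proposition~\ref{Lexp} (augmented by the standard odd-primary exponent bounds on sphere summands in this range) shows that $p^{r_{1}}$ annihilates $[\alpha_{W}]\in\pi_{4m-2}(\bar W)$. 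Hence $\alpha_{W}$ extends over the cofibre sequence $S^{4m-2}\xrightarrow{p^{r_{1}}}S^{4m-2}\to\M{4m-1}{r_{1}}$ to a map $\bar\alpha_{W}\colon\M{4m-1}{r_{1}}\to\bar W$.

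Next I would take the adjoint $\bar\alpha_{W}'\colon \M{4m-2}{r_{1}}\to\Omega\bar W$ and compose with the looped inclusion $\Omega\iota_{W}\colon\Omega\bar W\to\Omega W$ to obtain $f_{W}\colon\M{4m-2}{r_{1}}\to\Omega W$. The restriction of $f_{W}$ to the bottom sphere $S^{4m-3}$ is the adjoint of $\iota_{W}\circ\alpha_{W}$, which is null by definition of the cofibration $\bar W\to W$, so $f_{W}$ factors through the pinch map as $\seqmm{\M{4m-2}{r_{1}}}{}{S^{4m-2}}{\bar f}{\Omega W}$. This $\bar f$ is the candidate map. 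To identify $\Omega q\circ\bar f$, I observe that $q\circ\iota_{W}=\iota_{V}\circ\bar q$ for the restriction $\bar q\colon\bar W\to\bar V=\M{2m}{r_{1}}$, so
\[\Omega q\circ f_{W}=\Omega\iota_{V}\circ(\bar q\circ\bar\alpha_{W})'.\]
The map $\bar q\circ\bar\alpha_{W}\colon\M{4m-1}{r_{1}}\to\M{2m}{r_{1}}$ extends $\bar q\circ\alpha_{W}=\alpha_{V}$, the attaching map of $V$, so it plays for $V$ exactly the role played by $\bar\alpha$ in the proof of Lemma~\ref{L2}. That proof then shows the top homology generator $v'\in H_{4m-2}(\M{4m-2}{r_{1}};\zmodp)$ is sent to $v^{2}\in H_{4m-2}(\Omega V;\zmodp)$. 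Restricting to the bottom sphere, $\Omega q\circ\bar f$ carries $\iota_{4m-2}$ to $v^{2}$, as required.

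The main obstacle is the case-by-case verification in Step~2 using the construction of $q$ in Section~\ref{HighRank}. In cases (1) and (2a) the map $\bar q$ is simply a projection onto a Moore summand of $\bar W$ and there is nothing to do. In cases (2b) and (2c) $\bar q$ factors through the pushout map $\mathbbm{1}\vee\zeta\colon\M{2m}{r_{1}}\vee\M{2m}{r_{2}}\to\M{2m}{r_{1}}$, with $\zeta$ induced on cofibres by multiplication by $p^{r_{1}-r_{2}}$; one must check that the component of $\alpha_{W}$ lying in the $\M{2m}{r_{2}}$-summand, after extension and composition with $\zeta$, combines with the $\M{2m}{r_{1}}$-component to give a bona fide extension of $\alpha_{V}$. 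This is bookkeeping about how the Moore-space multiplication and the $p^{r_{1}-r_{2}}$ factor interact with the chosen extension, but no deep new ingredient is needed beyond Lemma~\ref{L2} and the constructions already in place.
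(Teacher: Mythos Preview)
Your argument is correct and follows the paper's proof essentially line for line: extend the attaching map $\xi$ of $W$ over $\M{4m-1}{r_1}$ via Proposition~\ref{Lexp}, take adjoints, compose with the looped inclusion into $\Omega W$, factor through $S^{4m-2}$ (since the restriction to $S^{4m-3}$ is null), and identify the image in $\Omega V$ via the Bockstein computation from Lemma~\ref{L2} and Corollary~\ref{C2}. Your closing worry about a case-by-case verification in cases~(2b)/(2c) is unnecessary: in every case $q$ was constructed as an extension of some $\bar q\colon\bar W\to\bar V$, so there is a commuting diagram of cofibrations giving $\bar q\circ\xi=\alpha_V$ on the nose, and hence $\bar q\circ\bar\alpha_W$ is automatically an extension of $\alpha_V$ with no bookkeeping about $\zeta$ or Moore-space components required.
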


\begin{proof}
Let \seqm{S^{4m-2}}{\alpha}{\M{2m}{r_1}} be the attaching map for the $(4m-1)$-cell of $V$, 
and \seqm{S^{4m-2}}{\xi}{\bar W} the attaching map for the $(4m-1)$-cell of $W$. 
Notice the map \seqm{W}{q}{V} is the extension of a map \seqm{\bar W}{\bar q}{\M{2m}{r_i}} fitting in a diagram of 
cofibration sequences
\[\diagram
S^{4m-2}\rto^{\xi}\ddouble & \bar W \dto^{\bar q}\rto^{i_W} & W\dto^{q}\\
S^{4m-2}\rto^{\alpha} & \M{2m}{r_1}\rto^{i_V} & V.
\enddiagram\]
Proposition~\ref{Lexp} implies $[\xi]$ has order $p^{r_1}$ in $\pi_{4m-2}(\bar W)$, since $r_1=\max\paren{r_1,\ldots, r_k}$. 
Thus $\xi$ extends to a map \seqm{\M{4m-1}{r_1}}{\bar\xi}{\bar W}. 

Let \seqm{\M{4m-2}{r_1}}{\bar\xi'}{\Omega\bar W} denote the adjoint of $\bar\xi$. Let $u'\in H_{4m-3}(\M{4m-2}{r_1};\zmodp)$ 
and $v'\in H_{4m-2}(\M{4m-2}{r_1};\zmodp)$ be generators satisfying $\beta_r(v')=u'$. By the above diagram of
cofibrations, $\Omega\bar q\circ\bar\xi'$ restricted to $S^{4m-3}$ is the adjoint of $\alpha$, so Corollary~\ref{C1}
implies
$$
(\Omega\bar q\circ\bar\xi')_*(u')=[u,v]
$$
for some choice of our generator $u'$. 

When $m>2$, $H_{4m-2}(\Omega \M{2m}{r_1};\zmodp)$ is $1$-dimensional vector space with a basis given by $v^2$. 
Thus $(\Omega\bar q\circ\bar\xi')_*(v')=kv^2$ for some $k$, and 
$$
k[u,v]=\beta_{r_1}(kv^2)=\beta_{r_1}((\Omega\bar q\circ\bar\xi')_*(v'))
=(\Omega\bar q\circ\bar\xi')_*(\beta_{r_1}(v'))=(\Omega\bar q\circ\bar\xi')_*(u')=[u,v],
$$
so $k=1$. Therefore 
$$
(\Omega\bar q\circ\bar\xi')_*(v')=v^2.
$$ 
Consider the composite
$$
f\colon\seqmm{\M{4m-2}{r_1}}{\bar\xi'}{\Omega\bar W}{\Omega i_W}{\Omega W}.
$$
Now $\Omega q\circ f$ is homotopic to $\Omega i_V \circ\Omega\bar q\circ\bar\xi'$, and since 
$H_*(\Omega V;\zmodp)\cong T(u)\otimes T(v)$ such that $H$-map \seqm{\Omega\M{2m}{r_1}}{\Omega i_V}{\Omega V} induces a map on mod-$p$ homology
modelled by the algebra map \seqm{T(u,v)}{}{T(u)\otimes T(v)},
$$
(\Omega q\circ f)_*(v')= (\Omega i_V)_*\circ(\Omega\bar q\circ\bar\xi')_*(v')=(\Omega i_V)_*(v^2)=v^2.
$$ 
Notice $f$ factors through the quotient map \seqm{\M{4m-2}{r_1}}{}{S^{4m-2}}, as the restriction of $f$ to the bottom sphere 
$S^{4m-3}$ is null homotopic, since it is the adjoint of the (null homotopic) composite \seqmm{S^{4m-2}}{\xi}{\bar W}{i_W}{W}. 
Thus $f$ extends to a map \seqm{S^{4m-2}}{\bar f}{\Omega W} so that $\bar f_*(\iota_{4m-2})=f_*(v')$. Therefore
$$
(\Omega q\circ \bar f)_*(\iota_{4m-2})=(\Omega q\circ f)_*(v')=v^2.
$$
This completes the proof.
\end{proof}

\begin{corollary}
\label{C3}
The map \seqm{\Omega W}{\Omega q}{\Omega V} has a right homotopy inverse. 
\end{corollary}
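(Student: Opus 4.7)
The strategy is to use the product decomposition of $\Omega V$ from Theorem~\ref{T1} (case $k_1\geq 1$) or Theorem~\ref{T1b} (case $k_1=0$), construct a candidate right homotopy inverse of $\Omega q$ on each factor separately, and combine them via the $H$-space multiplication on $\Omega W$. I describe the case $k_1\geq 1$; the case $k_1=0$ is analogous, with Theorem~\ref{T1b} and the splitting $\Omega S^{2m-1}\times\Omega S^{2m}\to\Omega(S^{2m-1}\vee S^{2m})$ replacing Theorem~\ref{T1} and $h$, and no analogue of Lemma~\ref{L3} being needed since no $\Omega S^{4m-1}$ factor appears.

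For the Moore space factor, one inspects each of the four subcases (1), (2a), (2b), (2c) of the construction of $q$ in Section~\ref{HighRank} and checks that $\bar q\colon\bar W\to\bar V=\M{2m}{r_1}$ restricts to the identity on one of the first two Moore summands in the splitting~(\ref{ESplit}) of $\bar W$: the first summand in cases (1), (2b), (2c) (in (2b) and (2c) because the pushout map $\mathbbm{1}\vee\zeta$ is the identity on the first wedge factor), and the second summand in case (2a) where $r_2=r_1$. Writing $j$ for the corresponding wedge inclusion, set
$$
\mu_1\colon\seqmmm{\MS{2m-1}{r_1}}{h}{\Omega\M{2m}{r_1}}{\Omega j}{\Omega\bar W}{\Omega i_W}{\Omega W}.
$$
Using $q\circ i_W=i_V\circ\bar q$ and $\bar q\circ j\simeq\mathrm{id}$, one obtains $\Omega q\circ\mu_1\simeq\Omega i_V\circ h=\phi$, the map from the proof of Theorem~\ref{T1}.

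For the $\Omega S^{4m-1}$ factor, Lemma~\ref{L3} supplies a map $\bar f\colon S^{4m-2}\to\Omega W$ whose post-composition with $\Omega q$ represents the spherical class $v^2\in H_{4m-2}(\Omega V;\zmodp)$. Extending $\bar f$ by the universal property of the James construction on $\Omega S^{4m-1}$ (possible since $\Omega W$ is an associative $H$-space) yields an $H$-map $\mu_2\colon\Omega S^{4m-1}\to\Omega W$, and both $\Omega q\circ\mu_2$ and the $H$-map $\theta$ from the proof of Theorem~\ref{T1} are $H$-maps from $\Omega S^{4m-1}$ extending the same class $v^2\in\pi_{4m-2}(\Omega V)$, so they coincide up to homotopy by the same universal property.

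Finally, define $s$ as the composite
$$
\seqmmm{\Omega V}{\simeq}{\MS{2m-1}{r_1}\times\Omega S^{4m-1}}{\mu_1\times\mu_2}{\Omega W\times\Omega W}{\mathrm{mult}}{\Omega W},
$$
where the first arrow is a homotopy inverse of the equivalence of Theorem~\ref{T1}. Because $\Omega q$ is an $H$-map, post-composing with $\Omega q$ converts the last two arrows into $(\phi\times\theta)$ followed by the multiplication on $\Omega V$, which is precisely the equivalence of Theorem~\ref{T1}; combined with its homotopy inverse on the left, this yields $\Omega q\circ s\simeq\mathrm{id}_{\Omega V}$. The main technical point is the case-by-case identification of the inclusion $j$ and the verification that $\bar q\circ j\simeq\mathrm{id}$; the rest is a formal combination of Theorem~\ref{T1}, Lemma~\ref{L3}, and the $H$-map property of $\Omega q$.
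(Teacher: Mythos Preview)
Your approach is essentially the paper's: pick an inclusion $j$ of a Moore summand on which $\bar q$ is the identity, lift the $\MS{2m-1}{r_1}$ factor through $\Omega j$ via $h$, lift the $\Omega S^{4m-1}$ factor through Lemma~\ref{L3} and the James extension, and multiply. Your case-by-case check that such a $j$ exists is correct and is exactly what the paper has in mind when it says ``depending on our construction of the map $q$ \ldots we can take an inclusion $j$''.

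There is one small gap in your final identification. You assert that $\Omega q\circ\mu_2$ and the $\theta$ from Theorem~\ref{T1} ``extend the same class $v^2\in\pi_{4m-2}(\Omega V)$'' and hence coincide by the James universal property. But $v^2$ is a \emph{homology} class; what you actually know from Lemma~\ref{L3} and Lemma~\ref{L2} is that the two maps $S^{4m-2}\to\Omega V$ have the same Hurewicz image, not that they represent the same element of $\pi_{4m-2}(\Omega V)$. Since $\pi_{4m-2}(\Omega V)\cong\pi_{4m-2}(\MS{2m-1}{r_1})\oplus\mathbb{Z}_{(p)}$ and the first summand need not vanish, this is not automatic, so you cannot conclude $\Omega q\circ s\simeq\mathrm{id}_{\Omega V}$ on the nose.

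The paper sidesteps this: it does not try to match $\Omega q\circ\mu_2$ with $\theta$, but simply observes that the composite
\[
\MS{2m-1}{r_1}\times\Omega S^{4m-1}\xrightarrow{\mu_1\times\mu_2}\Omega W\times\Omega W\xrightarrow{\mathrm{mult.}}\Omega W\xrightarrow{\Omega q}\Omega V
\]
induces an isomorphism on mod-$p$ homology (the image of $(\Omega q\circ\mu_1)_*$ is the $T(u)$-submodule on $\{1,v\}$, and $(\Omega q\circ\mu_2)_*$ hits the subalgebra generated by $v^2$; together these generate $T(u)\otimes T(v)$). Hence this composite is a homotopy equivalence, which already forces $\Omega q$ to admit a right homotopy inverse. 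Your argument is easily repaired the same way: drop the claim $\Omega q\circ s\simeq\mathrm{id}$ and instead conclude that $\Omega q\circ s$ is a self-equivalence of $\Omega V$.
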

\begin{proof}
Assume $k_1\geq 1$ for now. By Theorem~\ref{P0} there is a Hopf algebra isomorphism
$$
H_*(\Omega V;\zmodp)\cong T(u)\otimes T(v),
$$ 
and the looped inclusion \seqm{\Omega\M{2m}{r_1}}{\Omega i_V}{\Omega V} induces a map on mod-$p$ homology
modelled by the algebra map \seqm{T(u,v)}{}{T(u)\otimes T(v)} sending $u$ to $u$ and $v$ to $v$.
Dependning on our construction of the map \seqm{W}{q}{V} at the start of this section, we can 
take an inclusion \seqm{\M{2m}{r_1}}{j}{W} such that the composite \seqmm{\M{2m}{r_1}}{j}{W}{q}{V}
is homotopic to the inclusion $i_V$. Now consider the composite
$$
\phi\colon\seqmm{\MS{2m-1}{r_1}}{h}{\Omega\M{2m}{r_1}}{\Omega j}{\Omega W}.
$$
The map $h_*$ is modelled by taking $H_*(\MS{2m-1}{r_1};\zmodp)$ isomorphically onto the left $T(u)$-submodule of 
$T(u,v)$ with basis \paren{1,v}, so $(\Omega q\circ\phi)_*$ is modelled by an isomorpism onto the left $T(u)$-submodule of 
$T(u)\otimes T(v)$ with basis \paren{1,v}.

From Lemma~\ref{L3} one has a map \seqm{S^{4m-2}}{\bar f}{\Omega W} satisfying
$(\Omega q\circ \bar f)_*(\iota_{4m-2})=v^{2}\in H_*(\Omega V;\zmodp)$.
Since $\Omega W$ is an $H$-space, by the universal property of the James construction for $\Omega S^{4m-1}$
$\bar f$ extends to an $H$-map
$$
\tilde f\colon\seqm{\Omega S^{4m-1}}{}{\Omega W},
$$
and $(\Omega q\circ\tilde f)_*$ induces an isomorphism onto the subalgebra of $H_*(\Omega V;\zmodp)$ generated by $v^2$.

One now sees that the product
$$
\seqmmm{\MS{2m-1}{r_1}\times\Omega S^{4m-1}}{\phi\times\tilde f}{\Omega W\times\Omega W}
{\Omega q\times\Omega q}{\Omega V\times\Omega V}{mult.}{\Omega V}
$$
induces an isomorphism on mod-$p$ homology, and therefore is a homotopy equivalence. 
Since $\Omega q$ is an $H$-map, this homotopy equivalence is homotopic to the composite
\begin{equation}
\label{EHomotopyEquiv}
\seqmmm{\MS{2m-1}{r_1}\times\Omega S^{4m-1}}{\phi\times\tilde f}{\Omega W\times\Omega W}
{mult.}{\Omega W}{\Omega q}{\Omega V},
\end{equation}
and so $\Omega q$ has a right homotopy inverse. 

Now consider $k_1=0$. 
Let \seqm{S^{2m-1}\vee S^{2m}}{}{W} be the inclusion inducing an isomorphism 
on mod-$p$ homology onto the subgroups generated by $x_1$ and $y_1$. Then the composite
$$
i'\colon\seqmm{S^{2m-1}\vee S^{2m}}{}{W}{q}{V}
$$
is the inclusion of the $(4m-2)$-skeleton of $V$. 
As we saw in the proof of Lemma~\ref{T1b}, 
$\Omega i'$ has a right homotopy inverse, and we are done.

\end{proof}

It will be very covenient to make a change in basis in the proof of the next lemma.  
Depending on which of the four cases the matrix $A_{z^*}$ satisfies, as described at the start of this section, 
we change our basis \zmodp\paren{x_1,x_2,x_3,\ldots x_k} and \zmodp\paren{y_1,y_2,y_3,\ldots y_k} to 
\zmodp\paren{a_1,a_2,a_3,\ldots a_k} and \zmodp\paren{b_1,b_2,b_3,\ldots b_k} 
so that for $i\geq 2$ the following are satisfied:
$$
q_*(a_1)=x, q_*(b_1)=y; \beta_{r_1}(b_1)=a_1\mbox{ if }1\leq k_1;
$$
$$
q_*(a_i)=0, q_*(b_i)=0; \beta_{r_i}(b_i)=a_i\mbox{ if }i\leq k_1;
$$ 
$$
a_1^*b_1^*=cz^*\in H^{4m-1}(W;\zmodp);
$$ 
for some integer $c$ prime to $p$. 

Since $q_*(x_i)=0$ and $q_*(y_i)=0$ when $i>2$, we can set $a_i=x_i$ and $b_i=y_i$. 
When the first case is satisfied, or when $k_1=0$, we may leave our previous basis as it was. 
For parts $(a)$, $(b)$, and $(c)$ of the second case, by inspection we can set: 
$a_1=x_2$, $b_1=y_2$, $a_2=x_1$, and $b_2=y_1$; 
$a_1=\frac{1}{2}(x_1+x_2)$, $b_1=\frac{1}{2}(y_1+y_2)$, $a_2=x_1-x_2$, and $b_2=y_1-y_2$; 
$a_1=x_1$, $b_1=y_1+y_2$, $a_2=x_1-x_2$, and $b_2=-y_2$, respectively.

Let $F$ be the homotopy fiber of \seqm{W}{q}{V},
and
\begin{equation}
\label{EPrincipalFib}
\seqmm{\Omega V}{\delta}{F}{}{W} 
\end{equation}
be the induced principal homotopy fibration sequence,
meaning there exists a left action 
$$
\mu\colon\seqm{\Omega V\times F}{}{F}
$$
such that the following diagram commutes up to homotopy
\begin{equation}
\label{DAction}
\diagram
\Omega V\times \Omega V\rto^{\mathbbm{1}\times\delta}\dto^{mult.} & \Omega V\times F\dto^{\mu}\\
\Omega V\rto^{\delta} & F. 
\enddiagram
\end{equation}

\begin{lemma}
\label{L4}
There is isomorphism of left $H_{*}(\Omega V;\zmodp)$-modules
$$
H_*(F;\zmodp)\cong \zmodp\paren{a_i,b_i|2\leq i\leq k}\otimes H_*(\Omega V;\zmodp),
$$
where $|a_i|=2m-1$, $|b_i|=2m$, $\beta_{r_i}(b_i)=a_i$ when $i\leq k_1$, 
and the left action of $H_{*}(\Omega V;\zmodp)$ is induced by $\mu$. 
\end{lemma}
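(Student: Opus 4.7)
The plan is to compute $H_*(F;\zmodp)$ via the Serre spectral sequence of the principal fibration $\Omega V\to F\to W$, whose $E^2$-page is $E^2_{p,q}=H_p(W;\zmodp)\otimes H_q(\Omega V;\zmodp)$ and which carries the structure of a spectral sequence of left $H_*(\Omega V;\zmodp)$-modules via the action $\mu$; in particular the transgressions $d^r(w\otimes 1)$ determine all other differentials by $H_*(\Omega V;\zmodp)$-linearity, since $d^r(1\otimes y)=0$.

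First I would observe that by Corollary~\ref{C3}, $\Omega q$ admits a right homotopy inverse $s$, so in the fibration sequence \seqmm{\Omega W}{\Omega q}{\Omega V}{\delta}{F} we have $\delta\simeq\delta\circ\Omega q\circ s\simeq\ast$, hence $\delta_*=0$ and $E^\infty_{0,q}=0$ for $q>0$: every positive-degree class in column zero must be killed by a differential. Next I would compare with the path fibration $\Omega V\to PV\to V$ through the map of fibrations induced by $q$. Naturality together with Proposition~\ref{L1a} applied to $V$ yields $d^{2m-1}(a_1\otimes 1)=1\otimes u$, $d^{2m}(b_1\otimes 1)=1\otimes v$, and $d^{2m-1}(z_W\otimes 1)\in E^{2m-1}_{2m,2m-2}$ equal to $-c(b_1\otimes u)$ (for some $c$ prime to $p$) plus a correction $\sum_{i\geq 2}\epsilon_i(b_i\otimes u)\in\ker q_*$. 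For $i\geq 2$, since $q_*(a_i)=q_*(b_i)=0$, the transgressions $d^r(a_i\otimes 1)$ and $d^r(b_i\otimes 1)$ lie in $\ker q_*$ on $E^r_{0,*}$; an induction on the spectral sequence page shows $q_*\colon E^r_{0,*}(F)\to E^r_{0,*}(PV)$ is injective because the images of differentials in column zero agree in the two spectral sequences (being built only from the transgressions of $a_1,b_1,z_W$), and so these transgressions themselves vanish.

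Combining these computations, the $E^\infty$-page is concentrated in columns $2m-1$ and $2m$ (together with $E^\infty_{0,0}=\zmodp$), with total dimension in each degree matching that of the proposed module $\zmodp\paren{a_i,b_i|2\leq i\leq k}\otimes H_*(\Omega V;\zmodp)$. To lift this to an actual left $H_*(\Omega V;\zmodp)$-module isomorphism, I would construct explicit maps from wedges of Moore spaces and spheres into $F$ detecting the classes $a_i,b_i$ for $i\geq 2$: in cases~(1) and~(2)(a) of Section~\ref{HighRank}, the subcomplex $\hat W\subset W$ composes null-homotopically with $q$ and lifts to a map $\hat W\to F$; in cases~(2)(b) and~(2)(c), analogous ``difference'' maps such as $j_1-\zeta\circ j_2\colon\M{2m}{r_1}\to W$ have null composition with $q$ and furnish the remaining lifts. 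Combining these lifts with $\mu$ yields a map whose mod-$p$ homology realizes the claimed module isomorphism, and the Bockstein relations $\beta_{r_i}(b_i)=a_i$ for $i\leq k_1$ are then inherited from those in $W$ by naturality of the Bocksteins.

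The main obstacle is controlling the transgression of $z_W$: the correction terms $\sum_{i\geq 2}\epsilon_i(b_i\otimes u)$ may require a further change of basis among the $b_i$ (to absorb these corrections into a new basis of the degree-$2m$ generators) or a more careful choice of lifts into $F$. Reconciling the basis chosen in Section~\ref{HighRank}, which was designed to align the $q$-image and the Bockstein structure, with the basis dictated by the $z_W$-transgression, while preserving both freeness of the module structure and the Bockstein relations $\beta_{r_i}(b_i)=a_i$, is what will demand the most care.
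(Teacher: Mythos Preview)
Your overall approach---the Serre spectral sequence of the principal fibration $\Omega V\to F\to W$ as a spectral sequence of left $H_*(\Omega V;\zmodp)$-modules, with transgressions determined by naturality under $q$---is exactly what the paper does. Your identification of the transgressions $d^{2m-1}(a_1\otimes 1)=1\otimes u$, $d^{2m}(b_1\otimes 1)=1\otimes v$, and the vanishing for $i\geq 2$, also matches. Two points, however, diverge from the paper.

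First, the explicit lifts you propose ($\hat W\to F$ and ``difference'' maps in cases (2)(b),(c)) are unnecessary here. The paper simply reads the module isomorphism off $E^\infty$: the $H_*(\Omega V;\zmodp)$-action is entirely in the fiber direction and hence already visible at $E^\infty$, so once $E^\infty$ is computed to be free on $\{a_i,b_i:2\leq i\leq k\}$ there is nothing more to do. Geometric lifts are postponed to the proof of Theorem~\ref{T2}. (Your shortcut via Corollary~\ref{C3} to get $E^\infty_{0,*}=0$ is valid but the paper instead computes directly that $d^{2m-1}(a_1\otimes g)=1\otimes gu$ and $d^{2m}(b_1\otimes v^{l-1})=1\otimes v^l$ kill all of column zero; this direct computation is then reused below.)

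Second, your ``main obstacle'' is not an obstacle but the mechanism that makes the computation close. The paper notes that $d^{2m}(b_1\otimes gu)=1\otimes guv=0$ in $E^{2m}$, since $1\otimes gvu$ was already killed by $d^{2m-1}(a_1\otimes gv)$. So the kernel of $d^{2m}$ on column $2m$ is generated by $\{b_i\otimes g:i>1\}$ together with the extra elements $\{b_1\otimes gu\}$. The differential $d^{2m-1}(z\otimes g)=\sum_i c_i(b_i\otimes gu)$ with $c_1\neq 0$ then identifies each $b_1\otimes gu$ with a linear combination of $b_i\otimes gu$ for $i>1$ in $E^{2m}$. Hence $E^\infty$ in column $2m$ is freely generated by $\{b_i:i>1\}$, with no change of basis needed: the ``correction terms'' $c_i$ for $i\geq 2$ do not spoil freeness, and the Bockstein relations are inherited by naturality as you say.
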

\begin{proof}
Recall the Hopf algebra isomorphism $H_*(\Omega V;\zmodp)\cong T(u)\otimes T(v)$ from Theorem~\ref{P0}.
That is, $H_*(\Omega V;\zmodp)$ is the polynomial algebra over $\zmodp$ generated by $u$ and $v$,
where $u,v\in H_{*}(\Omega V;\zmodp)$ are the transgressions of $x,y\in H_{*}(V;\zmodp)$
in the mod-$p$ homology Serre spectral sequence for the path fibration of $V$.

The mod-$p$ homology spectral sequence $E$ for the principal homotopy fibration 
\seqmm{\Omega V}{\delta}{F}{}{W} is a spectral sequence of left $H_*(\Omega V;\zmodp)$-modules, with  
$$
E^{2}_{*,*}\cong H_*(W;\zmodp)\otimes H_*(\Omega V;\zmodp),
$$
and left action induced by $\mu$.
Differentials commute with the left action of $H_*(\Omega V;\zmodp)$, that is 
$d^n(f\otimes gh)=(1\otimes g)d^n(f\otimes h)$ whenever it makes sense.

Notice the generators $a_i\otimes 1,b_i\otimes 1\in E^{2}_{*,0}$ are transgressive
since $\bar W$ is a suspension. 
Since $q_*(a_i)=q_*(x_i)=0$ and $q_*(b_i)=q_*(y_i)=0$ for $i>2$, 
and likewise $q_*(a_2)=0$ and $q_*(b_2)=0$,
$$
d^{2m-1}(a_i\otimes g)=0, d^{2m}(b_i\otimes g)=0
$$
for every $g\in H_*(\Omega V;\zmodp)$ and $i>1$. 
Since $q_*(a_1)=x$ and $q_*(b_1)=y$, and since $u,v\in H_{*}(\Omega V;\zmodp)$ are
the transgressions of $x,y\in H_{*}(V;\zmodp)$, 
$$
d^{2m-1}(a_1\otimes 1)=1\otimes u, d^{2m}(b_1\otimes 1)=1\otimes v.
$$
Notice $H_*(\Omega V;\zmodp)$ is generated by elements of the form $v^l$ or $ug$ 
for monomials $g\in H_*(\Omega V;\zmodp)$. Since
$$
d^{2m-1}(a_1\otimes g)= (1\otimes g)d^{2m-1}(a_1\otimes 1)= (1\otimes g)(1\otimes u) = 1\otimes gu,
$$ 
no element $1\otimes gu$ survives to $E^{2m}_{0,*}$. 
Likewise, elements $1\otimes v^l$ do not survive to $E^{2m+1}_{0,*}$ since
$$
d^{2m}(b_1\otimes v^{l-1}) = (1\otimes v^{l-1})d^{2m}(b_1\otimes 1) = (1\otimes v^{l-1})(1\otimes v) = 1\otimes v^l.
$$
So we see that no element in $E^{2}_{0,*}$ survives to $E^{\infty}_{0,*}$.

Now consider those elements of the form $b_1\otimes gu$. 
Since $1\otimes gvu=1\otimes guv$ does not survive to $E^{2m}_{0,*}$, we have
$$
d^{2m}(b_1\otimes gu) = 0.
$$

Finally consider elements of the form $z\otimes g$.
Note $d^{i}(z\otimes g)=0$ for $i<2m-1$ for degree reasons.
Let the integers $c_1,\ldots, c_k$ modulo $p$ be such that $b_i^*a_1^*=c_i z^*$. 
As mentioned before, $c_1=c$ is nonzero. 
Dualizing to the mod-$p$ cohomology spectral sequence associated with our homotopy fibration, 
for each $i$,
$$
d_{2m-1}(b_i^*\otimes u^*)=d_{2m-1}(b_i^*\otimes 1)(1\otimes u^*)+(-1)^{|b_i|}(b_i^*\otimes 1)d_{2m-1}(1\otimes u^*)
=(b_i^*\otimes 1)(a_1^*\otimes 1)= c_i(z^*\otimes 1).
$$
Dualizing in the other direction, we have (denoted by $\zeta$)
$$
\zeta=d^{2m-1}(z\otimes 1)=\csum{i=1}{k}{c_i(b_i\otimes u)},
$$
so for each $g\in H_*(\Omega V;\zmodp)$
$$
d^{2m-1}(z\otimes g)=\csum{i=1}{k}{c_i(b_i\otimes gu)} = (1\otimes g)\zeta.
$$
Therefore since $c_1\neq 0$, $b_1\otimes gu$ is identified with a linear combination of 
elements $b_i\otimes gu$ for $i>1$ in $E^{2m}_{2m,*}$.

All the differentials in this spectral sequence have now been computed.
Summarizing all the above information, we see that
no element in $E^{2}_{0,*}$ survives to $E^{\infty}_{0,*}$, 
elements of the form $a_i\otimes g$ for $i>1$ generate the kernel of 
$\seqm{E^{2m-1}_{2m-1,*}}{d^{2m-1}}{E^{2m-1}_{0,*}}$,
and elements of the form $b_i\otimes g$ for $i>1$ and $b_1\otimes gu$ generate the kernel of 
$\seqm{E^{2m}_{2m,*}}{d^{2m}}{E^{2m}_{0,*}}$.
But $b_1\otimes gu$ is identified with a linear combination of 
$b_i\otimes gu$ for $i>1$ in $E^{2m}_{2m,*}$,
so $b_i\otimes g$ for $i>1$ generate this kernel.
Since no nonzero linear combination of elements of the form
$a_i\otimes g$ and $b_i\otimes g$ for $i>1$ is in the image of a differential,
one sees $E^{\infty}_{*,*}$ is generated by a basis of elements
$a_i\otimes g$ and $b_i\otimes g$ for $i>1$ and monomials $g\in H_*(\Omega V;\zmodp)$.
The result follows.    

\end{proof}

Let
\begin{equation}
\label{EWedge}
J=\cvee{i=2}{k_1}\M{2m}{r_i}\vee\cvee{i=k_1+1}{k}{(S^{2m-1}\vee S^{2m})}.
\end{equation}
This is the $(4m-2)$-skeleton $\bar W$ of $W$ with $\M{2m}{r_1}$ quotiented out. 
Our previous work amounts to the following:
\medskip
\begin{theorem}
\label{T2}\mbox{}
\begin{romanlist}
\item If $k_1\geq 1$ there is a homotopy equivalence
$$\Omega W\simeq\MS{2m-1}{r_1}\times\Omega S^{4m-1}\times\Omega\bracket{J\vee (J\wedge (\MS{2m-1}{r_1}\times\Omega S^{4m-1}))},$$
where the right-hand space is taken to be a weak product.
\item Similarly, if $k_1=0$ there is a homotopy equivalence
$$\Omega W\simeq\Omega S^{2m-1}\times\Omega S^{2m}\times\Omega\bracket{J\vee (J\wedge (\Omega S^{2m-1}\times\Omega S^{2m}))}.$$
\end{romanlist}
\end{theorem}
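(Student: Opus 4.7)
The plan is to decompose $\Omega W$ by combining three ingredients: the right homotopy inverse of $\Omega q$ from Corollary~\ref{C3}, the explicit product splittings of $\Omega V$ from Theorem~\ref{T1} or Theorem~\ref{T1b}, and a homotopy equivalence $F\simeq J\vee(J\wedge\Omega V)$, where $F$ denotes the homotopy fiber of $q\colon W\to V$ from~(\ref{EPrincipalFib}). Looping the fibration sequence~(\ref{EPrincipalFib}) gives $\Omega F\to\Omega W\xrightarrow{\Omega q}\Omega V$, and the section of $\Omega q$ provided by Corollary~\ref{C3} (together with the looped fiber inclusion $F\to W$) yields, by multiplication in $\Omega W$, a homotopy equivalence $\Omega F\times\Omega V\simeq\Omega W$.

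The heart of the argument is to show $F\simeq J\vee(J\wedge\Omega V)$. I would construct a map
$$\phi\colon J\vee(J\wedge\Omega V)\longrightarrow F$$
and verify that $\phi_{*}$ is an isomorphism on mod-$p$ homology; since both source and target are simply connected $p$-local CW complexes of finite type, this forces $\phi$ to be a homotopy equivalence. The restriction of $\phi$ to $J$ is the lift $e\colon J\to F$ of the inclusion $J\hookrightarrow W$; such a lift exists because the composite $J\hookrightarrow\bar W\hookrightarrow W\xrightarrow{q} V$ is null homotopic, by the very construction of $q$ in this section (the map $q$ collapses everything in $\bar W$ outside the summand chosen to build $V$). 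The restriction to $J\wedge\Omega V$ is built from $e$ and the principal action $\mu\colon\Omega V\times F\to F$ of diagram~(\ref{DAction}), exploiting the fact that $J$ is a wedge of Moore spaces and spheres, hence a suspension and thus a co-$H$-space, so that the half-smash splits as $J\ltimes\Omega V\simeq J\vee(J\wedge\Omega V)$. The verification that $\phi_{*}$ is an isomorphism reduces to a direct comparison against the free left $H_{*}(\Omega V;\zmodp)$-module description of $H_{*}(F;\zmodp)$ supplied by Lemma~\ref{L4}, together with the analogous tensor-product description of the mod-$p$ homology of $J\vee(J\wedge\Omega V)$.

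Combining these steps, $\Omega W\simeq\Omega F\times\Omega V\simeq\Omega(J\vee(J\wedge\Omega V))\times\Omega V$. Substituting the product decompositions of $\Omega V$ from Theorem~\ref{T1} in case $k_{1}\geq 1$ and from Theorem~\ref{T1b} in case $k_{1}=0$ yields parts $(i)$ and $(ii)$ respectively. The main obstacle will be the construction of $\phi$ on the $J\wedge\Omega V$-summand: the naive composite $\Omega V\times J\xrightarrow{1\times e}\Omega V\times F\xrightarrow{\mu}F$ restricts to the generally nontrivial maps $\delta$ and $e$ on the two axes, so it does not factor through any of the smash quotients. The work is to use the co-$H$ structure on $J$ to repackage this product into a wedge compatibly with the principal action, and to check carefully that the resulting wedge of maps realises, cell by cell and module generator by module generator, the free-module isomorphism of Lemma~\ref{L4}.
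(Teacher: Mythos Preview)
Your overall architecture matches the paper's proof exactly: split the looped fibration using Corollary~\ref{C3}, identify $F$ with $J\vee(J\wedge\Omega V)$ via Lemma~\ref{L4}, and then substitute the decomposition of $\Omega V$ from Theorem~\ref{T1} or~\ref{T1b}. The one genuine gap is the step you flag as ``the main obstacle''.

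You write that the composite $\Omega V\times J\xrightarrow{1\times e}\Omega V\times F\xrightarrow{\mu}F$ restricts to the ``generally nontrivial'' map $\delta$ on $\Omega V\times\ast$, and propose to work around this using the co-$H$ structure on $J$. But $\delta$ is \emph{not} nontrivial here: the very section you invoked at the outset forces it to be null homotopic. In the fibre sequence $\Omega W\xrightarrow{\Omega q}\Omega V\xrightarrow{\delta}F$ one has $\delta\circ\Omega q\simeq\ast$; composing with a right homotopy inverse $s$ of $\Omega q$ gives $\delta\simeq\delta\circ\Omega q\circ s\simeq\ast$. This is the key observation the paper uses, and it is what you are missing.

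Once $\delta\simeq\ast$, the obstacle evaporates: the composite $\mu\circ(1\times e)$ is null on $\Omega V\times\ast$ and therefore factors through the half-smash $\Omega V\ltimes J$. The co-$H$ structure on $J$ enters only afterwards, in the standard splitting $\Omega V\ltimes J\simeq J\vee(J\wedge\Omega V)$ for $J$ a suspension. Checking that the resulting map $\bar\lambda\colon\Omega V\ltimes J\to F$ is a mod-$p$ homology isomorphism is then exactly the comparison with Lemma~\ref{L4} you describe. Your proposed workaround via the co-$H$ structure is unnecessary, and as stated it is unclear how it would produce a map out of $J\wedge\Omega V$ at all, since the problematic axis is $\Omega V$, not $J$.
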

\begin{proof}[Proof of part $(i)$]
By Corollary~\ref{C3} \seqm{\Omega W}{\Omega q}{\Omega V} has a right homotopy inverse, 
so the homotopy fibration sequence \seqmm{\Omega F}{}{\Omega W}{\Omega q}{\Omega V} is split.
Therefore
$$
\Omega W\simeq\Omega V\times\Omega F\simeq\MS{2m-1}{r_1}\times \Omega S^{4m-1}\times\Omega F,
$$
where $\Omega V\simeq\MS{2m-1}{r_1}\times\Omega S^{4m-1}$ by Theorem~\ref{T1}. 

Let \seqm{\Omega V}{s}{\Omega W} be a right homotopy inverse of $\Omega q$, and
\seqm{\Omega V}{\delta}{F} be the connecting map in the homotopy fibration sequence~(\ref{EPrincipalFib}). 
Since $\delta\circ\Omega q$ is null homotopic, we have $\delta\simeq\delta\circ\Omega q\circ s\simeq *$, 
so $\delta$ is null homotopic as well. Now by Lemma~\ref{L4} the $2m$-skeleton of $F$ is 
the wedge sum $J$ in equation~(\ref{EWedge}).

Define the composite
$$
\lambda:\seqmm{\Omega V\times J}{\mathbbm{1}\times j}{\Omega V\times F}{\mu}{F},
$$
where $j$ is the inclusion of the $2m$-skeleton. Notice the composite 
$$
\seqmm{\Omega V\times *}{\mathbbm{1}\times *}{\Omega V\times J}{\lambda}{F}
$$
is null homotopic, as it is homotopic to $\delta\simeq *$ by diagram~(\ref{DAction}). 
Therefore one obtains an extension $\bar\lambda$ of $\lambda$ in the following homotopy commutative diagram
\[\diagram
\Omega V\times *\rto^-{\mathbbm{1}\times *}\ar@{.>}[dr]^{\delta\simeq *}
& \Omega V\times J\rto^{}\dto^{\lambda}
& \Omega V\ltimes J\ar@{.>}[dl]^{\bar\lambda}\\
& F
\enddiagram\]
where the \emph{half-smash product} $\Omega V\ltimes J$ is by definiton the 
cofiber of the inclusion $\Omega V\times*\subset \Omega V\times J$. By Lemma~\ref{L4} 
$$
H_*(F;\zmodp)\cong \bar H_*(J;\zmodp)\otimes H_*(\Omega V;\zmodp) \cong H_*(\Omega V\ltimes J;\zmodp).
$$
Observe that $\lambda$ restricts to an isomorphism of the submodule 
$$
\bar H_*(J;\zmodp)\otimes H_*(\Omega V;\zmodp)\subseteq H_*(\Omega V\times J;\zmodp)
$$ 
onto $H_*(F;\zmodp)$, so $\bar\lambda$ induces an isomorphism on mod-$p$ homology. Therefore
$$
F\simeq \Omega V\ltimes J.
$$ 
Now applying the well known general splitting of half-smash products
$$
B\ltimes (\Sigma A)=(\Sigma A)\rtimes B\simeq (\Sigma A)\vee(\Sigma A\wedge B),
$$
one obtains
\begin{align*}
F & \simeq \Omega V\ltimes J\\
& \simeq J\vee (J\wedge \Omega V)\\
& \simeq J\vee (J\wedge(\MS{2m-1}{r}\times\Omega S^{4m-1})),
\end{align*}
and we are done
\end{proof}
\begin{proof}[Proof of part $(ii)$]
The proof is identical to that of part $(i)$, 
except with Theorem~\ref{T1b} used in place of Theorem~\ref{T1}.
\end{proof}

\begin{proof}[Proof of Theorem~\ref{Main} and Theorem~\ref{Main0} part $(i)$]
For each of the homotopy equivalences in Theorem~\ref{T2}
the homotopy type of the right-hand weak product is uniquely determined by the integers $k$ and $k_1$,
and the integers $r_1,\ldots,r_{k_1}$. The ordering is arbitrary, 
but we selected it so that $r_1=\max\{r_1,\ldots,r_{k_1}\}$ when $k_1>0$.
As is clearly seen in Equation~(\ref{ESplit}), 
these integers correspond uniquely to the homotopy type of $\bar W$.
Therefore the homotopy types of the right-hand weak products in Theorem~\ref{T2}
are uniquely determined by the homotopy type of $\bar W$.

It is clear that any two $[W],[W']\in\mathcal{T}^{p}_{k,2m}$ satisfy 
conditions $(1)$ and $(2)$ in Theorem~\ref{Main0} if and only if $\bar W\simeq \bar W'$.
The result follows by application of Theorem~\ref{T2}. 
\end{proof}

\section{The Integral Case}
\label{Integral}

In this section spaces are not assumed to be localized. 
When we say that the localization of map or space at a prime $p$ \emph{is} another map or space,
we will mean that they are the same at least up to homotopy equivalence. 

Let $\mathcal{P}=\paren{p_1,p_2,\ldots}$ be the set of all prime numbers,
and $M$ be a manifold as in part $(ii)$ of Theorem~\ref{Main0}.
The uniqueness up to homotopy type of Moore spaces 
implies the $(4m-2)$-skeleton $\bar M$ of $M$ splits as a finite wedge of Moore spaces
$$
\bar M\simeq\cvee{i}{}{\bracket{\cvee{j}{}{P^{2m}(q_i^{r_{i,j}})}}},
$$
where $\mathcal{Q}=\paren{q_1,q_2,\ldots}\subset\paren{p_1,p_2,\ldots}$ is some subset of odd prime numbers.
We may as well assume the homotopy type of $M$ (which might now longer be a manifold) 
has been selected so the $\bar M$ is homeomorphic to the above wedge of Moore spaces.

When localized at some $p$, mod-$q$ Moore spaces are contractible whenever $q$ prime to $p$.
Then $\bar M_{(q_i)}$ is homotopy equivalent to a wedge of the mod-$q_i$ Moore spaces in the above splitting.
On the other hand, $\bar M_{(p)}$ is contractible when localized at any $p\in\mathcal{P}-\mathcal{Q}$, 
which implies $M_{(p)}\simeq S^{4m-1}_{(p)}$.

We will need to lift some of the $p$-local maps constructed in the previous section
to ones that are integral.
This is perhaps best done by following through the same constructions,
all the while keeping in mind we are no longer localized.  
First, using the recipe for the construction of the map \seqm{W}{q}{V} in~(\ref{EMainMap}),
we can collapse or fold the Moore spaces in the $(4m-2)$-skeleton $\bar M$ to obtain a map
$$
\rho\colon\seqm{M}{}{N}
$$  
whose localization at each prime $p\in\mathcal{Q}$ is the map $q$, 
and whose localization at each $p\in\mathcal{P}-\mathcal{Q}$ is a homotopy equivalence
\seqm{M\simeq S^{4m-1}_{(p)}}{\simeq}{S^{4m-1}_{(p)}\simeq N}.
Then the $(4m-2)$-skeleton of $N$ is a finite wedge
$$
\bar N\simeq\cvee{i}{}{P^{2m}(q_{i}^{s_{i}})},
$$
where $s_i=\max_j\paren{r_{i,j}}$.

Let 
$Q=\cprod{i}{}{S^{2m-1}\{q_{i}^{s_{i}}\}}$
and 
$Q'=\cprod{i}{}{\Omega P^{2m}(q_{i}^{s_{i}})}$.
The $p$-local map $h$ in Section~\ref{RankOne}
can be lifted to an integral map $h'$ that is a choice
of lift in the digram of homotopy fibrations sequence 
\[\diagram
\MS{2m-1}{r}\rto^{}\dto^{h'} & S^{2m-1}\rto^{}\dto^{} & S^{2m-1}\dto^{\subset}\\
\Omega P^{2m}(p^{r})\rto^{} & \ast\rto^{} & P^{2m}(p^{r}),
\enddiagram\] 
for this is how $h$ was constructed in~\cite{CMN}.
Taking products of these maps defines the obvious map \seqm{\Omega Q}{}{\Omega Q'}.
Using the Hilton-Milnor homotopy decomposition of $\Omega\bar N$
gives a map \seqm{Q'}{}{\Omega\bar N} that is a left homotopy inverse of the 
canonical looped inclusion \seqm{\Omega\bar N}{}{Q'}. Now we define the composite
$$
\eta\colon\seqmmm{Q}{}{Q'}{}{\Omega\bar N}{}{\Omega M},
$$
where the last map is the looped inclusion.

Next, construct a map 
$$
\tilde g\colon\seqm{\Omega S^{4m-1}}{}{\Omega M},
$$
which is the integral analogue of the map $\tilde f$ in the proof of Corollary~\ref{C3}.
The construction begins along the same lines as that of the map $\bar f$ in Lemma~\ref{L3}.
First take the attaching map \seqm{S^{4m-2}}{\xi}{\bar M} for the $(4m-1)$-cell of $M$.
By localizing $\bar M$ at each prime $p$ and applying Proposition~\ref{Lexp},
$[\xi]$ must be of order $s=\prod_i {q_i}^{s_i}$, so $\xi$ factors through a map 
\seqm{P^{4m-1}(s)}{\bar\xi}{\bar M}.
We then let \seqm{P^{4m-2}(s)}{\bar\xi'}{\Omega\bar M} be the adjoint of $\bar\xi$.
The restriction of the map 
$$
g\colon\seqmm{P^{4m-2}(s)}{\bar\xi'}{\Omega\bar M}{}{\Omega M}
$$
to the bottom sphere $S^{4m-3}$ is null homotopic since it is the adjoint
of \seqmm{S^{4m-2}}{\xi}{\bar M}{}{M}, which itself is null homotopic.
Then $g$ factors through a map 
$$
\bar g\colon\seqm{S^{4m-2}}{}{\Omega M},
$$
By the universal property of the James construction for $\Omega S^{4m-1}$,
$\bar g$ extends to the map $\tilde g$ detailed above.

The following is well known (see~\cite{MNT1} for example):

\begin{proposition}
\label{LPequiv}
A map \seqm{X}{}{Y} of finite type $1$-connected $CW$-complexes is a homotopy equivalence 
if and only if it induces a homotopy equivalence localized at each prime $p$.~$\qqed$
\end{proposition}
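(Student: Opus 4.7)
\textbf{Proof plan for Proposition~\ref{LPequiv}.}

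The forward direction is immediate: if $f\colon X\to Y$ is a homotopy equivalence then $p$-localization is a functor, so $f_{(p)}\colon X_{(p)}\to Y_{(p)}$ is also a homotopy equivalence for every prime $p$. So the real content is the converse, and the plan is to reduce it to an algebraic statement about finitely generated abelian groups via the homology Whitehead theorem.

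First I would invoke the Whitehead theorem for simply connected CW complexes: a map $f\colon X\to Y$ between finite-type $1$-connected CW complexes is a homotopy equivalence if and only if $f_*\colon H_n(X;\mathbb{Z})\to H_n(Y;\mathbb{Z})$ is an isomorphism for every $n$. Hence it suffices to show that, under the standing assumption that each $f_{(p)}$ is a homotopy equivalence, $f_*$ is an integral homology isomorphism in every degree. Because $X$ and $Y$ are of finite type, each $H_n(X;\mathbb{Z})$ and $H_n(Y;\mathbb{Z})$ is a finitely generated abelian group.

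The next step is the compatibility of singular homology with $p$-localization. For any $1$-connected finite type CW complex $Z$ the natural map $\ell\colon Z\to Z_{(p)}$ induces an isomorphism $H_n(Z;\mathbb{Z})\otimes\mathbb{Z}_{(p)}\cong H_n(Z_{(p)};\mathbb{Z})$ for each $n$; this is a basic property of Sullivan--Bousfield--Kan localization (or, equivalently, follows from the universal coefficient theorem once one observes that $\mathbb{Z}_{(p)}$ is flat over $\mathbb{Z}$). Applying naturality, the hypothesis that $f_{(p)}$ is a homotopy equivalence gives that $f_*\otimes\mathbb{Z}_{(p)}\colon H_n(X;\mathbb{Z})\otimes\mathbb{Z}_{(p)}\to H_n(Y;\mathbb{Z})\otimes\mathbb{Z}_{(p)}$ is an isomorphism for every $n$ and every prime $p$.

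Finally I would finish with the elementary algebraic fact: a homomorphism $\varphi\colon A\to B$ of finitely generated abelian groups is an isomorphism if and only if $\varphi\otimes\mathbb{Z}_{(p)}$ is an isomorphism for every prime $p$. To prove this one inspects $\ker\varphi$ and $\mathrm{coker}\,\varphi$, which are both finitely generated; since $\mathbb{Z}_{(p)}$ is $\mathbb{Z}$-flat, $\ker(\varphi\otimes\mathbb{Z}_{(p)})=\ker(\varphi)\otimes\mathbb{Z}_{(p)}$ and similarly for cokernels, so it suffices to show that a finitely generated abelian group $C$ with $C\otimes\mathbb{Z}_{(p)}=0$ for every $p$ must vanish. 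Writing $C\cong\mathbb{Z}^r\oplus\bigoplus_i\mathbb{Z}/n_i$ one sees that the free rank forces $r=0$, and for each cyclic torsion summand $\mathbb{Z}/n_i\otimes\mathbb{Z}_{(p)}$ is the $p$-primary part, which can vanish for every $p$ only if $n_i=1$. Applying this to each degree $n$ gives the required integral isomorphisms and, combined with the Whitehead theorem, completes the proof. There is no substantive obstacle here beyond assembling these standard ingredients; the only point at which one must be slightly careful is the finite type assumption, which is precisely what guarantees the algebraic lemma applies degree by degree.
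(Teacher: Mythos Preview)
Your proof is correct and is the standard argument. The paper does not actually prove this proposition; it merely states it as well known and refers the reader to~\cite{MNT1}, so your argument supplies exactly the details the paper omits.
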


Using this, and our preceeding work localized at odd primes $p$,
we prove the integral classification in part $(ii)$ of Theorem~\ref{Main0}.

\begin{proof}[Proof of Theorem~\ref{Main0} part $(ii)$]

Combine our maps to obtain the composite
$$
\psi\colon\seqmmm{Q\times\Omega S^{4m-1}}{\eta\times\tilde g}{\Omega M\times \Omega M}{mult.}{\Omega M}{\Omega\rho}{\Omega N}.
$$ 
Notice a $S^{2m}\{q^r\}$ factor in $Q$ has trivial mod-$p$ homology when $q$ is prime to $p$,
and thus is contractible when localized at $p$. 
The maps $\eta$ and $\tilde g$ localize at $p\in\mathcal{Q}$ 
to the corresponding maps $\phi$ and $\tilde f$ in composite the composite (\ref{EHomotopyEquiv}),   
so $\psi_{(p)}$ is the composite (\ref{EHomotopyEquiv}).
On the other hand, $\psi_{(p)}$ reduces to a homotopy equivalence \seqm{\Omega S^{4m-1}_{(p)}}{\simeq}{\Omega N_{(p)}}    
when localized at $p\in\mathcal{P}-\mathcal{Q}$.
Now applying Lemma~\ref{LPequiv}, $\psi$ is itself a homotopy equivalence. 

Let $G$ be the homotopy fiber of \seqm{M}{\rho}{N}. 
The homotopy fibration sequence 
$$
\seqmm{\Omega G}{}{\Omega M}{\Omega\rho}{\Omega N} 
$$
therefore has a homotopy cross-section given by the homotopy equivalence $\psi$,
and as such there are homotopy equivalences  
$$
\Omega G\times Q\times\Omega S^{4m-1}\simeq \Omega G\times \Omega N \simeq \Omega M.
$$

The homotopy type of $Q$ clearly depends only on the homotopy type of the $(4m-2)$-skeleton $\bar M$. 
To complete the proof, we need to show that the homotopy type of $\Omega G$ 
also depends only on that of $\bar M$. 

Let $I$ be the quotient $\bar M / \bar N$. Localized at $p\in\mathcal{Q}$, 
$I_{(p)}$ is homotopy equivalent to the wedge $J$ in Theorem~\ref{T2}.
The localization $G_{(p)}$ is contractible when $p\in\mathcal{P}-\mathcal{Q}$, 
since $\rho$ is a homotopy equivalence in this case. 
When $p\in\mathcal{Q}$, $G_{(p)}$ is the homotopy fiber $F$ in Theorem~\ref{T2}, 
and we have shown that the $2m$-skeleton of $F$ is $J\simeq I_{(p)}$. 
Thus $I$ is the $2m$-skeleton of $G$.
We can now carry forward with a construction similar to the one in the proof of Theorem~\ref{T2},
and produce a map \seqm{\Omega N\ltimes I}{\bar\gamma}{G} whose localization $\bar\gamma_{(p)}$ at $p\in\mathcal{Q}$  
is the homotopy equivalence \seqm{\Omega V\ltimes J}{\bar\lambda}{F} in the proof thereof. 
Localized at primes $p\in\mathcal{Q}$, $\bar\gamma_{(p)}$ is also a homotopy equivalence,
since here both $G_{(p)}$ and $(\Omega N\ltimes I)_{(p)}\simeq I_{(p)}\vee I_{(p)}\wedge \Omega N_{(p)}$
are contractible. Lemma~\ref{LPequiv} now implies
$$
G\simeq \Omega V\ltimes I.
$$
The homotopy type of $\bar N$ is depends only on the homotopy type of $\bar M$, 
and so the same applies for $I$, and consequently for $G$ and $\Omega G$ as well.

\end{proof}

\bibliographystyle{amsplain}
\bibliography{mybibliography}

\end{document}